\newtheorem{theorem}{Theorem}[section]
\newtheorem{lemma}[theorem]{Lemma}
\theoremstyle{definition}
\theoremstyle{remark}
\renewcommand{\P}{\mathcal P}
\newcommand{\G}{\mathcal G}
\newcommand{\Z}{\mathbb Z}
\DeclareMathOperator\des{des}
\DeclareMathOperator\maj{maj}
\newcommand{\bfc}{\mathbf c}
\newcommand{\bfd}{\mathbf d}
\newcommand{\bfe}{\mathbf e}
\newcommand{\bff}{\mathbf f}
\newcommand{\qbin}[2]{\begin{bmatrix}{#1}\\ {#2}\end{bmatrix}_q}
\newcommand{\al}{\alpha}
\newcommand{\be}{\beta}
\newcommand{\ga}{\gamma}
\newcommand{\de}{\delta}
\DeclareMathOperator\cro{\chi}
\newcommand\start{circle(.07)}
\newcommand\startud{circle(.07)}
\newcommand\E{-- ++(1,0) \start}
\newcommand\N{-- ++(0,1) \start}
\newcommand\dn{-- ++(1,-1) \startud}
\newcommand\up{-- ++(1,1) \startud}
\newcommand{\si}{\sigma}
\newcommand{\ta}{\tau}
\newcommand{\ci}{\circ} 
\newcommand{\bu}{\bullet}
\newcommand{\su}[1]{\lVert#1\rVert}
\newcommand{\uw}{\uparrow}
\newcommand{\dw}{\downarrow}
\newcommand{\swap}{\varsigma}
\newcommand{\alt}{<_{\mathrm{alt}}}
\newenvironment{list1}{
  \begin{list}{}{%
      \setlength{\itemsep}{2mm}
      \setlength{\parsep}{1mm} \setlength{\parskip}{1mm}
      \setlength{\topsep}{0mm} \setlength{\partopsep}{0in}
      \setlength{\leftmargin}{0in}}}{\end{list}}
\newcommand\pathsP[5]{\P_{#1\to #2,#3\to #4}^{\ge #5}}
\newcommand\markx[2]{
\draw (#1,.2)--(#1,-.2) node[below] {$#1$};
\draw (.2,#2)--(-.2,#2) node[left] {$#2$};
\draw[dotted] (#1,.3)--(#1,#2)--(.3,#2);
}
\newcommand\crossing[2]{\draw[thick] (#1,#2) circle (.2);}
\newcommand\crossingc[3]{\draw[thick,#3] (#1,#2) circle (.2);}
\newcommand\crossingdot[2]{\draw[thick,dotted] (#1,#2) circle (.2);}
\newcommand\diamant[2]{\draw[teal] (#1-.25,#2)--(#1,#2+.25)--(#1+.25,#2)--(#1,#2-.25)--(#1-.25,#2);}
\newcommand\movepeakvalley[2]{
\diamant{#1}{#2} 
\diamant{#1}{-#2} 
\ifthenelse{#2=0}{}{\draw[teal,dotted,thick,->,shorten <=4pt,shorten >=4pt] (#1,#2)--(#1,-#2);}
}
\newcommand\tra[2]{\genfrac\{\}{0pt}{1}{#1}{#2}}
\newcommand\tr[2]{\genfrac{}{}{0pt}{1}{#1}{#2}}
\newcommand\ptra[4]{\genfrac{\{}{\vert}{0pt}{1}{#1}{#2}\genfrac{.}{\}}{0pt}{1}{#3}{#4}}
\newcommand\ptr[4]{\genfrac{.}{\vert}{0pt}{1}{#1}{#2}\genfrac{}{}{0pt}{1}{#3}{#4}}
\newcommand\cd{\tr{\bfc}{\bfd}}
\newcommand\ef{\tr{\bfe}{\bff}}
\newcommand\cdef{\ptr{\bfc}{\bfd}{\bfe}{\bff}}
\newcommand\Tr{T}
\newcommand{\lessup}{{\rotatebox[origin=c]{45}{$<$}}}
\newcommand{\lessdown}{{\rotatebox[origin=c]{-45}{$<$}}}
\newcommand{\uwcrossing}[1]{\draw (#1+.5,.8) circle (.25);}
\newcommand{\uwcrossingdot}[1]{\draw[dotted] (#1+.5,.8) circle (.25);}
\newcommand{\dwcrossingdot}[1]{\draw[dotted] (#1,0) circle (.25);}
\newcommand{\uwcrossingc}[2]{\draw[#2] (#1+.5,.8) circle (.25);}
\newcommand{\dwcrossingc}[2]{\draw[#2] (#1,0) circle (.25);}
\newcommand{\uwcrossingfill}[1]{\draw[fill=yellow] (#1+.5,.8) circle (.25);}
\newcommand{\uwcrossingfillc}[2]{\draw[#2,fill=yellow] (#1+.5,.8) circle (.25);}
\newcommand{\dwcrossingfillc}[2]{\draw[#2,fill=yellow] (#1,0) circle (.25);}
\newcommand\drawarray[2]{
\foreach \ai [count=\i] in #1
	{\draw (\i-.5,.8) node {$\ai$};}
\foreach \ai [count=\i] in #2
	{\draw (\i-1,0) node {$\ai$};}
}
\newcommand{\ineq}[2]{
\foreach \i in {2,...,#1} {\draw (\i-1,.8) node {$<$};}
\draw (#1,0.8) node {$\le$};
\draw (.5,0) node {$\le$};
\foreach \i in {2,...,#2} {\draw (\i-.5,0) node {$<$};}
}
\newcommand{\ineqx}[2]{
\foreach \i in {1,...,#1} {\draw (\i,.8) node {$<$};}
\draw (.5,0) node {$\le$};
\foreach \i in {3,...,#2} {\draw (\i-1.5,0) node {$<$};}
\draw (#2-.5,0) node {$\le$};
}
\newcommand{\ineqeq}[2]{
\foreach \i in {2,...,#1} {\draw (\i-1,.8) node {$<$};}
\draw (#1,0.8) node {$\le$};
\draw (.5,0) node {$=$};
\foreach \i in {2,...,#2} {\draw (\i-.5,0) node {$<$};}
}
\newcommand{\ineqxeq}[2]{
\foreach \i in {1,...,#1} {\draw (\i,.8) node {$<$};}
\draw (.5,0) node {$=$};
\foreach \i in {3,...,#2} {\draw (\i-1.5,0) node {$<$};}
\draw (#2-.5,0) node {$\le$};
}
\author{Sergi Elizalde}
\title{Counting lattice paths by crossings and major index II:\\ tracking descents via two-rowed arrays}
\date{}
\begin{document}

\maketitle

\begin{abstract}
We present refined enumeration formulas for lattice paths in $\Z^2$ with two kinds of steps, by keeping track of the number of descents (i.e., turns in a given direction), the major index (i.e., the sum of the positions of the descents), and the number of crossings.
One formula considers crossings between a path and a fixed line; the other considers crossings between two paths. 
Building on the first paper of the series, which used lattice path bijections to give the enumeration with respect to major index and crossings, we obtain a refinement that keeps track of the number of descents. The proof is based on new bijections which rely on certain two-rowed arrays that were introduced by Krattenthaler.
\end{abstract}

\section{Introduction}\label{sec:intro}

\subsection{Background}

Lattice paths in the plane with two kinds of steps have played an important role in combinatorics and mathematical statistics for decades~\cite{Moh,Krat}. The statistic giving the number of times that a path crosses a fixed line has been studied at least since the sixties~\cite{Eng,Sen,Feller,Feller-book,KW,Spivey}, often in connection to random walks. For tuples of paths, the enumeration in the special case of non-crossing tuples, in its closely related non-intersecting variant, is given by the celebrated 
Gessel--Viennot determinant \cite{GV}, also discovered by Lindstr\"om~\cite{Lin} in the context of matroid theory, and 
has applications to symmetric functions, plane partitions, tilings, and statistical physics \cite{Fisher}. 

On the other hand, a very different statistic, the sum of the positions of the turns in a given direction, has been studied in~\cite{KM, Krat-nonint,SaSa}. This statistic is called the {\em major index} because it arises naturally
when interpreting the paths as binary words, and it was introduced by MacMahon~\cite{Mac}.

In the first paper of this series~\cite{part1}, we enumerated paths with respect to the number of crossings of a line and the major index, as well as pairs of paths with respect to the number of times they cross each other and the sum of their major indices.
 The goal of the present paper is to refine the results from~\cite{part1} by another important statistic, which is related to the major index and arguably more natural: the number of turns in a given direction, or equivalently, the number of descents of the associated binary word. 
 
 The number of turns arises when studying the distribution of runs in random walks~\cite{Moh}, the coefficients of Hilbert polynomials of determinantal and Pfaffian rings~\cite{Kul}, and summations for Schur functions~\cite{Krat-nonint}. 
A thorough investigation of this parameter on lattice paths was provided by Krattenthaler~\cite{Krat-turns}. In particular,
a refinement by this statistic of the classical determinantal formula of Gessel--Viennot~\cite{GV} counting tuples of non-intersecting paths was given in \cite[Thm.\ 1]{Krat-turns0} and \cite[Thm.\ 3.6.1]{Krat-turns}. In related work, Krattenthaler and Mohanty~\cite{KM} enumerated lattice paths constrained to a strip with respect to the number of descents and the major index. 

The tools that were used in~\cite{part1} to deal with crossings and the major index consisted of bijections with a neat description in terms of lattice paths. While these bijections were suited to study the major index, unfortunately they do not behave well with respect to the number of descents, which is why the results obtained in~\cite{part1} do not include this statistic. Instead, in this paper we will construct different bijections that are not described in terms of paths, but rather in terms of two-rowed arrays.
Such arrays, which are more general than paths, have been used by Krattenthaler and Mohanty to study descents and major index on lattice paths in a strip~\cite{KM}, and by Krattenthaler to enumerate tuples of non-intersecting paths with respect to the number of turns~\cite{Krat-turns0,Krat-turns} and to the major index~\cite{Krat-nonint}.
However, to our knowledge, they have never been used while also keeping track of the number of crossings. 
While two-rowed arrays allow us to track simultaneously track multiple statistics, including the number of descents, 
the tradeoff is that they make the proofs more involved and less intuitive than those in~\cite{part1}.

Paralleling the results in~\cite{part1}, this paper solves two problems: the enumeration of single paths with respect to the number of times that they cross a fixed line, and the enumeration of pairs of paths with respect to the number of times that they cross each other, refined in both cases by the number of descents and the major index. This paper is self-contained and does not rely on any material from~\cite{part1}.

Our work is partially motivated by the simplicity of the resulting formulas in both cases.
For single paths with given endpoints, crossing a line at least a certain number of times and having a fixed number of descents, we will show that the polynomial enumerating them with respect to the major index is given by a product of two $q$-binomial coefficients and a power of $q$. For pairs of paths crossing each other, the formulas we obtain involve a product of two generating functions whose coefficients have again the same form.

The second source of motivation is that our results for paths crossing a line have applications to the refined enumeration of integer partitions according to the number of sign changes of their successive ranks (or off-diagonal ranks). These applications, which generalize results of Seo and Yee~\cite{SeoYee}, will be explored in~\cite{CES} in connection to the study of partitions with constrained ranks.

\subsection{Preliminaries}\label{sec:basic}

For points $A,B\in\Z^2$, we denote by $\P_{A\to B}$ the set of lattice paths with steps $N=(0,1)$ (north) and $E=(1,0)$ (east)
that start at $A$ and end at $B$.
Sometimes it will be convenient to consider paths with steps $U=(1,1)$ (up) and $D=(1,-1)$ (down) instead. 
For nonnegative integers $a,b$, we denote by $\G_{a,b}$ set of paths with $a$ steps $U$ and $b$ steps $D$ starting at the origin.

In both cases, encoding paths as binary words, with $0$s recording $N$ (resp.\ $U$) steps, and $1$s recording $E$ (resp.\ $D$) steps, we define a {\em descent} (also called a {\em valley}) of the path to be a vertex preceded by an $E$ and followed by an $N$ (resp.\ preceded by a $D$ and followed by a $U$). The number of descents of a path $P$ is denoted by $\des(P)$.
The major index of  $P$, denoted by $\maj(P)$, is defined to be the sum of the positions of the descents, where the position is determined by numbering the vertices along the path, starting at 0. See Figure~\ref{fig:Gstrl} for an example.
We also define a {\em peak} of the path to be a vertex preceded by an $N$ and followed by an $E$ (resp.\ preceded by a $U$ and followed by a $D$).

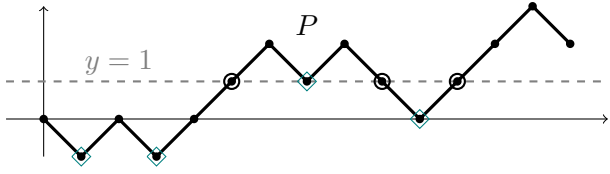
\begin{figure}[htb]
\centering
\begin{tikzpicture}[scale=0.5]
\draw[->] (-1,0)--(15,0);
\draw[->] (0,-1)--(0,3);
\draw[dashed,thick,gray] (-1,1)--(15,1);
\draw (2,1.5) node[gray] {$y=1$};
\draw[very thick,fill] (0,0) \startud \dn\up\dn\up\up\up\dn\up\dn\dn\up\up\up\dn;
\draw (7,2.5) node {$P$};
\crossing{5}{1}
\crossing{9}{1}
\crossing{11}{1}
\diamant{1}{-1}
\diamant{3}{-1}
\diamant{7}{1}
\diamant{10}{0}
\end{tikzpicture}
\caption{A path $P\in\G_{8,6}^{\ge 3,1}$ with $\maj(P)=1+3+7+10=21$. The four valleys are marked with teal diamonds, and the
three crossings of the line $y=1$ are circled in black. The middle crossing is a downward crossing, whereas the other two are upward crossings.}
\label{fig:Gstrl}
\end{figure}

The enumeration of binary words by the number of descents and the major index is implicit in work of MacMahon~\cite{Mac}. An explicit proof was given by F\"urlinger and Hofbauer~\cite{FH}. To state this result in its lattice path version, recall that the {\em $q$-binomial coefficients} are defined as
$$\qbin{m}{n}=\frac{(1-q^m)(1-q^{m-1})\cdots(1-q^{m-n+1})}{(1-q^n)(1-q^{n-1})\cdots(1-q)}
$$
if $0\le n\le m$, and as $0$ otherwise.

\begin{lemma}[\cite{Mac,FH}]\label{lem:qbin2}
For $a,b\ge0$,
$$\sum_{P\in\G_{a,b}}t^{\des(P)}q^{\maj(P)}=\sum_{n\ge0} t^n q^{n^2}\qbin{a}{n}\qbin{b}{n}.$$
Equivalently, for $x,y,u,v\in\Z$,
$$\sum_{P\in\P_{(x,y)\to (u,v)}} t^{\des(P)} q^{\maj(P)}=\sum_{n\ge0} t^n q^{n^2}\qbin{u-x}{n}\qbin{v-y}{n}.$$
\end{lemma}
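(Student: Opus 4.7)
The plan is to decompose each path in $\G_{a,b}$ into its maximal runs of $U$s and $D$s, read off both statistics from this decomposition, and observe that the resulting sum factors into two $q$-binomials.

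First, every $P\in\G_{a,b}$ with exactly $n$ descents admits a unique presentation
$$P=U^{x_0}D^{y_0}U^{x_1}D^{y_1}\cdots U^{x_n}D^{y_n},$$
where $x_0,y_n\ge0$ and $y_0,x_1,y_1,\ldots,y_{n-1},x_n\ge1$; the ``$\ge1$'' constraints ensure that each transition from $D^{y_i}$ to $U^{x_{i+1}}$ is a genuine $DU$ descent. Here $\sum x_i=a$ and $\sum y_i=b$, and the $i$-th descent sits at vertex $x_0+y_0+\cdots+x_{i-1}+y_{i-1}$, so summing these positions and swapping orders gives
$$\maj(P)=\sum_{j=0}^{n-1}(n-j)(x_j+y_j).$$

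Next I would substitute $\tilde x_i=x_i-[i\ge1]$ and $\tilde y_i=y_i-[i\le n-1]$ to turn the inequality constraints into pure non-negativity, with $\sum\tilde x_i=a-n$ and $\sum\tilde y_i=b-n$. A direct expansion yields
$$\maj(P)=n^2+\sum_{j=0}^{n-1}(n-j)\,\tilde x_j+\sum_{j=0}^{n-1}(n-j)\,\tilde y_j,$$
where the $n^2=n+2\binom{n}{2}$ is the total ``forced'' contribution of the shifts. Summing over $n$ and over all such $(\tilde x,\tilde y)$, the generating function factors cleanly as
$$\sum_{P\in\G_{a,b}}t^{\des(P)}q^{\maj(P)}=\sum_{n\ge0}t^nq^{n^2}\biggl(\sum_{\tilde x}q^{\sum_j(n-j)\tilde x_j}\biggr)\biggl(\sum_{\tilde y}q^{\sum_j(n-j)\tilde y_j}\biggr).$$

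Finally, I would identify each inner factor with a $q$-binomial coefficient: viewing $(\tilde x_0,\ldots,\tilde x_n)$ as a multiset of size $a-n$ on $\{0,\ldots,n\}$ (with $i$ appearing $\tilde x_i$ times) and setting $\mu_k=n-i_k$, where $i_1\le\cdots\le i_{a-n}$ lists the multiset in weakly increasing order, gives a bijection with partitions fitting in the $(a-n)\times n$ box whose size equals $\sum_j(n-j)\tilde x_j$; hence this sum equals $\qbin{a}{n}$, and analogously the $\tilde y$-sum equals $\qbin{b}{n}$. This proves the first identity, and the equivalent statement for $\P_{(x,y)\to(u,v)}$ then follows from the trivial bijection $(N,E)\leftrightarrow(U,D)$ which preserves both statistics, matching $a=v-y$ and $b=u-x$. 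The only place to be careful is the bookkeeping that produces the clean $n^2$ and the symmetric $(n-j)$-weights; beyond that the argument is a direct computation and there is no deep obstacle.
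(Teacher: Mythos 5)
Your proof is correct, but it takes a different route from the paper. The paper encodes a path by the coordinates of its valleys via the bijection~\eqref{bij:Pto2ra}, so that $\maj(P)=\su\bfc+\su\bfd-n(x+y)$, and then invokes Lemma~\ref{lem:sum}(i), where each strictly increasing sequence is shifted and interpreted as a partition in a box; you instead decompose the path into maximal runs $U^{x_0}D^{y_0}\cdots U^{x_n}D^{y_n}$, express $\maj$ as the weighted sum $\sum_{j}(n-j)(x_j+y_j)$ of run lengths, strip the positivity constraints to extract the $n^2$, and map the resulting weighted compositions to partitions in the $(a-n)\times n$ box. The two computations are related by a partial-sum change of variables and both bottom out at the same partitions-in-a-box interpretation of $\qbin{a}{n}$, so neither is deeper than the other; your version is self-contained and closer to the classical MacMahon/F\"urlinger--Hofbauer run-length computation, while the paper's choice of valley coordinates is what sets up the two-rowed array machinery (Lemma~\ref{lem:sum} and the encoding~\eqref{bij:Pto2ra}) that is reused throughout the rest of the paper, which is why it proves the lemma in that language. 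Your bookkeeping checks out: the forced contribution is $\binom{n+1}{2}+\binom{n}{2}=n^2$, the weights $(n-j)$ sort into partitions with at most $a-n$ parts each at most $n$, and the transfer to $\P_{(x,y)\to(u,v)}$ with $a=v-y$, $b=u-x$ is immediate (the degenerate cases $u<x$ or $v<y$ give $0=0$ by the convention on $q$-binomials); just read the list of ``interior'' run-length constraints as empty when $n=0$.
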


A self-contained proof of this lemma will be included in Section~\ref{sec:2ra}.
The rest of the paper is structured as follows. In Section~\ref{sec:main} we state our results, both for single paths crossing a line and for pairs of paths crossing each other. In Section~\ref{sec:proofs-single} we prove them in the case of single paths crossing a line, by introducing two-rowed arrays to encode paths, generalizing the notion of crossings to such arrays, and then describing certain bijections on them.
In Section~\ref{sec:proofs-pairs} we prove our results for pairs of paths crossing each other, by generalizing crossings to pairs of two-rowed arrays, and then defining bijections on such pairs.

\section{Main results}\label{sec:main}

\subsection{Paths crossing a line}

First we consider the enumeration of paths with $U$ and $D$ steps according to the number of times that they cross a fixed horizontal line. For integers $\ell,r$ with $r\ge0$,
let $\G_{a,b}^{\ge r,\ell}$ denote the set of paths in $\G_{a,b}$ that cross the line $y=\ell$ at least $r$ times.
A vertex of the path on the line $y=\ell$ is a {\em crossing} if it is either preceded and followed by a $D$ ---in which case it is called a {\em downward crossing}---, or preceded and followed by a $U$ ---called an {\em upward crossing}. See Figure~\ref{fig:Gstrl} for an example.

We will provide expressions for the polynomials
$$G_{a,b}^{\ge r,\ell}(t,q)=\sum_{P\in\G_{a,b}^{\ge r,\ell}} t^{\des(P)} q^{\maj(P)}$$
for arbitrary integers $a,b,r,\ell$ with $a,b,r\ge0$. 
Note that the polynomials for paths crossing the line $y=\ell$ {\em exactly} $r$ times can be obtained from the above simply as $G_{a,b}^{\ge r,\ell}(t,q)-G_{a,b}^{\ge r+1,\ell}(t,q)$.

An expression for $G_{a,b}^{\ge r,\ell}(1,q)$ was given in \cite[Theorems 2.1 and 2.2]{part1}. The following result refines these theorems by incorporating the statistic $\des$.

\begin{theorem}\label{thm:line:des}
Let $a,b,m\ge0$, and let $\ell\in\mathbb{Z}$.
\begin{enumerate}[I.]
\item If $0<\ell<a-b$, then
\begin{equation}\label{eq:0<l<a-b:des}
G_{a,b}^{\ge 2m+1,\ell}(t,q)=G_{a,b}^{\ge 2m,\ell}(t,q)=\sum_{n\ge0}t^n q^{n^2+m(m+\ell+1)}\qbin{a}{n-m}\qbin{b}{n+m}.
\end{equation}
\item If $0>\ell>a-b$, then
\begin{equation}\label{eq:0>l>a-b:des}
G_{a,b}^{\ge 2m+1,\ell}(t,q)=G_{a,b}^{\ge 2m,\ell}(t,q)=\sum_{n\ge0}t^n q^{n^2+m(m-\ell-1)}\qbin{a}{n+m}\qbin{b}{n-m}.
\end{equation} 
\item If $0>\ell<a-b$, then
\begin{equation}\label{eq:0>l<a-b:des}
G_{a,b}^{\ge 2m+2,\ell}(t,q)=G_{a,b}^{\ge 2m+1,\ell}(t,q)=\sum_{n\ge0}t^n q^{n^2+(m+1)(m-\ell)}\qbin{a-\ell-1}{n-m-1}\qbin{b+\ell+1}{n+m+1}.
\end{equation}
\item If $0<\ell>a-b$, then
\begin{equation}\label{eq:0<l>a-b:des}
G_{a,b}^{\ge 2m+2,\ell}(t,q)=G_{a,b}^{\ge 2m+1,\ell}(t,q)=\sum_{n\ge0}t^n q^{n^2+m(m+\ell+1)}\qbin{a-\ell-1}{n+m}\qbin{b+\ell+1}{n-m}.
\end{equation}
\item If $0=\ell<a-b$, then
\begin{align}\label{eq:0=l<a-b:des-even}
G_{a,b}^{\ge 2m,\ell}(t,q)&=\sum_{n\ge0}t^n q^{n^2+m(m+1)}\qbin{a}{n-m}\qbin{b}{n+m},\\
\label{eq:0=l<a-b:des-odd}
G_{a,b}^{\ge 2m+1,\ell}(t,q)&=\sum_{n\ge0}t^n q^{n^2+m(m+1)}\qbin{a-1}{n-m-1}\qbin{b+1}{n+m+1}.
\end{align}
\item If $0=\ell>a-b$, then
\begin{align}\label{eq:0=l>a-b:des-even}
G_{a,b}^{\ge 2m,\ell}(t,q)&=\sum_{n\ge0}t^n q^{n^2+m(m-1)}\qbin{a}{n+m}\qbin{b}{n-m},\\
\label{eq:0=l>a-b:des-odd}
G_{a,b}^{\ge 2m+1,\ell}(t,q)&=\sum_{n\ge0}t^n q^{n^2+m(m+1)}\qbin{a-1}{n+m}\qbin{b+1}{n-m}.
\end{align}
\item If $0<\ell=a-b$, then
\begin{align}\label{eq:0<l=a-b:des-even}
G_{a,b}^{\ge 2m,\ell}(t,q)&=\sum_{n\ge0}t^n q^{n^2+m(m+\ell+1)}\qbin{a}{n-m}\qbin{b}{n+m}, \\
\label{eq:0<l=a-b:des-odd}
G_{a,b}^{\ge 2m+1,\ell}(t,q)&=\sum_{n\ge0}t^n q^{n^2+m(m+\ell+1)}\qbin{a+1}{n-m}\qbin{b-1}{n+m}.
\end{align}
\item If $0>\ell=a-b$, then
\begin{align}\label{eq:0>l=a-b:des-even}
G_{a,b}^{\ge 2m,\ell}(t,q)&=\sum_{n\ge0}t^n q^{n^2+m(m-\ell-1)}\qbin{a}{n+m}\qbin{b}{n-m}, \\
\label{eq:0>l=a-b:des-odd}
G_{a,b}^{\ge 2m+1,\ell}(t,q)&=\sum_{n\ge0}t^n q^{n^2+(m+1)(m-\ell)}\qbin{a+1}{n+m+1}\qbin{b-1}{n-m-1}.
\end{align}
\item If $0=\ell=a-b$, then
\begin{align}\label{eq:0=l=a-b:des-even}
G_{a,b}^{\ge 2m,\ell}(t,q)&=\sum_{n\ge0}t^n q^{n^2+m(m+1)}\frac{1-q^{a-2m}}{1-q^a}\qbin{a}{n+m}\qbin{a}{n-m}, \\ 
\label{eq:0=l=a-b:des-odd}
G_{a,b}^{\ge 2m+1,\ell}(t,q)&=\sum_{n\ge0}t^n q^{n^2+m(m+1)}\frac{1-q^{a+2(m+1)}}{1-q^a}\qbin{a}{n+m+1}\qbin{a}{n-m-1}.
\end{align}
\end{enumerate}
\end{theorem}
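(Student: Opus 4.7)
My plan is to build on the Krattenthaler-style two-rowed array encoding that will be developed to prove Lemma~\ref{lem:qbin2} and extend it to a bijective argument tracking all three statistics at once. The first ingredient is a bijection sending a path $P \in \G_{a,b}$ with $n$ descents to a pair $(\bfc,\bfd)$ of strictly increasing $n$-tuples, with $\bfc$ taking values in $[a]$ and recording the positions (in the ordered list of $U$-steps) of the $U$'s immediately following a valley, and $\bfd$ taking values in $[b]$ and recording the positions of the $D$'s immediately preceding a valley. Under this encoding, $\maj(P)$ becomes an affine-in-$n$ function of $\sum c_i + \sum d_j$, and summing over all valid pairs gives Lemma~\ref{lem:qbin2}.

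Next I would rephrase the event ``$P$ crosses $y=\ell$ at least $r$ times'' as a combinatorial condition on $(\bfc,\bfd)$: since the entries label the $U$'s and $D$'s of $P$ in order, the height profile of $P$ is encoded by the interleaving of $\bfc$ and $\bfd$, and an upward or downward crossing at level $\ell$ becomes an alignment condition at the relevant index. With crossings thus defined on arrays, the central step is a reflection-type bijection: given an array with at least $r$ crossings, locate the $r$-th one and perform a swap/reindexing of the tail segments of $\bfc$ and $\bfd$, producing a new pair of sequences whose row lengths have shifted from $(n,n)$ to $(n-m,n+m)$ (or the mirror shift, depending on the parity of $r$ and on whether the first relevant crossing is upward or downward). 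The resulting change in $\sum c_i + \sum d_j$ should account exactly for the $q$-power appearing in front of $\qbin{a}{n\mp m}\qbin{b}{n\pm m}$ (or the reflected-endpoint variants $\qbin{a-\ell-1}{\cdot}\qbin{b+\ell+1}{\cdot}$ in cases III--IV), interpreting the image pair as a generalized, unbalanced two-rowed array whose generating function factors as a product of two $q$-binomials.

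With this machinery, I would distribute the nine cases by the signs of $\ell$ and of $\ell-(a-b)$, which together determine which type of crossing the reflection consumes and whether the parity identity $G^{\ge 2m+1}=G^{\ge 2m}$ or $G^{\ge 2m+2}=G^{\ge 2m+1}$ is the natural one; I--IV are the generic strict-inequality cases, each resolved by one clean application of the bijection, while V--VIII require small modifications to handle paths touching $y=\ell$ at their endpoints. Case IX, with $\ell=0=a-b$, demands an extra inclusion--exclusion that explains the asymmetric factor $(1-q^{a\pm 2(m+1)})/(1-q^a)$ in \eqref{eq:0=l=a-b:des-even} and \eqref{eq:0=l=a-b:des-odd}; this is the hallmark of a cycle-lemma-style quotient and suggests that in the fully degenerate case a rotational symmetry of the target arrays must be divided out, or equivalently, two overlapping applications of the bijection must be reconciled.

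The main obstacle, as I see it, is the bookkeeping around the seam at the $r$-th crossing. A geometric reflection of a segment of $P$ naively creates or destroys a valley at the seam, so the identity $\des(P')=\des(P)$ is not automatic; the two-rowed array formulation must absorb this discrepancy into the change of $\sum c_i + \sum d_j$ so that the combined shift matches exactly $m(m+\ell+1)$ or the appropriate variant. Verifying this uniformly across all nine cases, each with its own sign conventions, parity conventions, and boundary behavior, is where the bulk of the work will lie, and the doubly-degenerate case $\ell=0=a-b$ is where I expect the argument to become genuinely delicate.
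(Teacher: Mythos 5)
Your overall strategy --- encode a path by the two-rowed array of its valleys, define crossings directly on arrays, and swap tails at a crossing to produce unbalanced arrays whose generating function factors as $q^{\text{power}}\qbin{a}{n\mp m}\qbin{b}{n\pm m}$ --- is indeed the approach of the paper. However, the central mechanism as you describe it would not work: a \emph{single} swap of the tails at the $r$th crossing cannot shift the row lengths from $(n,n)$ to $(n-m,n+m)$. A swap after an upward crossing leaves both row lengths unchanged and merely exchanges the upper bounds $u\leftrightarrow v$ of the two rows, while a swap after a downward crossing changes the imbalance by exactly one unit; the shift by $m$ is only achieved by composing $r$ such swaps, one at each of the first $r$ crossings in decreasing order (alternately of the two types, as in Lemma~\ref{lem:al,be}), and one needs an auxiliary argument (Lemmas~\ref{lem:first_crossing} and~\ref{lem:even=odd}) showing that at each stage the parity of the relevant crossing, and hence which of the two swaps applies and whether it is proper, is forced by the relative positions of the endpoints. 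Relatedly, your plan to have ``the change in $\sum c_i+\sum d_j$'' produce the factor $q^{m(m+\ell+1)}$ is off: the swaps preserve both the number of entries (hence $\des$, carried by the parameter $n$) and the sum of the entries (hence $\maj$); the extra power of $q$ arises instead from enumerating the unbalanced target arrays (Lemma~\ref{lem:sum}(ii), exponent $k(k+x-y+1)$). So the seam bookkeeping you flag as the main obstacle is resolved not by absorbing a discrepancy into the sum, but by the fact that there is no discrepancy at all once the iteration is set up correctly.

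Your guess for the doubly degenerate case $0=\ell=a-b$ also misses the actual mechanism: no cycle-lemma or quotient by a rotational symmetry is involved. The paper conditions on the first step of the path (equivalently, on whether $d_1=y$ or $d_1>y$ in the array), which places each subcase under the hypotheses of the cases $0\ne\ell=a-b$ already proved; the factor $\frac{1-q^{a\pm\cdot}}{1-q^a}$ in \eqref{eq:0=l=a-b:des-even}--\eqref{eq:0=l=a-b:des-odd} then comes from adding the two contributions and simplifying with elementary $q$-binomial recurrences (plus a small subtraction to enumerate arrays with the first bottom entry forced to equal its lower bound). Similarly, the cases $\ell=0\ne a-b$ are not ``small modifications'' of the generic cases but are obtained from the cases $\ell=a-b\ne0$ by conjugating with the reflection $\nu$ that negates and reverses the arrays. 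These reductions, together with the iterated-swap mechanism above, are the substantive content missing from your outline.
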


\subsection{Pairs of paths crossing each other}\label{sec:pairs-crossing}

Next we consider the enumeration pairs of paths with respect to the number of crossings between them. For this problem it is convenient to consider paths with $N$ and $E$ steps. Let $P$ and $Q$ be two such paths, and suppose that $V_1,V_2,\dots,V_s$ (where $s\ge1$) is a maximal sequence of consecutive common vertices such that
\begin{itemize}
\item neither $V_1$ nor $V_s$ are endpoints of $P$ or $Q$;
\item for each of $P$ and $Q$, its step arriving at $V_1$ is of the same type ($N$ or $E$) as its step leaving~$V_s$.
\end{itemize}
In this case, vertex $V_s$ is called a {\em crossing} of $P$ and $Q$.
This definition differs slightly from the one used in~\cite{part1}, where the term crossing refers to the first vertex $V_1$ of the sequence. Of course, the number of crossings of $P$ and $Q$ does not depend on this convention, but defining the crossing to be $V_s$ will be more convenient in the proofs in Section~\ref{sec:proofs-pairs}.
Figure~\ref{fig:crossing} shows some examples of crossings.

\begin{figure}[htb]
\centering
\begin{tikzpicture}[scale=0.6]
    \draw[red,very thick,fill](0,1) \start\E\E\N\E;
    \draw[blue,dashed,very thick,fill](1,0) \start\N\E\N\N;
\crossing{2}{2}
    \draw[red,very thick,fill](7,0) \start\N\E\N\E\N;
    \draw[blue,dashed,very thick,fill](6,1)\start\E\E\E\N\E;
\crossing{8}{1}    
    \draw[red,very thick,fill](15,0) \start\N\E\E;
    \draw[blue,dashed,very thick,fill](14,1)\start\E\E\N;
\end{tikzpicture}
\caption{Two examples of crossings, circled in black, and a pair of paths that do not cross (right).}
\label{fig:crossing}
\end{figure}
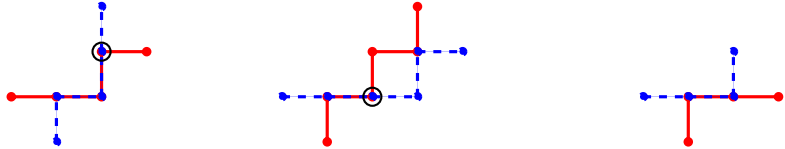

Let $\cro(P,Q)$ denote the number of crossings of  $P$ and $Q$; see Figure~\ref{fig:pair} for an example. For $A_1,A_2,B_1,B_2\in\Z^2$, $r\ge0$, and $\{\ci,\bu\}=\{1,2\}$, let
$$\pathsP{A_1}{B_\ci}{A_2}{B_\bu}{r}=\{(P,Q):P\in\P_{A_1\to B_\ci},Q\in\P_{A_2\to B_\bu},\cro(P,Q)\ge r\}.$$

\begin{figure}[htb]
\centering
\begin{tikzpicture}[scale=0.6]
\diamant{3}{2}
\diamant{6}{4}
\diamant{4}{5}
\diamant{7}{6}
\diamant{8}{7}
    \draw[red,very thick,fill](0,2) \start\E\E\E\N\N\E\E\E\N\N\N\E\E\E\E;
    \draw[blue,dashed,very thick,fill](2,0) \start\N\N\E\N\N\N\E\N\E\E\E\N\E\N;
\crossing{3}{4};\crossing{6}{6};\crossing{8}{7};	
	\draw (0,2) node[left] {$A_1$};
	\draw (2,0) node[left] {$A_2$};     
	\draw (8,8) node[right] {$B_1$};
	\draw (10,7) node[right] {$B_2$};     
	    \draw[red] (1,2.5) node {$P$} ;
    \draw[blue] (2.5,1) node {$Q$} ;
\end{tikzpicture}
\caption{A pair of paths with $\cro(P,Q)=3$, $\des(P)+\des(Q)=6$, and $\maj(P)+\maj(Q)=45$.}
\label{fig:pair}
\end{figure}

To enumerate such pairs of paths with respect to the sum of their numbers of descents (the {\em total descent number}) and the sum of their major indices (the {\em total major index}), we define the polynomials
$$H^{\ge r}_{A_1\to B_\ci,A_2\to B_\bu}(t,q)= \sum_{(P,Q)\in\pathsP{A_1}{B_\ci}{A_2}{B_\bu}{r}} t^{\des(P)+\des(Q)}q^{\maj(P)+\maj(Q)}.$$
Note that the polynomials for pairs of paths that cross each other {\em exactly} $r$ times are given by the difference $H^{\ge r}_{A_1\to B_\ci,A_2\to B_\bu}(t,q)-H^{\ge r+1}_{A_1\to B_\ci,A_2\to B_\bu}(t,q)$.

To state our formulas, let us first define the following polynomial in $t$ and $q$ that depends on the points $A_1=(x_1,y_1)$, $A_2=(x_2,y_2)$, $B_1=(u_1,v_1)$, $B_2=(u_2,v_2)$, and a parameter $k\in\Z$: 
\begin{multline}\label{eq:fr}
f_{k,A_1,A_2,B_2,B_1}(t,q)\\
=q^{k(k+x_2-x_1)}\left(\sum_{n\ge0} t^n q^{n(n+k)} \qbin{u_2-x_1}{n} \qbin{v_2-y_1}{n+k}\right)
\left(\sum_{n\ge0} t^n q^{n(n-k)} \qbin{u_1-x_2}{n} \qbin{v_1-y_2}{n-k}\right).
\end{multline}
The theorem below refines~\cite[Theorem 2.4]{part1}. 

\begin{theorem}\label{thm:pairs:des}
Let $A_1=(x_1,y_1)$, $A_2=(x_2,y_2)$, $B_1=(u_1,v_1)$ and $B_2=(u_2,v_2)$ be points in $\Z^2$ such that $A_1\prec A_2$ and $B_1\prec B_2$. Suppose additionally that 
\begin{equation}
\label{condition}
x_1+y_1=x_2+y_2.
\end{equation} 
Then, for all $m\ge0$,
\begin{align}\label{eq:switched:des}
H^{\ge 2m+1}_{A_1\to B_2,A_2\to B_1}(t,q)&=H^{\ge 2m}_{A_1\to B_2,A_2\to B_1}(t,q)=f_{2m,A_1,A_2,B_2,B_1}(t,q),\\
\label{eq:same:des}
H^{\ge 2m+2}_{A_1\to B_1,A_2\to B_2}(t,q)&=H^{\ge 2m+1}_{A_1\to B_1,A_2\to B_2}(t,q)=f_{2m+1,A_1,A_2,B_2,B_1}(t,q).
\end{align}
Let now $A=(x,y)$ and $B=(u,v)$ be points in $\Z^2$. Then, for all $r\ge0$,
\begin{align}
\label{eq:A1=A2:des}
H^{\ge r}_{A\to B_1,A\to B_2}(t,q)&=f_{r,A,A,B_2,B_1}(t,q),\\
\label{eq:B1=B2:des}
H^{\ge r}_{A_1\to B,A_2\to B}(t,q)&=f_{r,A_1,A_2,B,B}(t,q),\\
\label{eq:A1=A2,B1=B2:des}
H^{\ge r}_{A\to B,A\to B}(t,q)&=\begin{cases}f_{0,A,A,B,B}(t,q) & \text{if }r=0,\\
2\sum_{j\ge1}(-1)^{j-1}f_{r+j,A,A,B,B}(t,q) & \text{if }r\ge1.
\end{cases}
\end{align}
\end{theorem}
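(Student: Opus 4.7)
The plan is to extend the two-rowed array approach from the single-path setting (developed for Theorem~\ref{thm:line:des}) to pairs of paths, treating $(P,Q)$ as a pair of arrays and translating ``crossings of $P$ and $Q$'' into an interleaving condition on the arrays. I would begin by assigning to each path $P\in\P_{A\to B}$ its two-rowed array of descent coordinates: the row of $x$-coordinates of descent vertices together with the row of $y$-coordinates, each strictly increasing. Under this encoding $\des(P)$ is the common row length, $\maj(P)$ is the sum of the row entries up to a known offset, and Lemma~\ref{lem:qbin2} recovers $q^{n^2}\qbin{u-x}{n}\qbin{v-y}{n}$ directly. A pair $(P,Q)$ then becomes a pair of two-rowed arrays with common row lengths $n_1$ and $n_2$.

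The heart of the proof would be a bijection between $\pathsP{A_1}{B_1}{A_2}{B_2}{r}$ and a set of \emph{shifted} pairs of two-rowed arrays in which the two rows of each array differ in length by a signed offset $\pm r$, with the endpoints of the new configuration swapped or retained according to the parity of~$r$. Geometrically, this is the iterated tail-swap at the last $r$ crossings, a move that relies on the hypothesis $x_1+y_1=x_2+y_2$ to align the anti-diagonal levels of $A_1$ and $A_2$. After the swap, the $(\des,\maj)$ generating function of the shifted pair factors as the product of the two independent sums in~(\ref{eq:fr}); the row-length offsets $n+k$ and $n-k$ record the imbalance created by the $r$ swapped crossings, while the prefactor $q^{r(r+x_2-x_1)}$ records the minimum major index that these crossings force on the configuration. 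Combined with the parity argument, this yields~(\ref{eq:switched:des}) and~(\ref{eq:same:des}).

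For~(\ref{eq:A1=A2:des}) and~(\ref{eq:B1=B2:des}), the coincidence of starting or ending points erases the parity distinction, since the shared extremal vertex can serve as an honorary crossing that absorbs the parity, yielding the single formula $f_{r}$ for every~$r$. Formula~(\ref{eq:A1=A2,B1=B2:des}) then requires inclusion-exclusion: when both endpoints coincide, the bijection overcounts, with a pair of exactly $s\ge r$ crossings being reachable via the map from several shifted configurations, and the alternating sum $2\sum_{j\ge 1}(-1)^{j-1}f_{r+j,A,A,B,B}$ isolates the correct count by a Bonferroni-type cancellation, with the factor $2$ reflecting the two symmetric labelings of the paths. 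I expect the main technical obstacle to be verifying that the iterated tail-swap on two-rowed arrays preserves the interlacing conditions defining a valid array while the $(\des,\maj)$ bookkeeping factors as in~(\ref{eq:fr}); the degenerate cases with shared endpoints, where ``last crossing'' becomes ambiguous and must be handled by inclusion-exclusion, seem particularly delicate.
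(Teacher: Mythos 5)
Your overall strategy---encoding each path by the two-rowed array of its valley coordinates, iteratively swapping the tails of the two arrays at crossings to reach a pair of arrays whose row lengths are offset by $r$, and then factoring the $(\des,\maj)$ generating function to obtain $f_r$---is the same as the paper's, and for \eqref{eq:switched:des}, \eqref{eq:same:des}, and (treating the two parities of $r$ separately) \eqref{eq:B1=B2:des} it would go through essentially as you outline, with two caveats. First, the swaps must be anchored at the $r$th crossing counted \emph{from the left}, which is preserved by each swap; ``the last $r$ crossings'' is not a stable notion, since each tail-swap changes all crossings to its right, so an iteration organized that way is not well defined or invertible. Second, for \eqref{eq:A1=A2:des} the ``honorary crossing'' remark is not an argument: the shared start point does not by itself erase the parity issue, and the paper instead reflects both paths in the line $x+y=0$ (the involution $\nu$), turning the common start into a common end and reducing to the case \eqref{eq:B1=B2:des}; some such reduction, or a separate analysis of the two parities, is required.

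The genuine gap is the doubly degenerate case $A_1=A_2=A$, $B_1=B_2=B$ with $r\ge1$. The tail-swap maps are bijections, not many-to-one maps, so there is no overcount to sieve, and your proposed Bonferroni-type inclusion--exclusion has no mechanism behind it: you never identify which configurations would be multiply counted or why the multiplicities would telescope into $2\sum_{j\ge1}(-1)^{j-1}f_{r+j,A,A,B,B}$. What actually happens is that when both endpoints coincide the chain of swaps terminates in a set of pairs of arrays with offset $r$ carrying a residual condition (which array is ``on top'' at the first entry where the two arrays differ), rather than the full offset-$r$ set. Removing this condition requires an extra map $\ga_0$, defined at that first disagreement where no genuine crossing exists, together with the involution $\swap$ exchanging the two arrays, which shows the upward and downward halves carry equal weight (this, not ``two labelings of the paths,'' is the source of the factor $2$). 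These ingredients yield the two-term recursion $H^{\ge r}_{A\to B,A\to B}(t,q)+H^{\ge r+1}_{A\to B,A\to B}(t,q)=2f_{r+1,A,A,B,B}(t,q)$ for $r\ge1$, and the alternating sum in \eqref{eq:A1=A2,B1=B2:des} is simply the formal solution of this recursion. Without the $\ga_0$/$\swap$ step and the recursion, your plan has no route to that formula.
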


\section{Proofs for paths crossing a line}\label{sec:proofs-single}

In this section we prove Theorem~\ref{thm:line:des}. Before diving into the details, we remark that it would be possible to give an alternative proof by induction on the length (number of steps) of the path, by first separating each of the nine cases of the theorem into two subcases, according to whether the last step of the path is a $U$ or a $D$. For example, if $0<\ell<a-b$, the refinement to be proved by induction would state that the generating function for paths in $\G_{a,b}^{\ge r,\ell}$ that end with a $D$, where $r=2m$ or $r=2m+1$, equals
$$G_{a,b-1}^{\ge r,\ell}(t,q)=\sum_{n\ge0}t^n q^{n^2+m(m+1+\ell)}\qbin{a}{n-m}\qbin{b-1}{n+m},$$
and so the generating function for those that end with a $U$ equals
$$G_{a,b}^{\ge r,\ell}(t,q)-G_{a,b-1}^{\ge r,\ell}(t,q)=\sum_{n\ge0}t^n q^{n^2+m(m+\ell)+b-n}\qbin{a}{n-m}\qbin{b-1}{n+m-1}.$$
Then, to prove each one of these formulas, we would remove the last step of the path, and deduce them from the formulas for shorter paths that hold by the induction hypothesis. This often requires additional subcases; for example, for the above paths ending in $U$, the cases $\ell+1<a-b$ and $\ell+1=a-b$ would be considered separately.

Instead of such a tedious induction proof, we have chosen to present a proof that relies on certain two-rowed arrays that have been used by Krattenthaler and Mohanty. One advantage of our proof is that it is bijective.  Additionally, the methodology of two-rowed arrays that we introduce here will later allow us to prove Theorem~\ref{thm:pairs:des} for pairs of paths, where a potential proof by induction is much less clear.

\subsection{Two-rowed arrays}\label{sec:2ra}

Let $x,y,u,v,k\in\Z$ and $n,j\ge0$ throughout the section. We use the notation
\begin{align*}
(x,u]_j&=\{(c_1,\dots,c_j):x<c_1<c_2<\dots<c_j\le u\},\\
[y,v)_j&=\{(a_1,\dots,a_j):y\le d_1<d_2<\dots<d_j<v\},\\
(x,v)_j&=\{(c_1,\dots,c_j):x<c_1<c_2<\dots<c_j<v\},\\
[y,u]_j&=\{(a_1,\dots,a_j):y\le d_1<d_2<\dots<d_j\le u\}.
\end{align*}
We consider pairs of such sequences arranged in a particular way, which we call {\em two-rowed arrays}, following~\cite{Krat-nonint,Krat-turns0,Krat-turns,KM}. We denote by $\tra{(x,u]_{n+k}}{[y,v)_{n-k}}$, or $\tra{(x,u]}{[y,v)}_{n\pm k}$ for short, the set of arrays of the form
\arraycolsep=2pt
$$\begin{array}{cccccccccccccc}
&x&<&c_1&<&c_2&<&&\dots&&<&c_{n+k}&\le&u\\
y&\le &d_1&<&d_2&<&\dots&<&d_{n-k}&<&v&&&
\end{array},$$
with the convention that this set is empty unless $|k|\le n$. 
The two rows are interlaced from the left, starting with the leftmost element in the bottom row.
Elements in this set are denoted by $\cd$, where
$\bfc=(c_1,\dots,c_{n+k})\in(x,u]_{n+k}$ and $\bfd=(d_1,\dots,d_{n-k})\in[y,v)_{n-k}$.

Similarly, we denote by $\tra{(x,v)}{[y,u]}_{n\pm k}$ the set of arrays of the form
\arraycolsep=2pt
$$\begin{array}{cccccccccccccc}
&x&<&c_1&<&c_2&<&&\dots&&<&c_{n+k}&<&v\\
y&\le &d_1&<&d_2&<&\dots&<&d_{n-k}&\le&u&&&
\end{array},$$

The reason two-rowed arrays are useful for our problem is that elements of $\tra{(x,u]}{[y,v)}_{n\pm 0}$, which we denote simply by $\tra{(x,u]}{[y,v)}_{n}$, encode lattice paths in $\P_{(x,y)\to (u,v)}$. This is because such paths are uniquely determined by the coordinates of their valleys. There exists a path in $\P_{(x,y)\to (u,v)}$ whose valleys are at coordinates $(c_1,d_1), (c_2,d_2), \dots, (c_n,d_n)$ if and only if
$$x< c_1<c_2<\dots<c_n\le u \quad\text{and}\quad y\le d_1<d_2<\dots<d_n< v,$$
that is, $\bfc=(c_1,\dots,c_n)\in(x,u]_n$ and $\bfd=(d_1,\dots,d_n)\in[y,v)_n$. 
Thus, this encoding is a bijection 
\begin{equation}\label{bij:Pto2ra}\{P\in\P_{(x,y)\to (u,v)}:\des(P)=n\}\to\tra{(x,u]}{[y,v)}_{n}.\end{equation}
It has the property that, if $P$ is encoded by $\cd$, then
\begin{equation}\label{eq:maj2ra}\maj(P)=\sum_{i=1}^n (c_i+d_i-x-y)=\su\bfc+\su\bfd-n(x+y),\end{equation}
where $\su\bfc$ denotes the sum of the entries of $\bfc$.
Next we enumerate two-rowed arrays with respect to this statistic.

\begin{lemma}\label{lem:sum}
\begin{enumerate}[(i)]
\item We have
\begin{align*}
\sum_{\bfc\in(x,u]_j}q^{\su\bfc}&=q^{\binom{j+1}{2}+jx}\qbin{u-x}{j}, &
\sum_{\bfd\in[y,v)_j}q^{\su\bfd}&=q^{\binom{j+1}{2}+j(y-1)}\qbin{v-y}{j},\\
\sum_{\bfc\in(x,v)_j}q^{\su\bfc}&=q^{\binom{j+1}{2}+jx}\qbin{v-x-1}{j}, &
\sum_{\bfd\in[y,u]_j}q^{\su\bfd}&=q^{\binom{j+1}{2}+j(y-1)}\qbin{u-y+1}{j}.
\end{align*}
\item We have
\begin{align}
\sum_{\cd\in\tra{(x,u]}{[y,v)}_{n\pm k}} q^{\su\bfc+\su\bfd-n(x+y)}
&=q^{n^2+k(k+x-y+1)}\qbin{u-x}{n+k}\qbin{v-y}{n-k},
\label{eq:sum1}\\
\sum_{\cd\in\tra{(x,v)}{[y,u]}_{n\pm k}} q^{\su\bfc+\su\bfd-n(x+y)}
&=q^{n^2+k(k+x-y+1)}\qbin{v-x-1}{n+k}\qbin{u-y+1}{n-k}
\label{eq:sum2}
\end{align}
\end{enumerate}
\end{lemma}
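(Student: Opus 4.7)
The plan is to derive part (ii) from part (i) via a direct product, because by the definitions in Section~\ref{sec:2ra} the set $\tra{(x,u]}{[y,v)}_{n\pm k}$ is simply the Cartesian product $(x,u]_{n+k}\times[y,v)_{n-k}$: the ``interlacing'' of the two rows is a layout convention and imposes no cross-row constraint. So I would attack part (i) first and then reap part (ii) as a factorization.

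For part (i), I take as starting point the classical identity
$$\sum_{1\le a_1<\cdots<a_j\le m} q^{a_1+\cdots+a_j}=q^{\binom{j+1}{2}}\qbin{m}{j},$$
proved by the standard bijection $(a_1,\dots,a_j)\mapsto(a_1-1,a_2-2,\dots,a_j-j)$ between strict chains in $[1,m]$ and partitions inside a $j\times(m-j)$ rectangle. Each of the four identities in part (i) is then obtained by an affine change of variable: for $\bfc\in(x,u]_j$, substitute $a_i=c_i-x$ to pick up $q^{jx}$ and reduce to chains in $[1,u-x]$; for $\bfd\in[y,v)_j$, substitute $a_i=d_i-y+1$ to pick up $q^{j(y-1)}$ and reduce to chains in $[1,v-y]$; the two half-open variants $(x,v)_j$ and $[y,u]_j$ reduce analogously to chains in $[1,v-x-1]$ and $[1,u-y+1]$.

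For part (ii), the product structure gives
$$\sum_{\cd\in\tra{(x,u]}{[y,v)}_{n\pm k}} q^{\su\bfc+\su\bfd-n(x+y)}
=q^{-n(x+y)}\left(\sum_{\bfc\in(x,u]_{n+k}} q^{\su\bfc}\right)\left(\sum_{\bfd\in[y,v)_{n-k}} q^{\su\bfd}\right),$$
and after substituting from part (i) it remains to verify the algebraic identity
$$-n(x+y)+\binom{n+k+1}{2}+(n+k)x+\binom{n-k+1}{2}+(n-k)(y-1)=n^2+k(k+x-y+1).$$
Formula~\eqref{eq:sum2} follows from the same factorization, only substituting the second and fourth identities of part (i); the $q$-binomials change from $\qbin{u-x}{n+k}\qbin{v-y}{n-k}$ to $\qbin{v-x-1}{n+k}\qbin{u-y+1}{n-k}$, while the exponent calculation is literally the same because the prefactors $q^{jx}$ and $q^{j(y-1)}$ are common to both cases.

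The only real ``obstacle'' is this exponent bookkeeping, and it hinges on the collapse
$$\binom{n+k+1}{2}+\binom{n-k+1}{2}=\frac{(n+k)^2+(n-k)^2+2n}{2}=n^2+k^2+n.$$
Once this is in hand, the cross terms in $(n+k)x+(n-k)(y-1)$ collect to $nx+kx+ny-ky-n+k$, which cancels the $-n(x+y)$, leaves $k(x-y)-n+k$ from the linear part, and combines with $n^2+k^2+n$ from the binomials to give precisely $n^2+k(k+x-y+1)$. This closes the argument in both cases.
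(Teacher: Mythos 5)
Your proposal is correct and takes essentially the same route as the paper: part (i) via shifting the entries to reduce to the partition-in-a-rectangle interpretation of the $q$-binomial coefficient (the paper performs both of your steps in the single substitution $c_i'=c_i-i-x$), and part (ii) by the row-wise factorization of the array set together with exactly the same exponent simplification. The only nitpick is that for~\eqref{eq:sum2} the identities needed from part (i) are the third and fourth (those for $(x,v)_j$ and $[y,u]_j$), not the second and fourth, though the $q$-binomials you write down are the correct ones.
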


\begin{proof} We prove the first identity in part (i), since the other three are analogous.
Writing $c'_i=c_i-i-x$ for $1\le i\le j$, the left-hand side is equal to
$$\sum_{x< c_1<c_2<\dots<c_j\le u}q^{c_1+\dots+c_j}=q^{\binom{j+1}{2}+jx}\sum_{0\le c'_1\le c'_2\le \dots\le c'_j\le u-x-j}q^{c'_1+\dots+c'_j}.$$
This sum counts partitions with at most $j$ parts with largest part at most $u-x-j$, which is a well-known interpretation of the $q$-binomial coefficients (see e.g.~\cite{Andrews}).

Part (ii) follows easily from part (i) using the simplification
$$\binom{n+k+1}{2}+\binom{n-k+1}{2}+(n+k)x+(n-k)(y-1)-n(x+y)=n^2+k(k+x-y+1)$$
in the exponent of $q$.
\end{proof}

To see how Lemma~\ref{lem:sum} will be applied, let us first use it to give a proof of Lemma~\ref{lem:qbin2}. 

\begin{proof}[Proof of Lemma~\ref{lem:qbin2}]
The two statements are clearly equivalent, so we prove the second one. Using the encoding~\eqref{bij:Pto2ra}, together with Equations~\eqref{eq:maj2ra} and~\eqref{eq:sum1} for $k=0$, we get
\[
\sum_{P\in\P_{(x,y)\to (u,v)}} t^{\des(P)} q^{\maj(P)}=\sum_{n\ge0}t^n\sum_{\cd\in\tra{(x,u]}{[y,v)}_{n}} q^{\su\bfc+\su\bfd-n(x+y)}=\sum_{n\ge0}t^nq^{n^2}\qbin{u-x}{n}\qbin{v-y}{n}.\qedhere
\]
\end{proof}

\subsection{Crossings in single two-rowed arrays}\label{sec:crossings}

To encode paths in $\G_{a,b}$ as two-rowed arrays, we first turn the $U$ and $D$ steps into $N$ and $E$ steps, respectively. Additionally, to study crossings of the line $y=\ell$ in the original path, we move the starting point to $(\ell,0)$, so that these crossings become crossings of the diagonal $y=x$ for the resulting path. Denoting by $\P_{A\to B}^{\ge r}$ the set of paths in $\P_{A\to B}$ that cross the diagonal at least $r$ times,
this transformation is a bijection 
\begin{equation}\label{bij:GtoP} \G_{a,b}^{\ge r,\ell}\to\P_{(\ell,0)\to(b+\ell,a)}^{\ge r}. \end{equation} 
See Figure~\ref{fig:single-path-NE} for an example.
In analogy to the definitions for paths in $\G_{a,b}$ crossing a line $y=\ell$, we define upward (resp. downward) crossings of paths in $\P_{A\to B}$ to be vertices in the diagonal $y=x$ that are preceded and followed by an $N$ (resp. by an $E$).

\begin{figure}[htb]
\centering
\begin{tikzpicture}[scale=0.6]
\draw (-.3,0)--(8,0);
\draw (0,-.3)--(0,8.5);
\markx{1}{0}
\markx{2}{0}
\markx{3}{1}
\markx{4}{4}
\markx{6}{5}
\markx{7}{8}
\draw[dashed,thick,gray] (-.5,-.5)--(8,8);
\draw (1.5,2.5) node[gray] {$y=x$};
\draw[very thick,fill](1,0) \start\E\N\E\N\N\N\E\N\E\E\N\N\N\E;
\crossingc{3}{3}{olive};\crossingc{5}{5}{violet};\crossingc{6}{6}{orange};
\draw (1,0) node[below left] {$A$};
\draw (7,8) node[above right] {$B$};
\draw (3,2.5) node[right] {$P$} ;
\diamant{2}{0}
\draw[teal,above right=-1mm] (2,0) node[scale=.8] {$(2,0)$};
\diamant{3}{1}
\draw[teal, right=1mm] (3,1) node[scale=.8] {$(3,1)$};
\diamant{4}{4}
\draw[teal,below right] (4,4) node[scale=.8] {$(4,4)$};
\diamant{6}{5}
\draw[teal,below right] (6,5) node[scale=.8] {$(6,5)$};
\draw[<->] (8.75,4)--(9.5,4);
\begin{scope}[shift={(11,3.5)},scale=1.17]
\drawarray{{1,2,3,4,6,7}}{{0,0,1,4,5,8}}
\ineq{5}{5}
\uwcrossingc{2}{olive}
\uwcrossingc{4}{orange}
\dwcrossingc{4}{violet}
\draw[right] (5.7,0.4) node {$\in\tra{(1,7]}{[0,8)}_{4}$};
\end{scope}
\end{tikzpicture}
\caption{The path in $\P_{(1,0)\to(7,8)}^{\ge 3}$ obtained by applying the transformation~\eqref{bij:GtoP} to the path in Figure~\ref{fig:Gstrl}, and the corresponding two-rowed array given by the encoding~\eqref{bij:Pto2ra}, where the crossings have been circled.}
\label{fig:single-path-NE}
\end{figure}
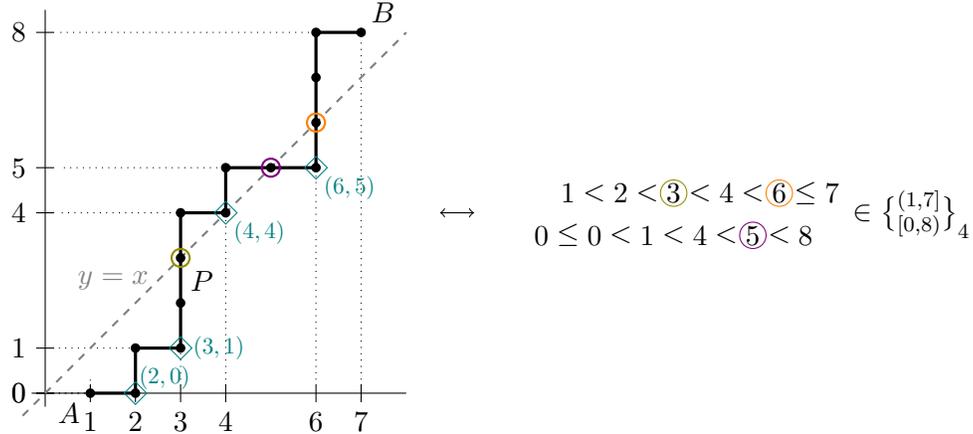

Next we show how these crossings of the diagonal can be read from the encoding~\eqref{bij:Pto2ra} of the path as a two-rowed array. Indeed, suppose that 
$P\in\P_{(x,y)\to (u,v)}$ is encoded by $\cd\in\tra{(x,u]}{[y,v)}_{n}$, and let $c_0:=x$, $d_0:=y$, $c_{n+1}:=u$, $d_{n+1}:=v$ by convention.
An upward crossing of $P$ occurs when, for some $0\le i\le n$, the vertex $(c_i,d_i)$ ---which is a valley or the first vertex of the path--- lies below the diagonal and the vertex $(c_i,d_{i+1})$ ---which is a peak or the last vertex of the path--- lies above the diagonal. This happens precisely when $d_i<c_i<d_{i+1}$ for some $0\le i\le n$.
Similarly, a downward crossing occurs when, for some $1\le i\le n+1$, the vertex $(c_{i-1},d_i)$ ---which is a peak or the starting point of the path--- lies above the diagonal and the vertex $(c_i,d_i)$ ---which is a valley or the last vertex of the path--- lies below the diagonal. This happens precisely when $c_{i-1}<d_i<c_i$ for some $1\le i\le n+1$.

This description allows us to extend the notion of crossings to two-rowed arrays $\cd\in\tra{(x,u]}{[y,v)}_{n\pm k}$ with $k\in\Z$,
whose rows may have different lengths. Using the convention $c_0:=x$, $d_0:=y$, $c_{n+k+1}:=u$, $d_{n-k+1}:=v$, say that $\cd$ has an {\em upward crossing} at $c_i$ if $0\le i\le n-|k|$ and
\begin{equation}\label{eq:upward} d_i<c_i<d_{i+1},\end{equation}
and that it has a {\em downward crossing} at $d_i$ if $1\le i\le n-|k|+1$ and
\begin{equation}\label{eq:downward}  c_{i-1}<d_i<c_i. \end{equation} 
For two-rowed arrays of the form $\cd\in\tra{(x,v)}{[y,u]}_{n\pm k}$, the definition of upward and downward crossings is the same, using now the convention  $c_0:=x$, $d_0:=y$, $c_{n+k+1}:=v$, $d_{n-k+1}:=u$. 
Figure~\ref{fig:truncated} shows two examples, where the crossings have been circled. As usual, the term {\em crossings} refers to both upward and downward crossings.

\begin{figure}[htb]
\centering
\begin{tikzpicture}[scale=0.6]
\begin{scope}[shift={(-2,6.5)},scale=1.17]
\drawarray{{0,1,4,5,7,7}}{{2,2,3,5}}
\ineq{5}{3}
\dwcrossingc{2}{olive}
\uwcrossingc{2}{violet}
\draw[right] (5.7,0.4) node {$\in\tra{(0,7]}{[2,5)}_{3\pm 1}$};
\end{scope}

\draw (-.3,0)--(5.5,0);
\draw (0,-.3)--(0,5.5);
\draw[dashed,thick,gray] (-.5,-.5)--(5.5,5.5);
\draw[very thick,fill](0,2) \start\E\N\E\E\E\N\N\E;
\crossingc{3}{3}{olive};\crossingc{4}{4}{violet};
\diamant{1}{2}
\draw[teal,below right] (1,2) node[scale=.8] {$(1,2)$};
\diamant{4}{3}
\draw[teal,below right] (4,3) node[scale=.8] {$(4,3)$};
\draw[left] (0,2) node[scale=.8] {$(0,2)$};
\draw[right] (5,5) node[scale=.8] {$(5,5)$};

\begin{scope}[shift={(13,0)}]
\begin{scope}[shift={(-2,6.5)},scale=1.17]
\drawarray{{0,3,4,6}}{{0,1,2,5,7,8}}
\ineqx{3}{5}
\dwcrossingc{1}{olive}
\uwcrossingc{2}{violet}
\dwcrossingc{3}{orange}

\draw[right] (5.2,0.4) node {$\in\tra{(0,6)}{[0,8]}_{3\mp 1}$};
\end{scope}

\draw (-.3,0)--(6.5,0);
\draw (0,-.3)--(0,5.5);
\draw[dashed,thick,gray] (-.5,-.5)--(5.5,5.5);
\draw[very thick,fill](0,0) \start\N\E\E\E\N\E\N\N\N\E\E;
\crossingc{1}{1}{olive};\crossingc{4}{4}{violet};\crossingc{5}{5}{orange};
\diamant{3}{1}
\draw[teal,below right] (3,1) node[scale=.8] {$(3,1)$};
\diamant{4}{2}
\draw[teal,below right] (4,2) node[scale=.8] {$(4,2)$};
\draw[left] (0,0) node[scale=.8] {$(0,0)$};
\draw[right] (6,5) node[scale=.8] {$(6,5)$};
\end{scope}
\end{tikzpicture}
\caption{Two two-rowed arrays $\cd$ with their crossings circled, and the corresponding paths $\Tr(\cd)$. Note that for the array on the right, $c_3=6$ is not a crossing because it violates the condition $i\le n-|k|$.}
\label{fig:truncated}
\end{figure}

In both of the above cases, let $\Tr(\cd)\in\P_{(x,y)\to(c_{n-|k|+1},d_{n-|k|+1})}$ be the path whose valleys are at coordinates $(c_i,d_i)$ for $1\le i\le n-|k|$ (with the caveat that, in the special case when $\cd\in\tra{(x,v)}{[y,u]}_{n}$ and $d_n=u$, the vertex $(c_n,d_n)$ is not actually a valley of this path). Then the upward and downward crossings of the two-rowed array $\cd$ can be identified with the upward and downward crossings of $\Tr(\cd)$; see the examples in Figure~\ref{fig:truncated}. Note that $\Tr(\cd)$ is essentially the path corresponding to the two-rowed array obtained by truncating the longer row of $\cd$ so that both rows have equal length. To be precise, this path depends not only on $\cd$ but also on the endpoints $x,y,u,v$, 

Throughout the paper, the {\em $r$th crossing} of a two-rowed array refers to the $r$th crossing {\em from the left}, in the order in which the entries are placed, namely $y,x,d_1,c_1,d_2,c_2,\dots$.
We note that this convention is different from the one used in~\cite{part1}, where path crossings were numbered from the right. The unusual convention in~\cite{part1} was needed because the path bijections in that paper, in order to track the major index, changed the portion of the paths to the {\em left} of a crossing. On the other hand, the notation in this paper becomes slightly simpler by defining bijections for two-rowed arrays (in Sections~\ref{sec:al-be} and~\ref{sec:ga-de}) that change the portion of the arrays to the {\em right} of a crossing instead.

For nonnegative $r$, the superscript ${\ge}r$ on a set of two-rowed arrays denotes the subset of those that have at least $r$ crossings. When $r\ge1$, a symbol $\uw$ (resp.\ $\dw$) next to this superscript denotes the subset where the $r$th crossing is an upward (resp.\ downward) crossing.
For example, 
$\tra{(x,u]}{[y,v)}_{n\pm k}^{\ge r\uw}$ consists of two-rowed arrays in $\tra{(x,u]}{[y,v)}_{n\pm k}^{\ge r}$ where the $r$th crossing is an upward crossing. In the case $r=0$, we simply define 
\begin{equation}\label{eq:convention_r0}
\tra{(x,u]}{[y,v)}_{n\pm k}^{\ge 0\uw}=\tra{(x,u]}{[y,v)}_{n\pm k}^{\ge 0\dw}=\tra{(x,u]}{[y,v)}_{n\pm k}^{\ge 0}=\tra{(x,u]}{[y,v)}_{n\pm k}
\end{equation}
by convention.

The encoding~\eqref{bij:Pto2ra} restricts to a bijection 
\begin{equation}\label{bij:Pto2ra:cro}
\{P\in\P_{(x,y)\to (u,v)}^{\ge r}:\des(P)=n\}\to\tra{(x,u]}{[y,v)}_{n}^{\ge r}.
\end{equation}
Composing this with the bijection~\eqref{bij:GtoP} and using Equation~\eqref{eq:maj2ra}, it follows that
\begin{equation}\label{eq:Gto2ra}
G_{a,b}^{\ge r,\ell}(t,q)=\sum_{n\ge0}\,t^n\sum_{\cd\in\tra{(x,u]}{[y,v)}_{n}^{\ge r}} q^{\su\bfc+\su\bfd-n(x+y)},
\end{equation}
where $(x,y)=(\ell,0)$ and $(u,v)=(b+\ell,a)$.

To prove Theorem~\ref{thm:line:des}, we will construct bijections between $\tra{(x,u]}{[y,v)}_{n}^{\ge r}$ and sets of the form 
$\tra{(x,u]}{[y,v)}_{n\pm k}$ or $\tra{(x,v)}{[y,u]}_{n\pm k}$ for some $k\in\Z$, which will depend on the relations between $x$ and $y$ and between $u$ and $v$, and then apply Lemma~\ref{lem:sum}.

\begin{lemma}\label{lem:first_crossing}
Let $r\ge1$, and let $\cd$ be a two-rowed array in either $\tra{(x,u]}{[y,v)}_{n\pm k}^{\ge r}$ or $\tra{(x,v)}{[y,u]}_{n\pm k}^{\ge r}$.\\
If $x>y$ or $x=y=d_1$, then the $r$th crossing of $\cd$ is an upward crossing if $r$ is odd, and a downward crossing if $r$ is even.\\
If $x<y$ or $x=y<d_1$, then the $r$th crossing of $\cd$ is a downward crossing if $r$ is odd, and an upward crossing if $r$ is even.
\end{lemma}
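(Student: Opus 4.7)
The plan is to reduce the lemma to two assertions about an associated sign sequence: (a) crossings alternate between upward and downward, and (b) the type of the first crossing is dictated by the first strict entry of this sequence.

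For each index $i$, define the signs
$$\alpha_i = \operatorname{sign}(d_i - c_i), \qquad \beta_i = \operatorname{sign}(d_{i+1} - c_i),$$
with the conventions $c_0 = x$, $d_0 = y$ and the appropriate right-endpoint convention, so that $+$, $-$, $0$ indicate being strictly above, strictly below, or on the diagonal $y = x$. Unpacking the definitions in \eqref{eq:upward}--\eqref{eq:downward}, an upward crossing at $c_i$ is precisely a strict transition $(\alpha_i, \beta_i) = (-, +)$, and a downward crossing at $d_{i+1}$ is precisely a strict transition $(\beta_i, \alpha_{i+1}) = (+, -)$. Using the strict monotonicity $d_i < d_{i+1}$ and $c_i < c_{i+1}$ of the two rows, I would check that no other strict sign transitions can occur in the interleaved sequence $\alpha_0, \beta_0, \alpha_1, \beta_1, \ldots$; for instance, $\alpha_i = +$ forces $d_{i+1} > d_i > c_i$, hence $\beta_i = +$, ruling out a $+ \to -$ transition from $\alpha_i$ to $\beta_i$, and the remaining three forbidden transitions are ruled out in the same way.

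Assertion (a) is then immediate: every admissible strict transition flips the strict sign, so the types of successive crossings must alternate. For assertion (b), the first crossing corresponds to the first admissible strict transition, which is the flip immediately following the first strict entry of the sequence. If $x > y$ then $\alpha_0 = -$, so the first strict entry is $-$ and the first transition is forced to be $- \to +$, producing an upward first crossing; symmetrically, $x < y$ gives a downward first crossing. If $x = y$ then $\alpha_0 = 0$, and since $d_1 \ge y = x$ we have $\beta_0 = \operatorname{sign}(d_1 - x) \in \{0, +\}$: when $d_1 > x$, $\beta_0 = +$ is the first strict entry, yielding a downward first crossing; when $d_1 = x$, instead $\beta_0 = 0$ but $\alpha_1 = \operatorname{sign}(d_1 - c_1) = -$ (because $c_1 > x = d_1$), making the first strict entry $-$ and hence the first crossing upward.

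The same argument applies verbatim to arrays of the form $\tra{(x,v)}{[y,u]}_{n\pm k}$, since the crossing definitions and the row-monotonicity are identical near the left edge, and the different right-endpoint conventions are irrelevant to the first-crossing analysis. I expect the main point requiring care to be the verification that row-monotonicity forbids all strict sign transitions except the two that correspond to crossings; once that step is cleanly recorded, both (a) and (b), and hence the lemma, follow mechanically.
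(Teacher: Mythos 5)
Your proposal is correct. It establishes exactly what the paper's proof asserts, but where the paper argues in one step via the truncated path $\Tr\cd$ --- identifying crossings of the array with crossings of $\Tr\cd$, and then treating as geometrically evident that a path starting strictly below (resp.\ above, resp.\ on) the diagonal has its crossings alternate beginning with an upward (resp.\ downward, resp.\ depending on the first step) crossing --- you work directly on the array entries and actually verify that geometric fact: your signs $\alpha_i=\operatorname{sign}(d_i-c_i)$ and $\beta_i=\operatorname{sign}(d_{i+1}-c_i)$ record which side of the diagonal the valleys and peaks of $\Tr\cd$ lie on, the row monotonicity $c_i<c_{i+1}$, $d_i<d_{i+1}$ rules out all strict transitions except $(-,+)$ at $(\alpha_i,\beta_i)$ and $(+,-)$ at $(\beta_i,\alpha_{i+1})$, and (as you should record explicitly when writing this up) a zero entry cannot mediate a sign change, since $\beta_i=0$ forces $\alpha_i=\alpha_{i+1}=-$ and $\alpha_{i+1}=0$ forces $\beta_i=\beta_{i+1}=+$. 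With that in place, alternation and the determination of the first crossing's type by the first nonzero sign ($-$ when $x>y$ or $x=y=d_1$, $+$ when $x<y$ or $x=y<d_1$) follow as you say, and the argument is indeed insensitive to the right-endpoint conventions and to the index cutoff $i\le n-|k|$, since in-range positions precede out-of-range ones. So the two proofs rest on the same idea; the paper's is shorter because it leans on the path picture already set up in Section 3.2, while yours is more self-contained and makes the alternation mechanism explicit at the level of the arrays.
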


\begin{proof}
As noted above, upward and downward crossings of $\cd$ are the same as those of the path $\Tr(\cd)\in\P_{(x,y)\to(c_{n-|k|+1},d_{n-|k|+1})}$.
If $x>y$ (resp.\ $x<y$), this path starts below (resp.\ above) the diagonal, which forces the first crossing to be upward (resp.\ downward), with successive crossings alternating between upward and downward. If $x=y$, then $\Tr(\cd)$ starts with an $E$ if $y=d_1$, and with an $N$ if $y<d_1$, from which the same conclusions follow.
\end{proof}

The next lemma shows that the relationships between $x$ and $y$ and between $u$ and $v$ often force the number of crossings of a two-rowed array to have a given parity. We use the notation $n\mp s$ to mean $n\pm(-s)$.

\begin{lemma}\label{lem:even=odd}
Let $s,m\ge0$. \begin{enumerate}[(a)]
\item If $x>y$ and $u<v$, then
\begin{align}\label{eq:x>y,u<v1} & \tra{(x,u]}{[y,v)}_{n\pm s}^{\ge 2m+1}=\tra{(x,u]}{[y,v)}_{n\pm s}^{\ge 2m+1\uw}=\tra{(x,u]}{[y,v)}_{n\pm s}^{\ge 2m\dw}=\tra{(x,u]}{[y,v)}_{n\pm s}^{\ge 2m},&\\
\label{eq:x>y,u<v2}
& \tra{(x,v)}{[y,u]}_{n\mp s}^{\ge 2m+2}=\tra{(x,v)}{[y,u]}_{n\mp s}^{\ge 2m+2\dw}=\tra{(x,v)}{[y,u]}_{n\mp s}^{\ge 2m+1\uw}=\tra{(x,v)}{[y,u]}_{n\mp s}^{\ge 2m+1}.&
\end{align}
\item If $x>y$ and $u>v$, then
\begin{align}\label{eq:x>y,u>v1} & \tra{(x,u]}{[y,v)}_{n\mp s}^{\ge 2m+2}=\tra{(x,u]}{[y,v)}_{n\mp s}^{\ge 2m+2\dw}=\tra{(x,u]}{[y,v)}_{n\mp s}^{\ge 2m+1\uw}=\tra{(x,u]}{[y,v)}_{n\mp s}^{\ge 2m+1},&\\
\label{eq:x>y,u>v2}
&\tra{(x,v)}{[y,u]}_{n\pm s}^{\ge 2m+1}=\tra{(x,v)}{[y,u]}_{n\pm s}^{\ge 2m+1\uw}=\tra{(x,v)}{[y,u]}_{n\pm s}^{\ge 2m\dw}=\tra{(x,v)}{[y,u]}_{n\pm s}^{\ge 2m}.&
\end{align}
\item If $x<y$ and $u<v$, then
\begin{align}\label{eq:x<y,u<v1}  &\tra{(x,u]}{[y,v)}_{n\pm s}^{\ge 2m+2}=\tra{(x,u]}{[y,v)}_{n\pm s}^{\ge 2m+2\uw}=\tra{(x,u]}{[y,v)}_{n\pm s}^{\ge 2m+1\dw}=\tra{(x,u]}{[y,v)}_{n\pm s}^{\ge 2m+1},&\\
\label{eq:x<y,u<v2}
&\tra{(x,v)}{[y,u]}_{n\mp s}^{\ge 2m+1}=\tra{(x,v)}{[y,u]}_{n\mp s}^{\ge 2m+1\dw}=\tra{(x,v)}{[y,u]}_{n\mp s}^{\ge 2m\uw}=\tra{(x,v)}{[y,u]}_{n\mp s}^{\ge 2m}.&
\end{align}
\item If $x<y$ and $u>v$, then
\begin{align}\label{eq:x<y,u>v1} &\tra{(x,u]}{[y,v)}_{n\mp s}^{\ge 2m+1}=\tra{(x,u]}{[y,v)}_{n\mp s}^{\ge 2m+1\dw}=\tra{(x,u]}{[y,v)}_{n\mp s}^{\ge 2m\uw}=\tra{(x,u]}{[y,v)}_{n\mp s}^{\ge 2m},&\\
\label{eq:x<y,u>v2}
&\tra{(x,v)}{[y,u]}_{n\pm s}^{\ge 2m+2}=\tra{(x,v)}{[y,u]}_{n\pm s}^{\ge 2m+2\uw}=\tra{(x,v)}{[y,u]}_{n\pm s}^{\ge 2m+1\dw}=\tra{(x,v)}{[y,u]}_{n\pm s}^{\ge 2m+1}.&
\end{align}
\item If $x>y$ and $u=v$, then  \eqref{eq:x>y,u<v1}, \eqref{eq:x>y,u<v2}, \eqref{eq:x>y,u>v1} hold for $s\ge1$, and \eqref{eq:x>y,u>v2} holds for $s\ge0$.
\item If $x<y$ and $u=v$, then  \eqref{eq:x<y,u<v1}, \eqref{eq:x<y,u<v2}, \eqref{eq:x<y,u>v1} hold for $s\ge1$, and \eqref{eq:x<y,u>v2} holds for $s\ge0$.
\item Statements (a),(b),(e) also hold if we replace $x>y$ with $x=y$ and restrict to 
two-rowed arrays $\cd$ with $y=d_1$.
\end{enumerate}
\end{lemma}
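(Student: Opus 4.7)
The plan is to prove all seven parts simultaneously by combining the alternation of crossings (already encoded in Lemma~\ref{lem:first_crossing}) with a case-by-case analysis of where the endpoint of the path $\Tr(\cd)$ lies relative to the diagonal $y=x$. The key observation is that between any two consecutive crossings of a lattice path the path stays strictly on one side of the diagonal, so crossings alternate in type; consequently the \emph{parity} of the total crossing count of $\Tr(\cd)$ is forced by the side of the diagonal on which its endpoint lies. Once both the type of the first crossing (from Lemma~\ref{lem:first_crossing}) and the parity of the total count are known, one immediately reads off the type of the $r$-th crossing and the equivalence ``$\ge 2m\Leftrightarrow\ge 2m+1$'' (resp.\ ``$\ge 2m+1\Leftrightarrow\ge 2m+2$'') that underlies each four-fold equality.

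First I would unpack the endpoint of $\Tr(\cd)$ in each of the four flavors of array. For an array in $\tra{(x,u]}{[y,v)}_{n\pm s}$ the endpoint has $y$-coordinate equal to the convention value $v$ and $x$-coordinate $c_{n-s+1}$, which is a genuine entry of $\bfc$ strictly less than $u$ when $s\ge 1$ and equals $u$ when $s=0$; for $\tra{(x,u]}{[y,v)}_{n\mp s}$ with $s\ge 1$ the endpoint has $x$-coordinate $u$ and $y$-coordinate $d_{n-s+1}<v$; and analogous statements hold for $\tra{(x,v)}{[y,u]}_{n\pm s}$ and $\tra{(x,v)}{[y,u]}_{n\mp s}$ with the roles of the two coordinates swapped. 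In parts (a)--(d) the hypotheses strictly separating $x$ from $y$ and $u$ from $v$ place the endpoint strictly above or below the diagonal, determining the parity of the crossing count. For instance in part~(a), equation~\eqref{eq:x>y,u<v1}: the start $(x,y)$ is below the diagonal and the endpoint $(c_{n-s+1},v)$ satisfies $c_{n-s+1}\le u<v$ so it is strictly above, forcing an odd number of crossings. Lemma~\ref{lem:first_crossing} then makes the $(2m+1)$-th crossing upward and the $2m$-th downward (for $m\ge 1$; the $m=0$ case uses convention~\eqref{eq:convention_r0}), which together with the parity condition gives the four-fold equality. The remaining equations of parts (a)--(d) follow by exactly the same template, with the $\uw/\dw$ decorations and the $2m/2m+1$ shift swapping according to whether the endpoint is above or below.

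The main obstacle is the boundary cases (e), (f), (g), where one of the strict inequalities is replaced by an equality and the endpoint may lie on the diagonal. The restriction to $s\ge 1$ in three of the four equations of (e) and (f) is precisely what forces $c_{n-s+1}<u=v$ (resp.\ $d_{n-s+1}<u=v$), keeping the endpoint strictly above (resp.\ below) the diagonal and making the parts (a)--(d) argument apply directly. In the remaining equation, valid for $s\ge 0$, the endpoint is $(v,v)$ on the diagonal when $s=0$; to recover the parity I would argue via the direction of the last step of $\Tr(\cd)$. Since $c_{n-s+1}<v$ is forced by the constraint $c_i<v$ of the $\tra{(x,v)}{[y,u]}$ encoding (with the isolated degenerate configuration handled separately), the path arrives at $(v,v)$ by an $E$-step from $(v-1,v)$, which is strictly above the diagonal, so the path is ``effectively above'' immediately before terminating and the parity analysis proceeds as in the corresponding strict case. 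Finally, part~(g) follows by the second half of Lemma~\ref{lem:first_crossing}: when $x=y=d_1$ the first step of the path is $E$, carrying it strictly below the diagonal, so the starting-point analysis used for $x>y$ in parts (a), (b), (e) applies verbatim.
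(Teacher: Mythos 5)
Your proposal is correct and follows essentially the same route as the paper's proof: the outer equalities come from Lemma~\ref{lem:first_crossing}, and the parity of the number of crossings of $\Tr(\cd)$ is forced by the location of its last vertex relative to the diagonal, with the $s\ge1$ hypotheses in (e),(f) guaranteeing strictness, the $s=0$, $u=v$ case of \eqref{eq:x>y,u>v2}/\eqref{eq:x<y,u>v2} handled via the final $E$ step into $(v,v)$, and (g) via $x=y=d_1$ behaving like $x>y$. The only small slip is an index: in that boundary case the nonempty final $E$ run follows from $c_{n}<v$ (the last genuine top-row entry), not from $c_{n-s+1}<v$, which for $s=0$ is the convention value $v$ itself.
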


\begin{proof}
In each equation, the outer equalities follow from Lemma~\ref{lem:first_crossing} (using the convention~\eqref{eq:convention_r0} as needed), and the left-hand side is trivially contained in the right-hand side. To prove the reverse containment, we will show that the parity of the number of crossings of the relevant two-rowed arrays is determined by the relation between $x$ and $y$ and between $u$ and $v$ in each case.

Recall that if $\cd$ is a two-rowed array in either $\tra{(x,u]}{[y,v)}_{n\pm k}$ or $\tra{(x,v)}{[y,u]}_{n\pm k}$, for some $k\in\Z$, then $\Tr(\cd)$ is a path from $(x,y)$ to $(c_{n-|k|+1},d_{n-|k|+1})$ which has the same upward and downward crossings as~$\cd$. The parity of the number of crossings is determined by what side of the diagonal the endpoints of the path are on.
If $x>y$, $\Tr(\cd)$ starts below the diagonal; if $x<y$, it starts above the diagonal; and if $x=y=d_1$, it starts with an $E$ leaving the diagonal, so it behaves as in the $x>y$ case.

Suppose first that $\cd\in\tra{(x,u]}{[y,v)}_{n\pm s}$, where $s\ge0$ and $u<v$. Then the last vertex of $\Tr(\cd)$ is $(c_{n-s+1},v)$, which lies above the diagonal, since $c_{n-s+1}\le u<v$. Thus, if $x>y$, then $\Tr(\cd)$ starts below the diagonal and ends above the diagonal, so it must have an odd number of crossings, proving Equation~\eqref{eq:x>y,u<v1}. If $x=y=d_1$, the same conclusion holds. On the other hand, if $x<y$, then $\Tr(\cd)$ starts and ends above the diagonal, so it must have an even number of crossings, proving Equation~\eqref{eq:x<y,u<v1}. 
Modifying the hypotheses so that $s\ge1$ and $u\ge v$, the last vertex of $\Tr(\cd)$ still lies above the diagonal, since $c_{n-s+1}<c_{n-s+2}\le u\le v$, so Equations~\eqref{eq:x>y,u<v1} and~\eqref{eq:x<y,u<v1} also hold in this case.

If $\cd\in\tra{(x,v)}{[y,u]}_{n\mp s}$, where $s\ge0$ and $u<v$, then the last vertex of $\Tr(\cd)$ is $(v,d_{n-s+1})$, which lies below the diagonal, since $d_{n-s+1}\le u<v$. Thus, $\Tr(\cd)$ must have an even number of crossings if $x>y$ or $x=y=d_1$, proving Equation~\eqref{eq:x>y,u<v2}, and an odd number of crossings if $x<y$, proving Equation~\eqref{eq:x<y,u<v2}. 
These two equations still hold with the modified hypotheses $s\ge1$ and $u\le v$, since $d_{n-s+1}<d_{n-s+2}\le u\le v$ in this case, so the last vertex of $\Tr(\cd)$ still lies below the diagonal.

If $\cd\in\tra{(x,u]}{[y,v)}_{n\mp s}$, where $s\ge0$ and and $u>v$, then the last vertex of $\Tr(\cd)$ is $(u,d_{n-s+1})$, which lies below the diagonal, since $d_{n-s+1}\le v< u$. This vertex also lies below the diagonal when $s\ge1$ and and $u\ge v$, since $d_{n-s+1}<d_{n-s+2}\le v\le u$.
This proves Equations~\eqref{eq:x>y,u>v1} and~\eqref{eq:x<y,u>v1}.

If $\cd\in\tra{(x,v)}{[y,u]}_{n\pm s}$, where $s\ge0$ and $u>v$, then the last vertex of $\Tr(\cd)$ is $(c_{n-s+1},u)$, which lies above the diagonal, since $c_{n-s+1}\le v< u$. 
This vertex also lies above the diagonal when $s\ge1$ and $u\ge v$, since $c_{n-s+1}<c_{n-s+2}\le v\le u$.
Finally, when $s=0$ and $u=v$, the path $\Tr(\cd)$ ends on the diagonal (at $(v,u)$), but its last step is an $E$ step, since $c_{n-s}<v$.
This proves Equations~\eqref{eq:x>y,u>v2} and~\eqref{eq:x<y,u>v2} for all $s\ge0$ and $u\ge v$.
\end{proof}

\subsection{The bijections $\al_r$ and $\be_r$}\label{sec:al-be}

We are almost ready to define the key bijections $\al_r$ and $\be_r$. These are reminiscent of the bijections $\si_r$ and $\ta_r$ defined in~\cite{part1} for paths. An important difference, however, is that the image by $\be_r$ of a two-rowed array that encodes a path does not encode a path in general, so one cannot view $\be_r$ as a map on paths.

Let $\cd$ be a two-rowed array in either $\tra{(x,u]}{[y,v)}_{n\pm k}$ or $\tra{(x,v)}{[y,u]}_{n\pm k}$. We say that a crossing of $\cd$ at $c_i$ (resp.\ $d_i$) is {\em proper} if $c_i\notin\{u,v\}$ (resp.\ $d_i\notin\{u,v\}$).

For $r\ge1$, the map $\al_r$ applies to two-rowed arrays $\cd$ whose $r$th crossing is a proper upward crossing,
and it swaps the parts of the top and the bottom rows of the array to the right of this crossing. Schematically, if the $r$th crossing is at $c_i$, we have
\begin{center}
\begin{tikzpicture}[scale=0.5]
\draw (0.5,1) rectangle (5,2);
\draw (2.75,1.5) node {$\cdots$};
\draw (.5,1.5) node[right=-1] {$x$};
\draw (0,0) rectangle (4.5,1);
\draw (2.25,.5) node {$\cdots$};
\draw (0,.5) node[right=-1.5] {$y$};
\draw[fill=yellow] (4.5,1.5) circle (.4);
\draw (4.5,1.5) node {$c_i$};
\draw[pattern color=red,pattern=north west lines] (5,1) rectangle (9,2);
\draw (7.5,1.5) node {$\cdots$};
\draw[pattern color=blue,pattern=north east lines] (4.5,0) rectangle (7.5,1);
\draw (6.5,.5) node {$\cdots$};
\draw (4.5,.5) node[left=-1.5] {$d_{i}$};
\draw (5,1.5) node[right=-1.5] {$c_{i+1}$};
\draw (4.5,.5) node[right=-1.5] {$d_{i+1}$};
\draw (9,1.5) node[left=-1.5] {$u$};
\draw (7.5,.5) node[left=-1.5] {$v$};
\draw[<->] (10,1) -- node[above]{$\al_r$} (11,1);

\begin{scope}[shift={(12,0)}]
\draw (0.5,1) rectangle (5,2);
\draw (2.75,1.5) node {$\cdots$};
\draw (.5,1.5) node[right=-1.5] {$x$};
\draw (0,0) rectangle (4.5,1);
\draw (2.25,.5) node {$\cdots$};
\draw (0,.5) node[right=-1.5] {$y$};
\draw[fill=yellow] (4.5,1.5) circle (.4);
\draw (4.5,1.5) node {$c_i$};
\draw[pattern color=blue,pattern=north east lines] (5,1) rectangle (8,2);
\draw (7,1.5) node {$\cdots$};
\draw[pattern color=red,pattern=north west lines] (4.5,0) rectangle (8.5,1);
\draw (7,.5) node {$\cdots$};
\draw (4.5,.5) node[left=-1.5] {$d_{i}$};
\draw (5,1.5) node[right=-1.5] {$d_{i+1}$};
\draw (4.5,.5) node[right=-1.5] {$c_{i+1}$};
\draw (8,1.5) node[left=-1.5] {$v$};
\draw (8.5,.5) node[left=-1.5] {$u$};
\end{scope}
\end{tikzpicture}
\end{center}
The properness of the crossing guarantees that $c_{i+1}$ exists and that $c_i<c_{i+1}$.
Additionally, we have $c_i<d_{i+1}$ and $d_i<c_{i+1}$, so the rows of $\al_r(\cd)$ are increasing.
The two-rowed array $\al_r(\cd)$ has a crossing at $c_i$, since $d_i<c_i<c_{i+1}$, and this crossing is still proper. This is in fact the $r$th crossing of $\al_r(\cd)$, because the portion of the arrays to the left of $c_i$ is not affected by $\al_r$. It follows that $\al_r$ is an involution.

Similarly, the map $\be_r$ applies to two-rowed arrays $\cd$ whose $r$th crossing is a proper downward crossing, and it also swaps the top and the bottom rows of the array to the right of this crossing. Schematically, if the $r$th crossing is at $d_i$, we have
\begin{center}
\begin{tikzpicture}[scale=0.5]
\draw (0.5,1) rectangle (4.5,2);
\draw (2.25,1.5) node {$\cdots$};
\draw (.5,1.5) node[right=-1.5] {$x$};
\draw (0,0) rectangle (5,1);
\draw (2.5,.5) node {$\cdots$};
\draw (0,.5) node[right=-1.5] {$y$};
\draw[fill=yellow] (4.5,.5) circle (.4);
\draw (4.5,.5) node {$d_i$};
\draw[pattern color=red,pattern=north west lines] (4.5,1) rectangle (9,2);
\draw (6.75,1.5) node {$\cdots$};
\draw[pattern color=blue,pattern=north east lines] (5,0) rectangle (7.5,1);
\draw (6.5,.5) node {$\cdots$};
\draw (4.5,1.5) node[left=-1.5] {$c_{i-1}$};
\draw (4.5,1.5) node[right=-1.5] {$c_{i}$};
\draw (5,.5) node[right=-1.5] {$d_{i+1}$};
\draw (9,1.5) node[left=-1.5] {$u$};
\draw (7.5,.5) node[left=-1.5] {$v$};
\draw[<->] (10,1) -- node[above]{$\be_r$} (11,1);

\begin{scope}[shift={(12,0)}]
\draw (0.5,1) rectangle (4.5,2);
\draw (2.25,1.5) node {$\cdots$};
\draw (1,1.5) node {$x$};
\draw (0,0) rectangle (5,1);
\draw (2.5,.5) node {$\cdots$};
\draw (.5,.5) node {$y$};
\draw[fill=yellow] (4.5,.5) circle (.4);
\draw (4.5,.5) node {$d_i$};
\draw[pattern color=blue,pattern=north east lines] (4.5,1) rectangle (7,2);
\draw (6,1.5) node {$\cdots$};
\draw[pattern color=red,pattern=north west lines] (5,0) rectangle (9.5,1);
\draw (7.25,.5) node {$\cdots$};
\draw (4.5,1.5) node[left=-1.5] {$c_{i-1}$};
\draw (4.5,1.5) node[right=-1.5] {$d_{i+1}$};
\draw (5,.5) node[right=-1.5] {$c_{i}$};
\draw (7,1.5) node[left=-1.5] {$v$};
\draw (9.5,.5) node[left=-1.5] {$u$};
\end{scope}
\end{tikzpicture}
\end{center}
Again, the $r$th crossing of $\be_r(\cd)$ is still at $d_i$ and is a proper crossing, and the map $\be_r$ is an involution.

\begin{lemma}\label{lem:al,be}
Let $x,y,u,v,k\in\Z$, $n\ge0$ and $r\ge1$, satisfying that, if $u>v$, then $k\le0$, and if $u<v$, then $k\ge1$. The map $\al_r$ restricts to a bijection
$$\tra{(x,u]}{[y,v)}_{n\pm k}^{\ge r\uw}\overset{\al_r}{\longleftrightarrow}\tra{(x,v)}{[y,u]}_{n\mp k}^{\ge r\uw},$$
and the map $\be_r$ restricts to a bijection
$$\tra{(x,u]}{[y,v)}_{n\pm (k-1)}^{\ge r\dw}\overset{\be_r}{\longleftrightarrow}\tra{(x,v)}{[y,u]}_{n\mp k}^{\ge r\dw}.$$
Both $\al_r$ and $\be_r$ preserve the sum of the entries of the arrays.
\end{lemma}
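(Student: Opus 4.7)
The plan is to verify four things for each of the maps $\al_r$ and $\be_r$: well-definedness of the image, preservation of the position and type of the $r$th crossing, involutivity (which yields bijectivity), and preservation of the sum of entries. I will carry these out in this order, leveraging the fact that both maps only touch the portion of the array strictly to the right of the $r$th crossing.

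First I would check that, under the hypothesis linking the sign of $k$ to the relation between $u$ and $v$, the $r$th crossing is automatically \emph{proper}. For $\al_r$, the upward crossing at $c_i$ satisfies $d_i<c_i<d_{i+1}$; if $d_{i+1}$ is an actual entry then $c_i<v-1$, and if $d_{i+1}$ is the sentinel $v$ then still $c_i<v$, so $c_i\ne v$ in either case. That $c_i\ne u$ follows by a short case split: if $k>0$ the upper range $i\le n-|k|$ already forces $i<n+k$, so $c_i<c_{i+1}\le u$; if $k\le 0$ then $u\ge v$ by hypothesis, and combining this with $d_{i+1}<v\le u$ rules out $c_i=u$. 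An analogous argument works for $\be_r$ at $d_i$, showing $d_i\ne u,v$.

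Next I would verify well-definedness of $\al_r(\cd)$. Writing out the image, the new top row $c_1<\dots<c_i<d_{i+1}<\dots<d_{n-k}$ has $n-k$ entries and is strictly increasing at the join because $c_i<d_{i+1}$ is exactly the upward crossing condition; the new bottom row $d_1<\dots<d_i<c_{i+1}<\dots<c_{n+k}$ has $n+k$ entries and is strictly increasing because $d_i<c_i<c_{i+1}$. The new top entries satisfy $c_j\le c_i<v$ and the appended $d$-entries are $<v$, so the top bound $<v$ for $\tra{(x,v)}{[y,u]}_{n\mp k}$ holds; the new bottom entries satisfy $d_j\le d_i<c_i\le u$ and the appended $c$-entries are $\le u$, so the bottom bound $\le u$ holds. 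The analogous length/bound check for $\be_r$ uses $n+k-1+1=n+k$ and $n-k+1-1=n-k$ for the new lengths, and the downward crossing inequality $c_{i-1}<d_i<c_i$ for strict increase at the join.

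Then I would check that the $r$th crossing is preserved. Since the entries at positions $<i$ in each row are untouched, every crossing of $\cd$ occurring before position $i$ reappears identically in the image, in the same order. Inspecting the three entries at position $i$ of the image shows $c_i$ is still flanked by $d_i$ on the left (unchanged) and the new value $c_{i+1}$ on the right (which was $c_{i+1}$ before the swap), so $d_i<c_i<c_{i+1}$ is still an upward crossing; analogously for $\be_r$, at $d_i$ one still has $c_{i-1}<d_i<c_i$ after the swap. Hence in both cases the $r$th crossing is preserved in position and type. Involutivity follows by a direct computation: applying the map a second time swaps exactly the same tails back (using the fact that the $r$th crossing sits at the same entry and is still proper), returning $\cd$. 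Finally, preservation of $\su\bfc+\su\bfd$ is immediate, since both maps act by permuting entries between the two rows without changing their multiset.

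The main technical obstacle is the properness step, because one must carefully combine the hypothesis ``$u<v\Rightarrow k\ge 1$, $u>v\Rightarrow k\le 0$'' with the crossing inequalities and the sentinel conventions $c_0=x$, $d_0=y$, $c_{n+k+1}=u$, $d_{n-k+1}=v$ to exclude $c_i=u$ (for $\al_r$) or $d_i=u$ (for $\be_r$). Once properness is established, the rest of the argument is a routine matter of tracking indices and bounds across the swap.
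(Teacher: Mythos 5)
Your outline follows the same route as the paper: use the hypothesis on $k$ to obtain properness of the $r$th crossing, check that the tail swap produces increasing rows of the correct lengths within the correct bounds, observe that the $r$th crossing is unchanged in position and type, and combine the involution property with preservation of the entry sum. The forward half of this is carried out correctly, and your properness argument for the $\tra{(x,u]}{[y,v)}$-type sets matches the paper's.

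The gap is in the step ``involutivity (which yields bijectivity)''. What you actually establish is the containment $\al_r\bigl(\tra{(x,u]}{[y,v)}_{n\pm k}^{\ge r\uw}\bigr)\subseteq\tra{(x,v)}{[y,u]}_{n\mp k}^{\ge r\uw}$ (and its analogue for $\be_r$), together with $\al_r\circ\al_r=\mathrm{id}$ on the left-hand set; this gives injectivity but not surjectivity onto the stated codomain. To finish, you must also show that every array in $\tra{(x,v)}{[y,u]}_{n\mp k}^{\ge r\uw}$, resp.\ $\tra{(x,v)}{[y,u]}_{n\mp k}^{\ge r\dw}$, has a \emph{proper} $r$th crossing (so the map is defined on it) and that applying the map sends it back into $\tra{(x,u]}{[y,v)}_{n\pm k}^{\ge r\uw}$, resp.\ $\tra{(x,u]}{[y,v)}_{n\pm(k-1)}^{\ge r\dw}$. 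Your properness check only treats the $\tra{(x,u]}{[y,v)}$-type sets, and the codomain-side check is not vacuous: in $\tra{(x,v)}{[y,u]}_{n\mp k}$ the bottom bound is weak, so a downward crossing could a priori occur at $d_i=u$; excluding it again uses the hypothesis, since $u=d_i<c_i\le v$ forces $u<v$, while $d_i=u$ must be the last bottom entry, so $n+k\le n-|k|+1$ forces $k\le 0$, contradicting $u<v\Rightarrow k\ge1$. This is precisely why the paper's proof verifies properness for all four sets and then notes that its schematic description of $\al_r$ and $\be_r$ is symmetric, so that every element of the right-hand set is obtained. With those two additions (codomain properness and the mirror-image containment, which is a routine repetition of your forward computation), your argument coincides with the paper's.
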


\begin{proof}
The conditions on $k$, which depend on the relationship between $u$ and $v$, guarantee that the $r$th crossing of a two-rowed array in any of the four sets above is always proper, and so the maps $\al_r$ and $\be_r$ are  defined. Indeed, an improper upward crossing of $\cd\in\tra{(x,u]}{[y,v)}_{n\pm k}$ at $c_i$ could only occur if $u=c_i<d_{i+1}\le v$ and $k\le 0$.
Arrays $\cd\in\tra{(x,v)}{[y,u]}_{n\mp k}$ cannot have improper upward crossings, since $c_i=v$ is incompatible with $i\le n-|k|$.
An improper downward crossing of $\cd\in\tra{(x,u]}{[y,v)}_{n\pm (k-1)}$ at $d_i$ could only occur if $v=d_i<c_i\le u$ and $k-1\ge 0$. 
And an improper downward crossing of $\cd\in\tra{(x,v)}{[y,u]}_{n\mp k}$ at $d_i$ could only occur if $u=d_i<c_i\le v$ and $k\le 0$. 

Having already seen that $\al_r$ and $\be_r$ are involutions, it remains describe their images when restricted to two-rowed arrays whose rows have given lengths.
Given $\cd\in\tra{(x,u]}{[y,v)}_{n\pm k}^{\ge r\uw}$ whose $r$th crossing is at $c_i$, if we write $\cd$ as
\begin{center}
\begin{tikzpicture}[scale=.8]
\draw (.5,.8) node {$x$};
\draw (1.5,.8) node {$c_1$};
\draw (2.5,.8) node {$c_2$};
\draw (3.7,.8) node {$\cdots$};
\foreach \x in {1,2,3,4.4,5.4}{\draw (\x,.8) node {$<$};}
\draw[fill=yellow] (4.9,.8) circle (.25);
\draw (4.9,.8) node {$c_i$};
\foreach \x in {6.8,10.9} {\draw[red] (\x,.8) node {$<$};}
\draw[red] (6.1,.8) node {$c_{i+1}$};
\draw[red] (8.6,.8) node {$\cdots$};
\draw[red] (11.6,.8) node {$c_{n+k}$};
\draw[red] (12.3,.8) node {$\le$};
\draw[red] (12.8,.8) node {$u$};
\draw (0,0) node {$y$};
\draw (.5,0) node {$\le$};
\draw (1,0) node {$d_1$};
\draw (2,0) node {$d_2$};
\draw (3.2,0) node {$\cdots$};
\foreach \x in {1.5,2.5,3.9,4.9} {\draw (\x,0) node {$<$};}
\draw (4.4,0) node {$d_i$};
\foreach \x in {6.3,7.8,9.2,10.6} {\draw[blue] (\x,0) node {$<$};}
\draw[blue] (5.6,0) node {$d_{i+1}$};
\draw[blue] (7,0) node {$d_{i+2}$};
\draw[blue] (8.6,0) node {$\cdots$};
\draw[blue] (9.9,0) node {$d_{n-k}$};
\draw[blue] (11.1,0) node {$v$};
\draw (4.65,.4) node {$\lessup$};
\draw (5.25,.4) node {$\lessdown$};
\end{tikzpicture},
\end{center}
then $\al_r(\cd)$ is the two-rowed array
\begin{center}
\begin{tikzpicture}[scale=.8]
\draw (.5,.8) node {$x$};
\draw (1.5,.8) node {$c_1$};
\draw (2.5,.8) node {$c_2$};
\draw (3.7,.8) node {$\cdots$};
\foreach \x in {1,2,3,4.4,5.4}{\draw (\x,.8) node {$<$};}
\draw[fill=yellow] (4.9,.8) circle (.25);
\draw (4.9,.8) node {$c_i$};
\begin{scope}[shift={(-.5,-.85)}]
\foreach \x in {6.8,10.9} {\draw[red] (\x,.8) node {$<$};}
\draw[red] (6.1,.8) node {$c_{i+1}$};
\draw[red] (8.6,.8) node {$\cdots$};
\draw[red] (11.6,.8) node {$c_{n+k}$};
\draw[red] (12.3,.8) node {$\le$};
\draw[red] (12.8,.8) node {$u$};
\end{scope}
\draw (0,0) node {$y$};
\draw (.5,0) node {$\le$};
\draw (1,0) node {$d_1$};
\draw (2,0) node {$d_2$};
\draw (3.2,0) node {$\cdots$};
\foreach \x in {1.5,2.5,3.9,4.9} {\draw (\x,0) node {$<$};}
\draw (4.4,0) node {$d_i$};
\begin{scope}[shift={(.5,.85)}]
\foreach \x in {6.3,7.8,9.2,10.6} {\draw[blue] (\x,0) node {$<$};}
\draw[blue] (5.6,0) node {$d_{i+1}$};
\draw[blue] (7,0) node {$d_{i+2}$};
\draw[blue] (8.6,0) node {$\cdots$};
\draw[blue] (9.9,0) node {$d_{n-k}$};
\draw[blue] (11.1,0) node {$v$};
\end{scope}
\draw (4.65,.4) node {$\lessup$};
\draw (5.25,.4) node {$\lessdown$};
\end{tikzpicture},
\end{center}
which belongs to $\tra{(x,v)}{[y,u]}_{n\mp k}^{\ge r\uw}$, and every element of this set is obtained in this way.

Similarly, given $\cd\in\tra{(x,u]}{[y,v)}_{n\pm (k-1)}^{\ge r\dw}$ whose $r$th crossing is at $d_i$, if we write $\cd$ as
\begin{center}
\begin{tikzpicture}[scale=.8]
\draw (.5,.8) node {$x$};
\draw (1.5,.8) node {$c_1$};
\draw (2.5,.8) node {$c_2$};
\draw (3.7,.8) node {$\cdots$};
\foreach \x in {1,2,3,4.4,5.8}{\draw (\x,.8) node {$<$};}
\draw (5.1,.8) node {$c_{i-1}$};
\draw[red] (6.3,.8) node {$c_{i}$};
\foreach \x in {6.8,8.2,10.9} {\draw[red] (\x,.8) node {$<$};}
\draw[red] (7.5,.8) node {$c_{i+1}$};
\draw[red] (9.4,.8) node {$\cdots$};
\draw[red] (11.8,.8) node {$c_{n+k-1}$};
\draw[red] (12.7,.8) node {$\le$};
\draw[red] (13.2,.8) node {$u$};
\draw (0,0) node {$y$};
\draw (.5,0) node {$\le$};
\draw (1,0) node {$d_1$};
\draw (2,0) node {$d_2$};
\draw (3.7,0) node {$\cdots$};
\foreach \x in {1.5,2.5,5.3,6.3} {\draw (\x,0) node {$<$};}
\draw[fill=yellow] (5.8,.0) circle (.25);
\draw (5.8,0) node {$d_i$};
\draw[blue] (7,0) node {$d_{i+1}$};
\foreach \x in {7.7,9.2,11} {\draw[blue] (\x,0) node {$<$};}
\draw[blue] (8.6,0) node {$\cdots$};
\draw[blue] (10.1,0) node {$d_{n-k+1}$};
\draw[blue] (11.5,0) node {$v$};
\draw (5.45,.4) node {$\lessdown$};
\draw (6.05,.4) node {$\lessup$};
\end{tikzpicture},
\end{center}
then $\be_r(\cd)$ is the two-rowed array
\begin{center}
\begin{tikzpicture}[scale=.8]
\draw (.5,.8) node {$x$};
\draw (1.5,.8) node {$c_1$};
\draw (2.5,.8) node {$c_2$};
\draw (3.7,.8) node {$\cdots$};
\foreach \x in {1,2,3,4.4,5.8}{\draw (\x,.8) node {$<$};}
\draw (5.1,.8) node {$c_{i-1}$};
\begin{scope}[shift={(.5,-.85)}]
\draw[red] (6.3,.8) node {$c_{i}$};
\foreach \x in {6.8,8.2,10.9} {\draw[red] (\x,.8) node {$<$};}
\draw[red] (7.5,.8) node {$c_{i+1}$};
\draw[red] (9.4,.8) node {$\cdots$};
\draw[red] (11.8,.8) node {$c_{n+k-1}$};
\draw[red] (12.7,.8) node {$\le$};
\draw[red] (13.2,.8) node {$u$};
\end{scope}
\draw (0,0) node {$y$};
\draw (.5,0) node {$\le$};
\draw (1,0) node {$d_1$};
\draw (2,0) node {$d_2$};
\draw (3.7,0) node {$\cdots$};
\foreach \x in {1.5,2.5,5.3,6.3} {\draw (\x,0) node {$<$};}
\draw[fill=yellow] (5.8,.0) circle (.25);
\draw (5.8,0) node {$d_i$};
\begin{scope}[shift={(-.5,.85)}]
\draw[blue] (7,0) node {$d_{i+1}$};
\foreach \x in {7.7,9.2,11} {\draw[blue] (\x,0) node {$<$};}
\draw[blue] (8.6,0) node {$\cdots$};
\draw[blue] (10.1,0) node {$d_{n-k+1}$};
\draw[blue] (11.5,0) node {$v$};
\end{scope}
\draw (5.45,.4) node {$\lessdown$};
\draw (6.05,.4) node {$\lessup$};
\end{tikzpicture},
\end{center}
which is an arbitrary element in $\tra{(x,v)}{[y,u]}_{n\mp k}^{\ge r\dw}$.

It is clear by construction that both $\al_r$ and $\be_r$ preserve the sum of the entries of the arrays.
\end{proof}

\subsection{Proof of Theorem~\ref{thm:line:des}}

For $a,b,r\ge0$ and $\ell\in\Z$, we interpret elements of $\G^{\ge r,\ell}_{a,b}$ as paths in $\P_{(x,y)\to(u,v)}^{\ge r}$, 
where $(x,y)=(\ell,0)$ and $(u,v)=(b+\ell,a)$, using the transformation~\eqref{bij:GtoP}. 
For any $n\ge0$, the subset of paths having $n$ descents is in bijection with the set $\tra{(x,u]}{[y,v)}_{n}^{\ge r}$, using the encoding~\eqref{bij:Pto2ra:cro}.

The proof is divided into nine cases according to whether the paths start below ($0<\ell$, equivalently $x>y$), on ($0=\ell$, equivalently $x=y$), or above ($0>\ell$, equivalently $x<y$) the line being crossed, and whether they end below ($\ell>a-b$, equivalently $u>v$), on ($\ell=a-b$, equivalently $u=v$), or above ($\ell<a-b$, equivalently $u<v$) this line. 
In each case, we determine $G^{\ge r,\ell}_{a,b}(t,q)$ by first using Equation~\eqref{eq:Gto2ra} to rewrite it in terms of two-rowed arrays, then repeatedly applying the maps from Lemma~\ref{lem:al,be} to construct bijections between $\tra{(x,u]}{[y,v)}_{n}^{\ge r}$ and certain sets of two-rowed arrays with no requirement on the number of crossings, and finally using Lemma~\ref{lem:sum}.
The cases are labeled as in~\cite{part1} for consistency, but we will prove them in a slightly different order.

\begin{list1}

\item {\bf Case I:} $0<\ell<a-b$, equivalently $x>y$ and $u<v$.
By Equation~\eqref{eq:x>y,u<v1} with $s=0$, $$\tra{(x,u]}{[y,v)}_{n}^{\ge 2m+1}=\tra{(x,u]}{[y,v)}_{n}^{\ge 2m},$$
and so $G^{\ge 2m+1,\ell}_{a,b}(t,q)=G^{\ge 2m,\ell}_{a,b}(t,q)$.
Using Lemmas~\ref{lem:even=odd}(a) and~\ref{lem:al,be}, noting that the condition $k\ge1$ in the latter holds at each step, we construct
a composition of bijections $\al_1\circ\be_2\circ\dots\circ\al_{2m-1}\circ\be_{2m}$:
\begin{align}\nonumber
\tra{(x,u]}{[y,v)}_{n}^{\ge 2m}=\tra{(x,u]}{[y,v)}_{n}^{\ge 2m\dw}&\overset{\be_{2m}}{\longrightarrow}\tra{(x,v)}{[y,u]}_{n\mp 1}^{\ge 2m\dw}
=\tra{(x,v)}{[y,u]}_{n\mp 1}^{\ge 2m-1\uw}
\overset{\al_{2m-1}}{\longrightarrow}
\tra{(x,u]}{[y,v)}_{n\pm 1}^{\ge 2m-1\uw}=\tra{(x,u]}{[y,v)}_{n\pm 1}^{\ge 2m-2\dw}\\
&\overset{\be_{2m-2}}{\longrightarrow}\cdots\overset{\al_{1}}{\longrightarrow}
\tra{(x,u]}{[y,v)}_{n\pm m}^{\ge 1\uw}=\tra{(x,u]}{[y,v)}_{n\pm m}^{\ge 0\dw}=\tra{(x,u]}{[y,v)}_{n\pm m}.
\label{eq:for_figI}
\end{align}
See Figure~\ref{fig:CaseI} for an example. Since these bijections preserve the sum of the entries of the two-rowed arrays, Equation~\eqref{eq:sum1} gives
\begin{multline}\sum_{\cd\in\tra{(x,u]}{[y,v)}_{n}^{\ge 2m}} q^{\su\bfc+\su\bfd-n(x+y)}
=\sum_{\cd\in\tra{(x,u]}{[y,v)}_{n\pm m}} q^{\su\bfc+\su\bfd-n(x+y)}\\
=q^{n^2+m(m+x-y+1)}\qbin{u-x}{n+m}\qbin{v-y}{n-m}
=q^{n^2+m(m+\ell+1)}\qbin{a}{n-m}\qbin{b}{n+m}.\label{eq:caseI}\end{multline}
Using Equation~\eqref{eq:Gto2ra}, this proves Equation~\eqref{eq:0<l<a-b:des}.

\begin{figure}[htb]
\centering\medskip
$\begin{array}{ccccc}
\tra{(1,7]}{[0,8)}_{4}^{\ge 2}=\tra{(1,7]}{[0,8)}_{4}^{\ge 2\dw}&
\overset{\be_{2}}{\longrightarrow}&
\tra{(1,8)}{[0,7]}_{4\mp 1}^{\ge 2\dw}=\tra{(1,8)}{[0,7]}_{4\mp 1}^{\ge 1\uw}&
\overset{\al_{1}}{\longrightarrow}&
\tra{(1,7]}{[0,8)}_{4\pm 1}^{\ge 1\uw}=\tra{(1,7]}{[0,8)}_{4\pm 1}\medskip\\
\begin{tikzpicture}[scale=0.7]
\uwcrossingc{2}{olive}
\dwcrossingfillc{4}{violet}
\uwcrossingc{4}{orange}
\drawarray{{1,2,3,4,6,7}}{{0,0,1,4,5,8}}
\ineq{5}{5}
\end{tikzpicture}
&&
\begin{tikzpicture}[scale=0.7]
\uwcrossingfillc{2}{olive}
\dwcrossingc{4}{violet}
\drawarray{{1,2,3,4,8}}{{0,0,1,4,5,6,7}}
\ineqx{4}{6}
\end{tikzpicture}
&&
\begin{tikzpicture}[scale=0.7]
\uwcrossingc{2}{olive}
\drawarray{{1,2,3,4,5,6,7}}{{0,0,1,4,8}}
\ineq{6}{4}
\end{tikzpicture}
\end{array}$
\caption{An example of the bijection~\eqref{eq:for_figI}, where $(x,y)=(1,0)$, $(u,v)=(7,8)$, $m=1$ and $n=4$.}
\label{fig:CaseI}
\end{figure}

\item {\bf Case II:} $0>\ell>a-b$, equivalently $x<y$ and $u>v$. Similarly to Case~I, the equality $G^{\ge 2m+1,\ell}_{a,b}(t,q)=G^{\ge 2m,\ell}_{a,b}(t,q)$ follows now from Equation~\eqref{eq:x<y,u>v1} with $s=0$. 
Again Lemmas~\ref{lem:even=odd}(d) and~\ref{lem:al,be}, noting that the condition $k\le0$ holds at each step,
allow us to build a sequence of bijections $\be_1\circ\al_2\circ\dots\circ\be_{2m-1}\circ\al_{2m}$:
\begin{align*}\tra{(x,u]}{[y,v)}_{n}^{\ge 2m}=\tra{(x,u]}{[y,v)}_{n}^{\ge 2m\uw}&\overset{\al_{2m}}{\longrightarrow}\tra{(x,v)}{[y,u]}_{n}^{\ge 2m\uw}
=\tra{(x,v)}{[y,u]}_{n}^{\ge 2m-1\dw}
\overset{\be_{2m-1}}{\longrightarrow}
\tra{(x,u]}{[y,v)}_{n\mp 1}^{\ge 2m-1\dw}=\tra{(x,u]}{[y,v)}_{n\mp 1}^{\ge 2m-2\uw}\\
&\overset{\al_{2m-2}}{\longrightarrow}\cdots\overset{\be_{1}}{\longrightarrow}
\tra{(x,u]}{[y,v)}_{n\mp m}^{\ge 1\dw}=\tra{(x,u]}{[y,v)}_{n\mp m}^{\ge 0\uw}=\tra{(x,u]}{[y,v)}_{n\mp m}.
\end{align*}
Then, by Equation~\eqref{eq:sum1},
\begin{multline}\sum_{\cd\in\tra{(x,u]}{[y,v)}_{n}^{\ge 2m}} q^{\su\bfc+\su\bfd-n(x+y)}
=\sum_{\cd\in\tra{(x,u]}{[y,v)}_{n\mp m}} q^{\su\bfc+\su\bfd-n(x+y)}\\
=q^{n^2-m(-m+x-y+1)}\qbin{u-x}{n-m}\qbin{v-y}{n+m}
=q^{n^2+m(m-\ell-1)}\qbin{a}{n+m}\qbin{b}{n-m},\label{eq:caseII}
\end{multline}
proving Equation~\eqref{eq:0>l>a-b:des}.

\item {\bf Case III:} $0>\ell<a-b$, equivalently $x<y$ and $u<v$.
The equality $G^{\ge 2m+2,\ell}_{a,b}(t,q)=G^{\ge 2m+1,\ell}_{a,b}(t,q)$ follows now from Equation~\eqref{eq:x<y,u<v1} with $s=0$. 
Lemmas~\ref{lem:even=odd}(c) and~\ref{lem:al,be} produce a sequence of bijections $\be_1\circ\al_2\circ\dots\circ\be_{2m+1}$:
\begin{align*}\tra{(x,u]}{[y,v)}_{n}^{\ge 2m+1}=\tra{(x,u]}{[y,v)}_{n}^{\ge 2m+1\dw}&
\overset{\be_{2m+1}}{\longrightarrow}
\tra{(x,v)}{[y,u]}_{n\mp 1}^{\ge 2m+1\dw}=\tra{(x,v)}{[y,u]}_{n\mp 1}^{\ge 2m\uw}
\overset{\al_{2m}}{\longrightarrow}
\tra{(x,u]}{[y,v)}_{n\pm 1}^{\ge 2m\uw}=\tra{(x,u]}{[y,v)}_{n\pm 1}^{\ge 2m-1\dw}\\
&\overset{\be_{2m-1}}{\longrightarrow}\cdots\overset{\be_{1}}{\longrightarrow}
\tra{(x,v)}{[y,u]}_{n\mp (m+1)}^{\ge 1\dw}=\tra{(x,v)}{[y,u]}_{n\mp (m+1)}^{\ge 0\uw}=\tra{(x,v)}{[y,u]}_{n\mp (m+1)}.
\end{align*}
By Equation~\eqref{eq:sum2},
\begin{multline}\sum_{\cd\in\tra{(x,u]}{[y,v)}_{n}^{\ge 2m+1}} q^{\su\bfc+\su\bfd-n(x+y)}
=\sum_{\cd\in\tra{(x,v)}{[y,u]}_{n\mp (m+1)}} q^{\su\bfc+\su\bfd-n(x+y)}\\
=q^{n^2-(m+1)(-m+x-y)}\qbin{v-x-1}{n-m-1}\qbin{u-y+1}{n+m+1}
=q^{n^2+(m+1)(m-\ell)}\qbin{a-\ell-1}{n-m-1}\qbin{b+\ell+1}{n+m+1},\label{eq:caseIII}\end{multline}
proving Equation~\eqref{eq:0>l<a-b:des}.

\item {\bf Case IV:} $0<\ell>a-b$, equivalently $x>y$ and $u>v$.
Here $G^{\ge 2m+2,\ell}_{a,b}(t,q)=G^{\ge 2m+1,\ell}_{a,b}(t,q)$ because of Equation~\eqref{eq:x>y,u>v1} with $s=0$. 
Lemmas~\ref{lem:even=odd}(b) and~\ref{lem:al,be} give a sequence of bijections $\al_1\circ\be_2\circ\dots\circ\al_{2m+1}$:
\begin{align*}\tra{(x,u]}{[y,v)}_{n}^{\ge 2m+1}=\tra{(x,u]}{[y,v)}_{n}^{\ge 2m+1\uw}&
\overset{\al_{2m+1}}{\longrightarrow}
\tra{(x,v)}{[y,u]}_{n}^{\ge 2m+1\uw}=\tra{(x,v)}{[y,u]}_{n}^{\ge 2m\dw}
\overset{\be_{2m}}{\longrightarrow}
\tra{(x,u]}{[y,v)}_{n\mp 1}^{\ge 2m\dw}=\tra{(x,u]}{[y,v)}_{n\mp 1}^{\ge 2m-1\uw}\\
&\overset{\al_{2m-1}}{\longrightarrow}\cdots\overset{\al_{1}}{\longrightarrow}
\tra{(x,v)}{[y,u]}_{n\pm m}^{\ge 1\uw}=\tra{(x,v)}{[y,u]}_{n\pm m}^{\ge 0\dw}=\tra{(x,v)}{[y,u]}_{n\pm m}.
\end{align*}
By Equation~\eqref{eq:sum2},
\begin{multline}\sum_{\cd\in\tra{(x,u]}{[y,v)}_{n}^{\ge 2m+1}} q^{\su\bfc+\su\bfd-n(x+y)}
=\sum_{\cd\in\tra{(x,v)}{[y,u]}_{n\pm m}} q^{\su\bfc+\su\bfd-n(x+y)}\\
=q^{n^2+m(m+x-y+1)}\qbin{v-x-1}{n+m}\qbin{u-y+1}{n-m}
=q^{n^2+m(m+\ell+1)}\qbin{a-\ell-1}{n+m}\qbin{b+\ell+1}{n-m},\label{eq:caseIV}\end{multline}
proving Equation~\eqref{eq:0<l>a-b:des}.

\item {\bf Case VII:} $0<\ell=a-b$, equivalently $x>y$ and $u=v$. In this case, the parity of the total number of crossings is not
forced by the endpoints, so we consider the cases $r=2m$ and $r=2m+1$ separately. The case $r=2m$ is proved like Case~I, 
constructing a sequence of bijections $\al_1\circ\be_2\circ\dots\circ\al_{2m-1}\circ\be_{2m}$:
\begin{equation}\label{eq:bijVIIeven}
\tra{(x,u]}{[y,v)}_{n}^{\ge 2m}=\tra{(x,u]}{[y,v)}_{n}^{\ge 2m\dw}
\longrightarrow
\tra{(x,u]}{[y,v)}_{n\pm m}^{\ge 1\uw}=\tra{(x,u]}{[y,v)}_{n\pm m}^{\ge 0\dw}=\tra{(x,u]}{[y,v)}_{n\pm m},
\end{equation}
where we use Lemma~\ref{lem:first_crossing} for the left equality, and Lemmas~\ref{lem:even=odd}(e) and~\ref{lem:al,be} to compose the bijections. Equation~\eqref{eq:0<l=a-b:des-even} now follows using Equation~\eqref{eq:caseI} again.

The case $r=2m+1$ is proved like Case~IV, 
constructing a sequence of bijections $\al_1\circ\be_2\circ\dots\circ\al_{2m+1}$:
\begin{equation}\label{eq:bijVIIodd}
\tra{(x,u]}{[y,v)}_{n}^{\ge 2m+1}=\tra{(x,u]}{[y,v)}_{n}^{\ge 2m+1\uw}
\longrightarrow
\tra{(x,v)}{[y,u]}_{n\pm m}^{\ge 1\uw}=\tra{(x,v)}{[y,u]}_{n\pm m}^{\ge 0\dw}=\tra{(x,v)}{[y,u]}_{n\pm m}.
\end{equation}
Now we use Equation~\eqref{eq:caseIV} and the fact that $\ell=a-b$ to prove Equation~\eqref{eq:0<l=a-b:des-odd}.

\item {\bf Case VIII:} $0>\ell=a-b$, equivalently $x<y$ and $u=v$. This case is analogous to Case~VII. When $r=2m$, we use the same sequence bijections as in Case~II,
\begin{equation}\label{eq:bijVIIIeven}
\be_1\circ\al_2\circ\dots\circ\be_{2m-1}\circ\al_{2m}:\tra{(x,u]}{[y,v)}_{n}^{\ge 2m}\longrightarrow
\tra{(x,u]}{[y,v)}_{n\mp m},
\end{equation}
using Lemma~\ref{lem:even=odd}(f).
Equation~\eqref{eq:0>l=a-b:des-even} now follows from Equation~\eqref{eq:caseII}.

When $r=2m+1$, we use the same sequence of bijections as in Case~III,
\begin{equation}\label{eq:bijVIIIodd}\be_1\circ\al_2\circ\dots\circ\be_{2m+1}:
\tra{(x,u]}{[y,v)}_{n}^{\ge 2m+1}\longrightarrow\tra{(x,v)}{[y,u]}_{n\mp (m+1)}.
\end{equation}
Equation~\eqref{eq:0>l=a-b:des-odd} follows from Equation~\eqref{eq:caseIII} after the substitution $\ell=a-b$.

\item {\bf Case V:} $0=\ell<a-b$, equivalently $x=y$ and $u<v$.
We will reduce this case to Case~VIII by applying an involution $\nu$ on two-rowed arrays that changes the sign of each entry, reverses each row (so that the negated entries increase from left to right), and swaps the top and the bottom rows. The map $\nu$ restricts to bijections
\begin{equation}\label{def:nu}\tra{(x,u]}{[y,v)}_{n\pm k}\overset{\nu}{\longleftrightarrow}\tra{(-v,-y]}{[-u,-x)}_{n\mp k},\qquad
\tra{(x,v)}{[y,u]}_{n\pm k}\overset{\nu}{\longleftrightarrow}\tra{[-u,-y]}{(-v,-x)}_{n\mp k}\end{equation}
for any $k\in\Z$. Additionally, in the case $k=0$, it restricts to a bijection
\begin{equation}\label{eq:nu}\tra{(x,u]}{[y,v)}_{n}^{\ge r}\overset{\nu}{\longleftrightarrow}\tra{(-v,-y]}{[-u,-x)}_{n}^{\ge r},\end{equation}
since it preserves the number of crossings; specifically, upward crossings turn into downward crossings, and vice-versa. Indeed, the two-rowed array
$$\begin{array}{cccccccccccc}
&x&<&c_1&<&c_2&<&\dots&<&c_{n}&\le&u\\
y&\le &d_1&<&d_2&<&\dots&<&d_{n}&<&v&
\end{array}$$
is mapped by $\nu$  to
$$\begin{array}{cccccccccccc}
&-v&<&-d_n&<&-d_{n-1}&<&\dots&<&-d_1&\le&-y\\
-u&\le &-c_n&<&-c_{n-1}&<&\dots&<&-c_1&<&-x&
\end{array}.$$
Thus, the first array has an upward crossing at $c_i$ if and only if the second one has a downward crossing at $-c_i$, since condition~\eqref{eq:upward} is equivalent to $-d_{i+1}<-c_i<-d_i$, and similarly for the other type of crossing. In terms of the corresponding lattice paths given by the encoding~\eqref{bij:Pto2ra:cro}, the involution $\nu$ translates to a reflection along the line $x+y=0$.

The conditions $x=y$ and $u<v$ are equivalent to $-v<-u$ and $-y=-x$, so we can apply the bijections from Case~VIII to the set on the right-hand side of~\eqref{eq:nu}. When $r=2m$, Equation~\eqref{eq:bijVIIIeven} gives a bijection 
$$\be_1\circ\al_2\circ\dots\circ\be_{2m-1}\circ\al_{2m}:\tra{(-v,-y]}{[-u,-x)}_{n}^{\ge 2m}\longrightarrow
\tra{(-v,-y]}{[-u,-x)}_{n\mp m}.$$
Conjugating by $\nu$, we get a bijection
$$\nu\circ\be_1\circ\al_2\circ\dots\circ\be_{2m-1}\circ\al_{2m}\circ\nu:\tra{(x,u]}{[y,v)}_{n}^{\ge 2m}\longrightarrow
\tra{(x,u]}{[y,v)}_{n\pm m}$$
that preserves the sum of the entries. Using Equation~\eqref{eq:caseI} with $\ell=0$, we deduce Equation~\eqref{eq:0=l<a-b:des-even}.

When $r=2m+1$, Equation~\eqref{eq:bijVIIIodd} gives a bijection 
$$\be_1\circ\al_2\circ\dots\circ\be_{2m+1}:
\tra{(-v,-y]}{[-u,-x)}_{n}^{\ge 2m+1}\longrightarrow\tra{(-v,-x)}{[-u,-y]}_{n\mp (m+1)},$$
and conjugating by $\nu$ we get
$$\nu\circ\be_1\circ\al_2\circ\dots\circ\be_{2m+1}\circ\nu:\tra{(x,u]}{[u,v)}_{n}^{\ge 2m}\longrightarrow
\tra{[y,u]}{(x,v)}_{n\pm (m+1)}.$$
Swapping the top and bottom rows and using Equation~\eqref{eq:caseIII} with $\ell=0$, we deduce Equation~\eqref{eq:0=l<a-b:des-odd} .

\item {\bf Case VI:} $0=\ell>a-b$, equivalently $x=y$ and $u>v$. By applying the map $\nu$, this case reduces to Case~VII, since the conditions $x=y$ and $u>v$ are equivalent to $-v>-u$ and $-y=-x$.
When $r=2m$, conjugating the bijection~\eqref{eq:bijVIIeven} with $\nu$ gives a bijection 
$$\nu\circ\al_1\circ\be_2\circ\dots\circ\al_{2m-1}\circ\be_{2m}\circ\nu:\tra{(x,u]}{[y,v)}_{n}^{\ge 2m}\longrightarrow
\tra{(x,u]}{[y,v)}_{n\mp m}.$$
Using Equation~\eqref{eq:caseII} with $\ell=0$, we deduce Equation~\eqref{eq:0=l>a-b:des-even}.

When $r=2m+1$, conjugating the bijection~\eqref{eq:bijVIIodd} with $\nu$ gives a bijection 
$$\nu\circ\al_1\circ\be_2\circ\dots\circ\al_{2m+1}\circ\nu:
\tra{(x,u]}{[y,v)}_{n}^{\ge 2m+1}
\longrightarrow
\tra{[y,u]}{(x,v)}_{n\mp m}.$$
Swapping the top and bottom rows and using Equation~\eqref{eq:caseIV} with $\ell=0$,
we deduce Equation~\eqref{eq:0=l>a-b:des-odd}.

\item {\bf Case IX:} $0=\ell=a-b$, equivalently $x=y$ and $u=v$.
We consider two cases according to the first step of the path.
Via the bijection~\eqref{bij:Pto2ra:cro}, paths in the left-hand side starting with an $N$ are encoded by two-rowed arrays $\cd\in\tra{(x,u]}{[y,v)}_{n}^{\ge r}$ with $y<d_1$; equivalently, by 
$\cd\in\tra{(x,u]}{[y+1,v)}_{n}^{\ge r}$. Note that replacing the lower bound $y$ with $y+1$ does not affect the number of crossings of the array, since $\cd$ cannot have a crossing at $c_0$ in either case.
Since $x<y+1$, the conditions in Case~VIII hold with $y+1$ playing the role of $y$. Equation~\eqref{eq:bijVIIIeven} gives a bijection
$$\be_1\circ\al_2\circ\dots\circ\be_{2m-1}\circ\al_{2m}:\tra{(x,u]}{[y+1,v)}_{n}^{\ge 2m}\longrightarrow\tra{(x,u]}{[y+1,v)}_{n\mp m},$$
and using Equation~\eqref{eq:sum1}, it follows that
\begin{multline}\sum_{\cd\in\tra{(x,u]}{[y+1,v)}_{n}^{\ge 2m}} q^{\su\bfc+\su\bfd-n(x+y)}
=q^n\sum_{\cd\in\tra{(x,u]}{[y+1,v)}_{n\mp m}} q^{\su\bfc+\su\bfd-n(x+y+1)}\\
=q^n q^{n^2-m(-m+x-y)}\qbin{u-x}{n-m}\qbin{v-y-1}{n+m}
=q^{n^2+n+m^2}\qbin{a}{n-m}\qbin{a-1}{n+m}.\label{eq:Neven}
\end{multline}
Similarly, Equation~\eqref{eq:bijVIIIodd} gives a bijection
$$\be_1\circ\al_2\circ\dots\circ\be_{2m+1}:\tra{(x,u]}{[y+1,v)}_{n}^{\ge 2m+1}\longrightarrow\tra{(x,v)}{[y+1,u]}_{n\mp (m+1)},$$
and Equation~\eqref{eq:sum2} implies that
\begin{multline}\sum_{\cd\in\tra{(x,u]}{[y+1,v)}_{n}^{\ge 2m+1}} q^{\su\bfc+\su\bfd-n(x+y)}
=q^n\sum_{\cd\in\tra{(x,v)}{[y+1,u]}_{n\mp (m+1)}} q^{\su\bfc+\su\bfd-n(x+y+1)}\\
=q^n q^{n^2-(m+1)(-m-1+x-y)}\qbin{v-x-1}{n-m-1}\qbin{u-y}{n+m+1}
=q^{n^2+n+(m+1)^2}\qbin{a-1}{n-m-1}\qbin{a}{n+m+1}.\label{eq:Nodd}\end{multline}

On the other hand, paths in the left-hand side of~\eqref{bij:Pto2ra:cro} starting with an $E$ are encoded by two-rowed arrays $\cd\in\tra{(x,u]}{[y,v)}_{n}^{\ge r}$ with $y=d_1$. Let us use the notation
$\tra{(x,u]_n}{\llbracket y,v)_n}^{\ge r}$ for such arrays, where the double bracket indicates that the first element in the bottom row is forced to equal its lower bound.
By Lemma~\ref{lem:even=odd}(g), we can use the same bijections as in Case~VII, noting that the condition $y=d_1$ is preserved when applying the maps from Lemma~\ref{lem:al,be}. For $r=2m$, we get a bijection
$$\al_1\circ\be_2\circ\dots\circ\al_{2m-1}\circ\be_{2m}:\tra{(x,u]}{\llbracket y,v)}_{n}^{\ge 2m}\longrightarrow
\tra{(x,u]}{\llbracket y,v)}_{n\pm m},
$$
and Equation~\eqref{eq:sum1} implies that
\begin{align}\nonumber \sum_{\cd\in\tra{(x,u]}{\llbracket y,v)}_{n}^{\ge 2m}} & q^{\su\bfc+\su\bfd-n(x+y)}
=\sum_{\cd\in\tra{(x,u]}{\llbracket y,v)}_{n\pm m}} q^{\su\bfc+\su\bfd-n(x+y)}\\ \nonumber
&=\sum_{\cd\in\tra{(x,u]}{[y,v)}_{n\pm m}} q^{\su\bfc+\su\bfd-n(x+y)}-
q^n\sum_{\cd\in\tra{(x,u]}{[y+1,v)}_{n\pm m}} q^{\su\bfc+\su\bfd-n(x+y+1)}\\ \nonumber
&=q^{n^2+m(m+x-y+1)}\qbin{u-x}{n+m}\qbin{v-y}{n-m}
-q^n q^{n^2+m(m+x-y)}\qbin{u-x}{n+m}\qbin{v-y-1}{n-m}\\ \nonumber
&=q^{n^2+m(m+1)}\qbin{a}{n+m}\qbin{a}{n-m}
-q^n q^{n^2+m^2}\qbin{a}{n+m}\qbin{a-1}{n-m}\\ \nonumber
&=q^{n^2+m(m+1)}\qbin{a}{n+m}\left(\qbin{a}{n-m}-q^{n-m}\qbin{a-1}{n-m}\right)\\
&=q^{n^2+m(m+1)}\qbin{a}{n+m}\qbin{a-1}{n-m-1}.
\label{eq:Eeven}
\end{align}
Similarly, for $r=2m+1$, we get a bijection
\begin{equation}\label{eq:for_figIX}
\al_1\circ\be_2\circ\dots\circ\al_{2m+1}:
\tra{(x,u]}{\llbracket y,v)}_{n}^{\ge 2m+1}
\longrightarrow
\tra{(x,v)}{\llbracket y,u]}_{n\pm m}
\end{equation}
(see the example in Figure~\ref{fig:CaseIX}), and Equation~\eqref{eq:sum2} implies that
\begin{align}\nonumber &\sum_{\cd\in\tra{(x,u]}{\llbracket y,v)}_{n}^{\ge 2m+1}} q^{\su\bfc+\su\bfd-n(x+y)}
=\sum_{\cd\in\tra{(x,v)}{\llbracket y,u]}_{n\pm m}} q^{\su\bfc+\su\bfd-n(x+y)}\\ \nonumber
&=\sum_{\cd\in\tra{(x,v)}{[y,u]}_{n\pm m}} q^{\su\bfc+\su\bfd-n(x+y)}-
q^n\sum_{\cd\in\tra{(x,v)}{[y+1,u]}_{n\pm m}} q^{\su\bfc+\su\bfd-n(x+y+1)}\\ \nonumber
&=q^{n^2+m(m+x-y+1)}\qbin{v-x-1}{n+m}\qbin{u-y+1}{n-m}
-q^n q^{n^2+m(m+x-y)}\qbin{v-x-1}{n+m}\qbin{u-y}{n-m}\\ \nonumber
&=q^{n^2+m(m+1)}\qbin{a-1}{n+m}\qbin{a+1}{n-m}
-q^n q^{n^2+m^2}\qbin{a-1}{n+m}\qbin{a}{n-m}\\
\nonumber &=q^{n^2+m(m+1)}\qbin{a-1}{n+m}\left(\qbin{a+1}{n-m}
-q^{n-m}\qbin{a}{n-m}\right)\\
&=q^{n^2+m(m+1)}\qbin{a-1}{n+m}\qbin{a}{n-m-1}.
\label{eq:Eodd}
\end{align}

\begin{figure}[htb]
\centering\medskip
$\begin{array}{ccccccc}
\tra{(0,7]}{\llbracket 0,7)}_{3}^{\ge 3}=\tra{(0,7]}{\llbracket 0,7)}_{3}^{\ge 3\uw}&
\overset{\al_{3}}{\rightarrow}&
\tra{(0,7)}{\llbracket 0,7]}_{3}^{\ge 3\uw}=\tra{(0,7)}{\llbracket 0,7]}_{3}^{\ge 2\dw}&
\overset{\al_{1}}{\rightarrow}&
\tra{(0,7]}{\llbracket 0,7)}_{3\mp 1}^{\ge 2\dw}=\tra{(0,7]}{\llbracket 0,7)}_{3\mp 1}^{\ge 1\uw}&
\overset{\al_{1}}{\rightarrow}&
\tra{(0,7)}{\llbracket 0,7]}_{3\pm 1}^{\ge 1\uw}=\tra{(0,7)}{\llbracket 0,7]}_{3\pm 1}\medskip\\

\begin{tikzpicture}[scale=0.7]
\uwcrossing{1}
\dwcrossingc{3}{violet}
\uwcrossingfillc{3}{orange}
\drawarray{{0,2,3,6,7}}{{0,0,3,5,7}}
\ineqeq{4}{4}
\end{tikzpicture}
&&
\begin{tikzpicture}[scale=0.7]
\uwcrossing{1}
\dwcrossingfillc{3}{violet}
\uwcrossingc{3}{orange}
\drawarray{{0,2,3,6,7}}{{0,0,3,5,7}}
\ineqxeq{4}{4}
\end{tikzpicture}
&&
\begin{tikzpicture}[scale=0.7]
\uwcrossingfill{1}
\dwcrossingc{3}{violet}
\drawarray{{0,2,3,7}}{{0,0,3,5,6,7}}
\ineqeq{3}{5}
\end{tikzpicture}
&&
\begin{tikzpicture}[scale=0.7]
\uwcrossing{1}
\drawarray{{0,2,3,5,6,7}}{{0,0,3,7}}
\ineqxeq{5}{3}
\end{tikzpicture}
\end{array}$
\caption{An example of the bijection~\eqref{eq:for_figIX}, where $(x,y)=(0,0)$, $(u,v)=(7,7)$, $m=1$ and $n=3$.}
\label{fig:CaseIX}
\end{figure}

Adding Equations~\eqref{eq:Neven} and~\eqref{eq:Eeven} to account for all paths with at least $2m$ crossings, we get
\begin{align*}\sum_{\cd\in\tra{(x,u]}{[y,v)}_{n}^{\ge 2m}} q^{\su\bfc+\su\bfd-n(x+y)}&=
q^{n^2+m(m+1)}\left(q^{n-m}\qbin{a}{n-m}\qbin{a-1}{n+m}+\qbin{a}{n+m}\qbin{a-1}{n-m-1}\right)\\
&=q^{n^2+m(m+1)}\qbin{a}{n+m}\qbin{a}{n-m}\frac{q^{n-m}(1-q^{a-n-m})+1-q^{n-m}}{1-q^a}\\
&=q^{n^2+m(m+1)}\frac{1-q^{a-2m}}{1-q^a}\qbin{a}{n+m}\qbin{a}{n-m},
\end{align*}
which proves Equation~\eqref{eq:0=l=a-b:des-even}.
 
Similarly, adding Equations~\eqref{eq:Nodd} and~\eqref{eq:Eodd} to account for all paths with at least $2m+1$ crossings, we get
\begin{align*}\sum_{\cd\in\tra{(x,u]}{[y,v)}_{n}^{\ge 2m+1}} &  q^{\su\bfc+\su\bfd-n(x+y)}\\
&=q^{n^2+m(m+1)}\left(q^{n+m+1}\qbin{a-1}{n-m-1}\qbin{a}{n+m+1}+\qbin{a-1}{n+m}\qbin{a}{n-m-1}\right)\\
&=q^{n^2+m(m+1)}\qbin{a}{n+m+1}\qbin{a}{n-m-1}\frac{q^{n+m+1}(1-q^{a-n+m+1})+1-q^{n+m+1}}{1-q^a}\\
&=q^{n^2+m(m+1)}\frac{1-q^{a+2(m+1)}}{1-q^a}\qbin{a}{n+m+1}\qbin{a}{n-m-1},
\end{align*}
which proves Equation~\eqref{eq:0=l=a-b:des-odd}.
\end{list1}

\section{Proofs for paths crossing each other} \label{sec:proofs-pairs}

In this section we prove Theorem~\ref{thm:pairs:des}. Using the bijection~\eqref{bij:Pto2ra}, we will encode pairs of lattice paths as pairs of two-rowed arrays, describe crossings in this setting, and then define certain bijections on pairs of arrays.

\subsection{Pairs of two-rowed arrays}

Throughout the section, let $x_1,y_1,x_2,y_2,u_1,v_1,u_2,v_2,k\in\Z$ and $n\ge0$, let $\{\ci,\bu\}=\{1,2\}$, and let
$A_1=(x_1,y_1)$, $A_2=(x_2,y_2)$, $B_1=(u_1,v_1)$ and $B_2=(u_2,v_2)$.
We consider certain sets of pairs of two-rowed arrays, for which we introduce the notation
\begin{equation}\label{def:ptra}
\ptra{(x_1,u_\ci]}{[y_1,v_\ci)}{(x_2,u_\bu]}{[y_2,v_\bu)}_{n,k}=\bigcup_{n_1+n_2=n}\tra{(x_1,u_\ci]_{n_1}}{[y_1,v_\ci)_{n_1+k}}\times\tra{(x_2,u_\bu]_{n_2}}{[y_2,v_\bu)_{n_2-k}}.
\end{equation}
Elements of such sets are denoted by placing two two-rowed arrays side by side, namely
$\cdef$, where $\cd\in\tra{(x_1,u_\ci]_{n_1}}{[y_1,v_\ci)_{n_1+k}}$ and $\ef\in\tra{(x_2,u_\bu]_{n_2}}{[y_2,v_\bu)_{n_2-k}}$, with $n_1+n_2=n$. When $k=0$, the subscript $k$ will be often omitted.

Applying the encoding~\eqref{bij:Pto2ra} to each component of a pair of paths, we get a bijection 
\begin{equation}\label{bij:PQto2ra}
\{(P,Q)\in\P_{A_1\to B_\ci}\times \P_{A_2\to B_\bu}:\des(P)+\des(Q)=n\}\to\ptra{(x_1,u_\ci]}{[y_1,v_\ci)}{(x_2,u_\bu]}{[y_2,v_\bu)}_{n}.
\end{equation}
See Figure~\ref{fig:pair-arrays} for an example.
Suppose that condition~\eqref{condition} holds, and let $z=x_1+y_1=x_2+y_2$. If $(P,Q)$ is encoded by $\cdef$, then
\begin{equation}\label{eq:majPQ2ra}\maj(P)+\maj(Q)=\sum_{i=1}^{n_1} (c_i+d_i-x_1-y_1)
+\sum_{j=1}^{n_2} (e_j+f_j-x_2-y_2)
=\su\bfc+\su\bfd+\su\bfe+\su\bff-nz.
\end{equation}
Next we adapt Lemma~\ref{lem:sum} to enumerate the sets~\eqref{def:ptra} with respect to this statistic.

\begin{figure}[htb]
\centering
\begin{tikzpicture}[scale=0.6]
\draw (-.3,0)--(10.3,0);
\draw (0,-.3)--(0,8.3);
\markx{0}{2}
\markx{3}{2}
\markx{6}{4}
\markx{10}{7}
\markx{2}{0}
\markx{4}{5}
\markx{7}{6}
\markx{8}{7}
\markx{8}{8}
\diamant{3}{2}
\draw[teal,below right] (3,2) node[scale=.8] {$(3,2)$};
\diamant{6}{4}
\draw[teal,below right] (6,4) node[scale=.8] {$(6,4)$};
\diamant{4}{5}
\draw[teal,below right] (4,5) node[scale=.8] {$(4,5)$};
\diamant{7}{6}
\draw[teal,below right] (7,6) node[scale=.8] {$(7,6)$};
\diamant{8}{7}
\draw[teal,below right] (8,7) node[scale=.8] {$(8,7)$};
    \draw[red,very thick,fill](0,2) \start\E\E\E\N\N\E\E\E\N\N\N\E\E\E\E;
    \draw[blue,dashed,very thick,fill](2,0) \start\N\N\E\N\N\N\E\N\E\E\E\N\E\N;
\crossingc{3}{4}{olive};\crossingc{6}{6}{violet};\crossingc{8}{7}{orange};	
\draw[olive,above left] (3,4) node[scale=.8] {$(3,4)$};
\draw[violet,above left] (6,6) node[scale=.8] {$(6,6)$};
\draw[orange,above left] (8,7) node[scale=.8] {$(8,7)$};
	\draw (0,2) node[below right=-.5mm] {$A_1$};
	\draw (2,0) node[above left=-.5mm] {$A_2$};     
	\draw (8,8) node[right] {$B_1$};
	\draw (10,7) node[right] {$B_2$};     
	    \draw[red] (1,2.5) node {$P$} ;
    \draw[blue] (2.5,1) node {$Q$} ;

\draw[<->] (10.75,4)--(11.5,4);
\begin{scope}[shift={(12.5,3.5)},scale=1.17]
\drawarray{{0,3,6,10}}{{2,2,4,7}}
\ineq{3}{3}
\dwcrossingc{2}{olive}
\uwcrossingc{2}{violet}
\dwcrossingc{3 }{orange}
\draw (4,-.2)--(4,1);
\begin{scope}[shift={(4.5,0)}]
\drawarray{{2,3,4,7,8,8}}{{0,2,5,6,7,8}}
\ineq{5}{5}
\uwcrossingc{1}{olive}
\dwcrossingc{3}{violet}
\uwcrossingc{4}{orange}
\end{scope}
\draw (10.5,-1.2) node[left] {$\in\ptra{(0,10]}{[2,7)}{(2,8]}{[0,8)}_{6}$};
\end{scope}

\end{tikzpicture}
\caption{The encoding~\eqref{bij:PQto2ra} applied to the pair of paths from Figure~\ref{fig:pair}, and the resulting pair of two-rowed arrays, where the crossings have been circled.
}
\label{fig:pair-arrays}
\end{figure}
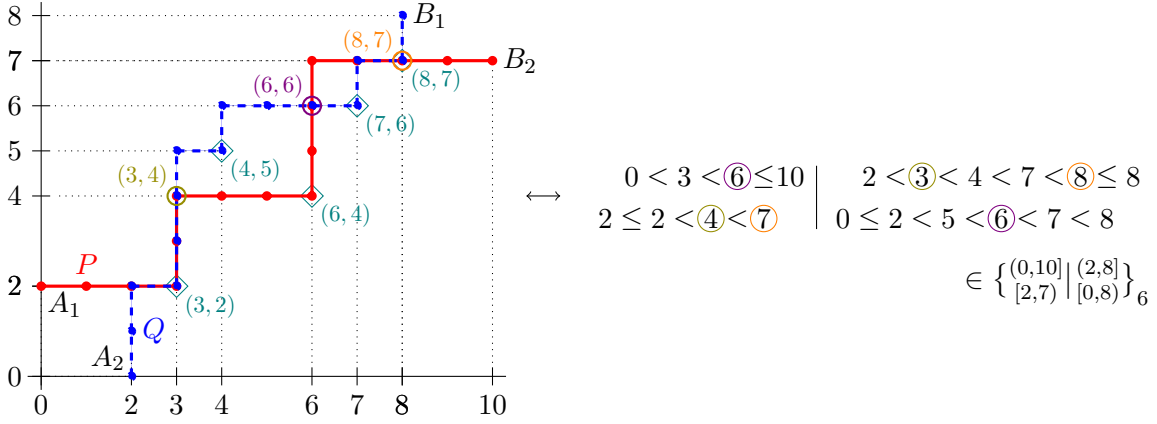

\begin{lemma}\label{lem:sum-pairs}
Suppose that $z=x_1+y_1=x_2+y_2$. We have
\begin{align*}
&\sum_{n\ge0}t^n\sum_{\cdef\in\ptra{(x_1,u_\ci]}{[y_1,v_\ci)}{(x_2,u_\bu]}{[y_2,v_\bu)}_{n,k}} q^{\su\bfc+\su\bfd+\su\bfe+\su\bff-nz}\\
&=q^{k(k+x_2-x_1)} \left(\sum_{n_1\ge0}t^{n_1} q^{n_1(n_1+k)}\qbin{u_\ci-x_1}{n_1}\qbin{v_\ci-y_1}{n_1+k}\right)
\left(\sum_{n_2\ge0}t^{n_2} q^{n_2(n_2-k)}\qbin{u_\bu-x_2}{n_2}\qbin{v_\bu-y_2}{n_2-k}\right)\\
&=f_{k,A_1,A_2,B_\ci,B_\bu}(t,q).
\end{align*}
\end{lemma}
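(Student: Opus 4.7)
The plan is to exploit the product structure of~\eqref{def:ptra}, which expresses the pair-of-arrays set as a disjoint union of Cartesian products, to reduce the double sum on the left-hand side to a product of two single sums. These can then be evaluated directly using Lemma~\ref{lem:sum}(i).

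Concretely, I would substitute~\eqref{def:ptra}, split $t^n = t^{n_1}t^{n_2}$ according to $n = n_1 + n_2$, and split the $q$-exponent using $nz = n_1(x_1+y_1) + n_2(x_2+y_2)$ (this is exactly where condition~\eqref{condition} enters). The left-hand side then factors as
\[
\left(\sum_{n_1\ge 0} t^{n_1} \sum_{\cd \in \tra{(x_1,u_\ci]_{n_1}}{[y_1,v_\ci)_{n_1+k}}} q^{\su\bfc + \su\bfd - n_1(x_1+y_1)}\right)\left(\sum_{n_2\ge 0} t^{n_2} \sum_{\ef \in \tra{(x_2,u_\bu]_{n_2}}{[y_2,v_\bu)_{n_2-k}}} q^{\su\bfe + \su\bff - n_2(x_2+y_2)}\right).
\]

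Next, I would evaluate each inner double sum by applying Lemma~\ref{lem:sum}(i) to the rows separately. This turns the first factor into $\sum_{n_1\ge 0} t^{n_1} q^{E_1(n_1,k)} \qbin{u_\ci-x_1}{n_1}\qbin{v_\ci-y_1}{n_1+k}$, where
\[
E_1(n_1,k) = \tbinom{n_1+1}{2} + n_1 x_1 + \tbinom{n_1+k+1}{2} + (n_1+k)(y_1-1) - n_1(x_1+y_1),
\]
and similarly the second factor becomes a sum with exponent $E_2(n_2,k)$ obtained from $E_1$ by replacing $k$ with $-k$ and the subscripts $1,\ci$ with $2,\bu$.

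The main remaining step is to simplify these exponents. A direct expansion collapses $E_1(n_1,k)$ to $n_1(n_1+k) + \binom{k}{2} + k y_1$, and the analogous computation yields $E_2(n_2,k) = n_2(n_2-k) + \binom{k+1}{2} - k y_2$. The $n_i$-dependent pieces $n_1(n_1+k)$ and $n_2(n_2-k)$ remain inside the sums and reproduce the two series displayed in~\eqref{eq:fr}, while the remaining constants pull out to the common prefactor $q^{\binom{k}{2}+\binom{k+1}{2}+k(y_1-y_2)} = q^{k^2+k(y_1-y_2)}$. Invoking condition~\eqref{condition} one more time in the form $y_1 - y_2 = x_2 - x_1$ rewrites this as $q^{k(k+x_2-x_1)}$, matching exactly the prefactor in~\eqref{eq:fr}. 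The only real obstacle is the bookkeeping in this exponent simplification; the underlying calculation is routine.
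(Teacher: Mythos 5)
Your proposal is correct and follows essentially the same route as the paper's proof: factor the left-hand side using the definition~\eqref{def:ptra} together with $nz=n_1(x_1+y_1)+n_2(x_2+y_2)$, evaluate each factor via Lemma~\ref{lem:sum}(i), and simplify the exponents to $n_1(n_1+k)+\binom{k}{2}+ky_1$ and $n_2(n_2-k)+\binom{k+1}{2}-ky_2$, finally combining the constants into $q^{k(k+x_2-x_1)}$ using $y_1-y_2=x_2-x_1$. All the exponent bookkeeping in your outline checks out.
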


\begin{proof}
Using~\eqref{def:ptra}, the left-hand side expression can be factored as
\begin{equation}\label{eq:n1n2}\left(\sum_{n_1\ge0}t^{n_1}\sum_{\cd\in\tra{(x_1,u_\ci]_{n_1}}{[y_1,v_\ci)_{n_1+k}}}q^{\su\bfc+\su\bfd-n_1z}\right)
\left(\sum_{n_2\ge0}t^{n_2}\sum_{\cd\in\tra{(x_2,u_\bu]_{n_2}}{[y_2,v_\bu)_{n_2-k}}} q^{\su\bfc+\su\bfd-n_2z}\right)
\end{equation}
For fixed $n_1$, Lemma~\ref{lem:sum}(i) gives
\begin{align}\nonumber
\sum_{\cd\in\tra{(x_1,u_\ci]_{n_1}}{[y_1,v_\ci)_{n_1+k}}}q^{\su\bfc+\su\bfd-n_1z}
&=\left(\sum_{\bfc\in(x_1,u_\ci]_{n_1}}q^{\su\bfc}\right)
\left(\sum_{\bfd\in[y_1,v_\ci)_{n_1+k}}q^{\su\bfd}\right)q^{-n_1(x_1+y_1)}\\
&=q^{n_1(n_1+k)+\binom{k}{2}+ky_1}\qbin{u_\ci-x_1}{n_1}\qbin{v_\ci-y_1}{n_1+k},
\label{eq:n1}
\end{align}
where we used the simplification
$$\binom{n_1+1}{2}+\binom{n_1+k+1}{2}+n_1x_1+(n_1+k)(y_1-1)-n_1(x_1+y_1)=n_1(n_1+k)+\binom{k}{2}+ky_1.$$
Similarly,
\begin{equation}\label{eq:n2}
\sum_{\cd\in\tra{(x_2,u_\bu]_{n_2}}{[y_2,v_\bu)_{n_2-k}}} q^{\su\bfc+\su\bfd-n_2z}
=q^{n_2(n_2-k)+\binom{k+1}{2}-ky_2}\qbin{u_\bu-x_2}{n_2}\qbin{v_\bu-y_2}{n_2-k}.
\end{equation}

Substituting~\eqref{eq:n1} and~\eqref{eq:n2} into~\eqref{eq:n1n2} and using that $\binom{k}{2}+\binom{k+1}{2}+k(y_1-y_2)=k(k+x_2-x_1)$, we obtained the stated identity.
\end{proof}

\subsection{Crossings in pairs of two-rowed arrays}

Let vertex $V_s$ be a crossing of two paths $P$ and $Q$, as defined in Section~\ref{sec:pairs-crossing}. We say that $V_s$ is an {\em upward} (resp.\ {\em downward}) crossing of $(P,Q)$ if the step of $P$ leaving $V_s$ is an $N$ (resp.\ $E$); equivalently, if the step of $Q$ leaving $V_s$ is an $E$ (resp.\ $N$).

Crossings of a pair of paths can be read from their encoding~\eqref{bij:PQto2ra} as a pair of two-rowed arrays. Indeed, suppose that $(P,Q)$ is encoded by $\cdef$, 
where $\cd\in\tra{(x_1,u_\ci]_{n_1}}{[y_1,v_\ci)_{n_1}}$ and $\ef\in\tra{(x_2,u_\bu]_{n_2}}{[y_2,v_\bu)_{n_2}}$, and let $c_0:=x_1$, $d_0:=y_1$, $c_{n_1+1}:=u_\ci$, $d_{n_1+1}:=v_\ci$, $e_0:=x_2$, $f_0:=y_2$, $e_{n_2+1}:=u_\bu$, $f_{n_2+1}:=v_\bu$ by convention. For simplicity, let us assume that $A_1\prec A_2$ or $A_1=A_2$. Then $(P,Q)$ has an
upward crossing at $(c_i,f_j)$, where $0\le i\le n_1$ and $1\le j\le n_2+1$, if all of the following hold:
\begin{enumerate}[(i$^\uw$)]
\item $e_{j-1}\le c_i<e_j$ and $d_i\le f_j<d_{i+1}$,
\item $(e_{j-1},f_{j-1},e_{j-2},f_{j-2},\dots,e_0,f_0)\alt(c_i,d_i,c_{i-1},d_{i-1},\dots,c_0,d_0)$ and\\
$(d_i,c_{i-1},d_{i-1},c_{i-2},\dots,c_0,d_0)\alt(f_j,e_{j-1},f_{j-1},e_{j-2},\dots,e_0,f_0)$,
\end{enumerate}
where $\alt$ is defined recursively by $(a_1,a_2,a_3,\dots)\alt(b_1,b_2,b_3,\dots)$ if either $a_1<b_1$, or $a_1=b_1$ and $(b_2,b_3,\dots)\alt(a_2,a_3,\dots)$.
Indeed, condition (i$^\uw$) states that $(c_i,f_j)$ belongs to both $P$ and $Q$, and that $P$ (resp.\ $Q$) leaves this vertex with with an $N$ (resp.\ $E$).
Condition (ii$^\uw$) states that, if $V_1$ is the first vertex of the maximal sequence of consecutive common vertices ending at $(c_i,f_j)$, then $P$ (resp.\ $Q$) arrives at $V_1$ with an $N$ (resp.\ $E$).

Similarly, $(P,Q)$ has a downward crossing at $(e_j,d_i)$, where $1\le i\le n_1+1$ and $0\le j\le n_2$, if
\begin{enumerate}[(i$^\dw$)]
\item $c_{i-1}\le e_j<c_{i}$ and $f_{j}\le d_i<f_{j+1}$,
\item $(c_{i-1},d_{i-1},c_{i-2},d_{i-2},\dots,c_0,d_0)\alt(e_j,f_j,e_{j-1},f_{j-1},\dots,e_0,f_0)$ and\\ 
$(f_j,e_{j-1},f_{j-1},e_{j-2},\dots,c_0,d_0)\alt(d_i,c_{i-1},d_{i-1},c_{i-2},\dots,e_0,f_0)$.
\end{enumerate}
For example, the pair of paths in Figure~\ref{fig:pair-arrays} has a downward crossing at $(e_1,d_2)=(3,4)$. Condition (i$^\dw$) states that $3\le 3<6$ and $2\le 4<5$, and condition (ii$^\dw$) states that
$(3,2,0,2)\alt(3,2,2,0)$ and $(2,2,0)\alt(4,3,2,0,2)$.

Next we generalize the definition of upward and downward crossings to pairs of two-rowed arrays $\cdef\in\ptra{(x_1,u_\ci]}{[y_1,v_\ci)}{(x_2,u_\bu]}{[y_2,v_\bu)}_{n,k}$ with $k\in\Z$. 
Suppose that $\cd\in\tra{(x_1,u_\ci]_{n_1}}{[y_1,v_\ci)_{n_1+k}}$ and $\ef\in\tra{(x_2,u_\bu]_{n_2}}{[y_2,v_\bu)_{n_2-k}}$, where $n_1+n_2=n$, and use the convention $c_0:=x_1$, $d_0:=y_1$, $c_{n_1+1}:=u_\ci$, $d_{n_1+k+1}:=v_\ci$, $e_0:=x_2$, $f_0:=y_2$, $e_{n_2+1}:=u_\bu$, $f_{n_2-k+1}:=v_\bu$. Let $m_1=\min(n_1,n_1+k)$ and $m_2=\min(n_2,n_2-k)$.
Then $\cdef$ has an upward crossing at $(c_i,f_j)$ if $0\le i\le m_1$ and $1\le j\le m_2+1$, and conditions (i$^\uw$) and (ii$^\uw$) hold. Similarly, it has a downward crossing at $(e_j,d_i)$ if $1\le i\le m_1+1$ and $0\le j\le m_2$, and conditions (i$^\dw$) and (ii$^\dw$) hold.

It is convenient to think of crossings of a pair of two-rowed arrays as crossings of the pair of paths obtained by truncating the arrays, similarly to what we did in Section~\ref{sec:crossings} for single arrays. 
Let $\Tr(\cd)$ be the path in $\P_{(x_1,y_1)\to(c_{m_1+1},d_{m_1+1})}$ having valleys at positions $(c_i,d_i)$ for $1\le i\le m_1$, and let $\Tr(\ef)$ be the path in $\P_{(x_2,y_2)\to(e_{m_2+1},f_{m_2+1})}$ having valleys at positions $(e_j,f_j)$ for $1\le j\le m_2$. 
Then the upward and downward crossings of $\cdef$ can be identified with the upward and downward crossings of the pair of paths $(\Tr(\cd),\Tr(\ef))$. See Figure~\ref{fig:truncated-pairs} for an example.
In particular, upward crossings are always at vertices of the form $(c_i,f_j)$, and downward crossings are at vertices of the form $(e_j,d_i)$, for some $i,j$. 

\begin{figure}[htb]
\centering
\begin{tikzpicture}[scale=0.6]
\begin{scope}[shift={(-2,5.5)},scale=1.17]
\drawarray{{0,3}}{{1,2,5}}
\draw (1,.8) node {$\le$};
\draw (.5,0) node {$\le$};
\draw (1.5,0) node {$<$};
\dwcrossingc{1}{olive}
\draw (2.5,-.2)--(2.5,1);
\begin{scope}[shift={(3,0)}]
\drawarray{{1,2,4,4}}{{0,2,4}}
\ineq{3}{2}
\uwcrossingc{1}{olive}
\end{scope}
\draw (6.7,.4) node[right] {$\in\ptra{(0,3]}{[1,5)}{(1,4]}{[0,4)}_{3,1}$};
\end{scope}

\draw (-.3,0)--(4.3,0);
\draw (0,-.3)--(0,4.3);
\diamant{2}{2}
    \draw[red,very thick,fill](0,1) \start\N\E\E\E;
    \draw[blue,dashed,very thick,fill](1,0) \start\N\N\E\N\N\E\E;
\crossingc{2}{2}{olive};
\draw[olive, above left] (2,2) node[scale=.8] {$(2,2)$};
\draw[left] (0,1) node[scale=.8] {$(0,1)$};
\draw[right] (3,2) node[scale=.8] {$(3,2)$};
\draw[below] (1,0) node[scale=.8] {$(1,0)$};
\draw[right] (4,4) node[scale=.8] {$(4,4)$};
\end{tikzpicture}
\caption{A pair of two-rowed arrays $\cdef$ with its crossing circled, and the corresponding pair of paths $(\Tr(\cd),\Tr(\ef))$.}
\label{fig:truncated-pairs}
\end{figure}

It is clear from this description that there is a natural ordering of the crossings by increasing $x$-coordinate, or equivalently, by increasing $y$-coordinate.
As in the case of single arrays, the {\em $r$th crossing} of a pair of two-rowed arrays will always refer to the $r$th crossing in this ordering.

For $r\ge0$, denote by $\ptra{(x_1,u_\ci]}{[y_1,v_\ci)}{(x_2,u_\bu]}{[y_2,v_\bu)}_{n,k}^{\ge r}$ the 
subset of $\ptra{(x_1,u_\ci]}{[y_1,v_\ci)}{(x_2,u_\bu]}{[y_2,v_\bu)}_{n,k}$ consisting of pairs of arrays that have at least $r$ crossings. 
The encoding~\eqref{bij:PQto2ra} restricts to a bijection
\begin{equation}\label{bij:PQto2ra:cro}
\{\pathsP{A_1}{B_\ci}{A_2}{B_\bu}{r}:\des(P)+\des(Q)=n\}\to\ptra{(x_1,u_\ci]}{[y_1,v_\ci)}{(x_2,u_\bu]}{[y_2,v_\bu)}_{n}^{\ge r}.
\end{equation}
Using Equation~\eqref{eq:majPQ2ra}, it follows that, if $z=x_1+y_1=x_2+y_2$, then
\begin{equation}\label{eq:Hto2ra}
H^{\ge r}_{A_1\to B_\ci,A_2\to B_\bu}(t,q)=\sum_{n\ge0}\,t^n\sum_{\cdef\in\ptra{(x_1,u_\ci]}{[y_1,v_\ci)}{(x_2,u_\bu]}{[y_2,v_\bu)}_{n}^{\ge r}} q^{\su\bfc+\su\bfd+\su\bfe+\su\bff-nz}.
\end{equation}

To prove Theorem~\ref{thm:pairs:des}, we will construct bijections between $\ptra{(x_1,u_\ci]}{[y_1,v_\ci)}{(x_2,u_\bu]}{[y_2,v_\bu)}_{n}^{\ge r}$ and sets of the form 
$\ptra{(x_1,u_2]}{[y_1,v_2)}{(x_2,u_1]}{[y_2,v_1)}_{n,k}$ for some $k\in\Z$, and then apply Lemma~\ref{lem:sum-pairs}.

\begin{lemma}\label{lem:first_crossing-pairs}
Let $r\ge1$. If $A_1\prec A_2$, then the $r$th crossing of a pair of arrays $\cdef\in\ptra{(x_1,u_\ci]}{[y_1,v_\ci)}{(x_2,u_\bu]}{[y_2,v_\bu)}_{n,k}^{\ge r}$ is a downward crossing if $r$ is odd, and an upward crossing if $r$ is even. 
\end{lemma}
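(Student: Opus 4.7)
The plan is to translate the lemma into a geometric statement about the pair of lattice paths $(\Tr(\cd), \Tr(\ef))$. Since the upward and downward crossings of $\cdef$ coincide with those of this pair of paths by construction, it suffices to show that if $A_1 \prec A_2$, then the crossings of the pair of paths, read from left to right, alternate between downward and upward, starting with downward.

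The central idea is to track a relative position indicator for the two paths $P = \Tr(\cd)$ and $Q = \Tr(\ef)$: at each moment outside of a common block, $P$ is either \emph{northwest} or \emph{southeast} of $Q$, in the sense that at the vertices immediately flanking the block, one path's vertex lies strictly north and west of the other's. Because $A_1 \prec A_2$, path $P$ begins northwest of $Q$.

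First I would verify that at each crossing $V_s$ the relative position flips. For an upward crossing, the step of $P$ leaving $V_s$ is $N$, and the matching condition in the definition of a crossing forces $P$ to also arrive at $V_1$ with $N$; maximality of the common block $V_1, \ldots, V_s$ then forces $Q$ to arrive at $V_1$ with $E$ and leave $V_s$ with $E$. Reading off the neighboring vertices, $P$ lies directly south of $V_1$ while $Q$ lies directly west of $V_1$, so $P$ is southeast of $Q$ just before $V_1$; just after $V_s$, $P$ lies directly north and $Q$ directly east, so $P$ is northwest of $Q$. Symmetrically, a downward crossing flips the pair from northwest to southeast.

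Next I would verify that a maximal common block $V_1, \ldots, V_s$ which is \emph{not} a crossing leaves the relative position unchanged. The key observation is that maximality forces the arriving and leaving step types to differ for \emph{both} paths at such a block; otherwise either the block would be a crossing or the two paths could not diverge at $V_s$ without extending the block. A direct case analysis of the remaining configuration then shows that $P$ ends up on the same side of $Q$ as it was before the block. Combined with the initial position of $P$ northwest of $Q$ and the fact that only crossings flip the relative position, the crossings must alternate downward, upward, downward, $\ldots$, giving the claim. The main obstacle is the bookkeeping around non-crossing common blocks: one must carefully enumerate the step-type patterns compatible with maximality and verify preservation of the relative position in each case. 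Once this is done, the alternation follows by an immediate induction on $r$.
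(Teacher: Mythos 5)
Your proposal is correct and follows essentially the same route as the paper: the paper's proof also identifies the crossings of $\cdef$ with those of the pair $(\Tr(\cd),\Tr(\ef))$ starting at $A_1$ and $A_2$, and then invokes $A_1\prec A_2$ to conclude that downward and upward crossings alternate starting with a downward one. You simply spell out the relative-position (northwest/southeast) bookkeeping, including the non-crossing common blocks, which the paper treats as an immediate geometric consequence.
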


\begin{proof} 
Interpreting crossings of $\cdef$ as crossings of the pair of paths $(\Tr(\cd),\Tr(\ef))$, which start at $A_1$ and $A_2$, respectively, the fact that $A_1\prec A_2$ implies that downward and upward crossings must alternate, with the first crossing being downward.
\end{proof}

For $r\ge1$, a symbol $\uw$ (resp.\ $\dw$) next to the superscript ${\ge}r$ denotes the subset of pairs of arrays
where the $r$th crossing is an upward (resp.\ downward) crossing.
For $r=0$, in the case $A_1\prec A_2$, we simplify define 
\begin{equation}\label{eq:convention_r0-pairs}
\ptra{(x_1,u_\ci]}{[y_1,v_\ci)}{(x_2,u_\bu]}{[y_2,v_\bu)}_{n,k}^{\ge 0\uw}=\ptra{(x_1,u_\ci]}{[y_1,v_\ci)}{(x_2,u_\bu]}{[y_2,v_\bu)}_{n,k}^{\ge 0\dw}=\ptra{(x_1,u_\ci]}{[y_1,v_\ci)}{(x_2,u_\bu]}{[y_2,v_\bu)}_{n,k}^{\ge 0}=\ptra{(x_1,u_\ci]}{[y_1,v_\ci)}{(x_2,u_\bu]}{[y_2,v_\bu)}_{n,k}
\end{equation}
by convention.

In the case $A_1=A_2=(x,y)$ and $B_1=B_2=(u,v)$, we define $\ptra{(x,u]}{[y,v)}{(x,u]}{[y,v)}_{n,k}^{\ge 0\uw}$ (resp.\ $\ptra{(x,u]}{[y,v)}{(x,u]}{[y,v)}_{n,k}^{\ge 0\dw}$) to be the set of pairs $\cdef\in\ptra{(x,u]}{[y,v)}{(x,u]}{[y,v)}_{n,k}$ such that $\cd\neq\ef$ and 
the leftmost entry in the usual zig-zag order where $\cd$ and $\ef$ differ is in the top row and satisfies $c_i<e_i$ (resp.\ $c_i>e_i$), or it is in the bottom row and satisfies $d_i>f_i$ (resp.\ $d_i<f_i$). 
Equivalently, $\cdef$ is  in the first (resp.\ second) set if the first step where the paths $\Tr(\cd)$ and $\Tr(\ef)$ disagree is an $N$ (resp.\ $E$) step of $\Tr(\cd)$ and an $E$ (resp.\ $N$) step of $\Tr(\ef)$. See the examples in Figure~\ref{fig:0uw}.

In analogy to Lemma~\ref{lem:even=odd} for single arrays, the next lemma shows how the relative locations of the two initial points and of the two final points often force the number of crossings of a pair of two-rowed arrays to have a given parity.

\begin{lemma}\label{lem:even=odd-pairs}
Let $m\ge0$. 
\begin{enumerate}[(a)]
\item If $A_1\prec A_2$, $B_1\prec B_2$, and $s\ge0$, then
\begin{align}\label{eq:odd=even}
&\ptra{(x_1,u_2]}{[y_1,v_2)}{(x_2,u_1]}{[y_2,v_1)}_{n,s}^{\ge 2m+1}=\ptra{(x_1,u_2]}{[y_1,v_2)}{(x_2,u_1]}{[y_2,v_1)}_{n,s}^{\ge 2m+1\dw}=\ptra{(x_1,u_2]}{[y_1,v_2)}{(x_2,u_1]}{[y_2,v_1)}_{n,s}^{\ge 2m\uw}=\ptra{(x_1,u_2]}{[y_1,v_2)}{(x_2,u_1]}{[y_2,v_1)}_{n,s}^{\ge 2m},&\\
\label{eq:even=odd}
&\ptra{(x_1,u_1]}{[y_1,v_1)}{(x_2,u_2]}{[y_2,v_2)}_{n,-s}^{\ge 2m+2}=\ptra{(x_1,u_1]}{[y_1,v_1)}{(x_2,u_2]}{[y_2,v_2)}_{n,-s}^{\ge 2m+2\uw}
=\ptra{(x_1,u_1]}{[y_1,v_1)}{(x_2,u_2]}{[y_2,v_2)}_{n,-s}^{\ge 2m+1\dw}=\ptra{(x_1,u_1]}{[y_1,v_1)}{(x_2,u_2]}{[y_2,v_2)}_{n,-s}^{\ge 2m+1}.&
\end{align}
\item If $A_1\prec A_2$ and $B_1=B_2$, then~\eqref{eq:odd=even} and~\eqref{eq:even=odd} hold for $s\ge1$. 
\item If $A_1=A_2=(x,y)$, $B_1=B_2=(u,v)$, and $s\ge1$, then
\begin{equation}\label{eq:odd=evenAB}
\ptra{(x,u]}{[y,v)}{(x,u]}{[y,v)}_{n,s}^{\ge 2m+1\dw}=\ptra{(x,u]}{[y,v)}{(x,u]}{[y,v)}_{n,s}^{\ge 2m\uw}\quad\text{and}\quad
\ptra{(x,u]}{[y,v)}{(x,u]}{[y,v)}_{n,-s}^{\ge 2m+2\uw}
=\ptra{(x,u]}{[y,v)}{(x,u]}{[y,v)}_{n,-s}^{\ge 2m+1\dw}.
\end{equation}
\end{enumerate}
\end{lemma}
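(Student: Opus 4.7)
My plan is to mirror the blueprint of the proof of Lemma~\ref{lem:even=odd}. In each of the equations in parts (a)--(c), the outer equalities follow from Lemma~\ref{lem:first_crossing-pairs} together with convention~\eqref{eq:convention_r0-pairs} in parts (a)--(b), and from the definitions of the sets $\ptra{\cdots}^{\ge 0\uw}$ and $\ptra{\cdots}^{\ge 0\dw}$ given just before the statement in part (c). The inclusions $\ptra{\cdots}^{\ge r+1} \subseteq \ptra{\cdots}^{\ge r}$ are trivial, so the real content of each middle equality is the reverse containment, which reduces to the statement that every pair $\cdef$ in the indicated set has a crossing count of a specific parity forced by the endpoints.

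The main tool will be to identify the crossings of $\cdef$ with those of the truncated pair $(\Tr(\cd), \Tr(\ef))$, as described just before the lemma. In parts (a) and (b), the hypothesis $A_1 \prec A_2$ combined with Lemma~\ref{lem:first_crossing-pairs} forces downward and upward crossings to alternate strictly, starting with a downward one. Each downward crossing flips the anti-diagonal order of the two paths (which one lies north-west of the other), and each upward crossing flips it back. Consequently, the parity of the total crossing count equals the parity of the number of such flips, which can be read off by comparing the starting relative order at $(A_1, A_2)$ with the relative order at the endpoints of the two truncated paths.

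For part (a), I will compute these truncated endpoints in each case. For~\eqref{eq:odd=even}, $\Tr(\cd)$ ends at $(u_2, d_{n_1+1})$ and $\Tr(\ef)$ ends at $(e_{n_2-s+1}, v_1)$; using $B_1 \prec B_2$ (so $u_1 < u_2$ and $v_2 < v_1$) together with $e_{n_2-s+1} \leq u_1$ and $d_{n_1+1} \leq v_2$, the endpoint of $\Tr(\cd)$ is strictly to the south-east of that of $\Tr(\ef)$, reversing the starting order and thereby forcing an odd crossing count. The analogous computation for~\eqref{eq:even=odd}, using $m_1 = n_1 - s$ and $m_2 = n_2$, yields endpoints in the same relative order as the starting points, which forces an even count. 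Part (b) follows from exactly the same arguments; the assumption $s \geq 1$ is needed precisely to ensure that the two truncated paths end at distinct points (for $s = 0$ they would both land at the common point $B_1 = B_2$, and the parity would not be pinned down).

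Part (c) is where I expect the main obstacle, because $A_1 = A_2$ removes the partial-order hypothesis at the start and $B_1 = B_2$ removes the distinct-endpoint handle at the finish. The initial relative order of the two paths is instead encoded by the $\uw$/$\dw$ label: by the definitions preceding the lemma, $\cdef \in \ptra{\cdots}^{\ge 0\uw}$ (respectively $\ptra{\cdots}^{\ge 0\dw}$) means that immediately after the paths first diverge, $\Tr(\cd)$ lies to the north-west (respectively south-east) of $\Tr(\ef)$, substituting for $A_1 \prec A_2$ (respectively $A_2 \prec A_1$). With this substitute, the alternation of downward and upward crossings remains valid, and the parity argument carries over, again using $s \geq 1$ to guarantee distinct truncated endpoints. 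The delicate part will be to verify carefully that the zig-zag ordering on array entries used to define ${\ge 0\uw}$ and ${\ge 0\dw}$ does correctly capture the relative geometric configuration of the two paths at their first divergence, so that the alternating-crossings bookkeeping remains consistent in this degenerate case.
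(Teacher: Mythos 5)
Your overall strategy is the same as the paper's: identify the crossings of $\cdef$ with those of the truncated pair $(\Tr(\cd),\Tr(\ef))$, and let the relative positions of the starting points and of the truncated endpoints force the parity of the number of crossings. Your treatment of part (a) matches the paper's proof, including the endpoints $(u_2,d_{n_1+1})$ and $(e_{n_2-s+1},v_1)$ for~\eqref{eq:odd=even} and the analogous computation with $m_1=n_1-s$, $m_2=n_2$ for~\eqref{eq:even=odd}.

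There is, however, a genuine gap in your justification of parts (b) and (c): you claim that $s\ge1$ is needed ``precisely to ensure that the two truncated paths end at distinct points,'' i.e.\ that distinctness of the truncated endpoints pins down the parity. That is not sufficient. In part (b), for~\eqref{eq:odd=even} the truncated endpoints are $B_2'=(u,d_{n_1+1})$ and $B_1'=(e_{n_2-s+1},v)$ with $d_{n_1+1}<v$ but only $e_{n_2-s+1}\le u$; in the boundary case $e_{n_2-s+1}=u$ the endpoints are distinct yet share the abscissa $u$, and for arbitrary paths with such endpoints the parity is \emph{not} forced under the paper's definition of crossing. For instance, $P=EE$ from $(0,1)$ to $(2,1)$ and $Q=NEN$ from $(1,0)$ to $(2,2)$ start in the order $A_1\prec A_2$ and end at distinct points, yet have zero crossings, because the maximal common-vertex sequence ends at the endpoint of $P$ and is therefore disqualified. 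So some structural input about the truncated paths beyond distinctness is required. The paper supplies it by deleting the terminal run of $E$ steps of $\Tr(\ef)$ (harmless, since $\Tr(\cd)$ never reaches height $v$ and hence never meets that run), obtaining a path ending at $B_1''=(e_{n_2-s},v)$ with $e_{n_2-s}<e_{n_2-s+1}\le u$, i.e.\ a strict relation $B_1''\prec B_2'$, from which odd parity does follow; the mirror trimming of $\Tr(\cd)$ handles~\eqref{eq:even=odd}, and the same trimmed pair with $B_1''\prec B_2'$ is exactly what the paper uses in part (c), together with the first-divergence reading of the labels ${\ge}0{\uw}$/${\ge}0{\dw}$ that you describe correctly. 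To complete your plan for (b) and (c), you must replace the ``distinct endpoints'' justification by this trimming step, or by an explicit argument that neither truncated endpoint lies on the other truncated path so that the topological parity argument applies.
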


\begin{proof}
In each of Equations~\eqref{eq:odd=even} and~\eqref{eq:even=odd} for $A_1\prec A_2$, the two outer equalities  follow from Lemma~\ref{lem:first_crossing-pairs} (and convention~\eqref{eq:convention_r0-pairs} in the case $m=0$), and the left-hand side is trivially contained in the right-hand side. To prove the reverse containment, we will show that the parity of the number of crossings is forced in each case.

Let us first prove Equation~\eqref{eq:odd=even} with the hypotheses of part~(a). 
Let $\cdef\in\ptra{(x_1,u_2]}{[y_1,v_2)}{(x_2,u_1]}{[y_2,v_1)}_{n,s}$, so that 
$\cd\in\tra{(x_1,u_2]_{n_1}}{[y_1,v_2)_{n_1+s}}$ and $\ef\in\tra{(x_2,u_1]_{n_2}}{[y_2,v_1)_{n_2-s}}$ for some $n_1,n_2$ summing to $n$.
Crossings of $\cdef$ are crossings of the pair of paths $(\Tr(\cd),\Tr(\ef))\in\P_{A_1\to B_2'}\times\P_{A_2\to B_1'}$, where $B_2'=(u_2,d_{n_1+1})$ and $B_1'=(e_{n_2-s+1},v_1)$. Since $d_{n_1+1}\le v_2$ and $e_{n_2-s+1}\le u_1$, the condition $B_1\prec B_2$ implies that $B'_1\prec B'_2$. Thus, since $A_1\prec A_2$, the number of crossings of $\cdef$ must be odd, proving Equation~\eqref{eq:odd=even} in this case.

With the hypotheses of part~(b), letting $B_1=B_2=(u,v)$ and $s\ge1$, the same argument yields endpoints $B_2'=(u,d_{n_1+1})$ and $B_1'=(e_{n_2-s+1},v)$ with $d_{n_1+1}< d_{n_1+s+1}=v$ and $e_{n_2-s+1}\le u$. 
Let $\widetilde\Tr(\ef)$ be the path obtained by removing the run of $E$ steps at the end of $\Tr(\ef)$, which does not affect any crossings since $B_2'$ is strictly below these steps. Crossings of $\cdef$ are now crossings of $(\Tr(\cd),\widetilde\Tr(\ef))\in\P_{A_1\to B_2'}\times\P_{A_2\to B_1''}$, where $B_1''=(e_{n_2-s},v)$. Since $e_{n_2-s}<e_{n_2-s+1}\le u$, we have $B_1''\prec B_2'$, implying again that the number of crossings of $\cdef$ is odd, 
which proves Equation~\eqref{eq:odd=even} also in this case.

With the hypotheses of part~(c), the same argument gives a pair $(\Tr(\cd),\widetilde\Tr(\ef))\in\P_{A\to B_2'}\times\P_{A\to B_1''}$, where $B_1''\prec B_2'$ as before. Suppose that $\cdef\in\ptra{(x,u]}{[y,v)}{(x,u]}{[y,v)}_{n,s}^{\ge 2m\uw}$. For $m\ge1$, this means that the $2m$th crossing of $(\Tr(\cd),\widetilde\Tr(\ef))$ is an upward crossing; for $m=0$, this means that the first step where these paths disagree is an $N$ step of $\Tr(\cd)$ and an $E$ step of $\widetilde\Tr(\ef)$. The fact that $B_1''\prec B_2'$ forces these paths to cross, with the $2m+1$st crossing being a downward crossing, which proves the first equality in~\eqref{eq:odd=evenAB}.

The proof of Equation~\eqref{eq:even=odd} is similar. 
Let $\cdef\in\ptra{(x_1,u_1]}{[y_1,v_1)}{(x_2,u_2]}{[y_2,v_2)}_{n,-s}$, so that 
$\cd\in\tra{(x_1,u_1]_{n_1}}{[y_1,v_1)_{n_1-s}}$ and $\ef\in\tra{(x_2,u_2]_{n_2}}{[y_2,v_2)_{n_2+s}}$ for some $n_1,n_2$.
Crossings of $\cdef$ are crossings of $(\Tr(\cd),\Tr(\ef))\in\P_{A_1\to B_1'}\times\P_{A_2\to B_2'}$, where $B_1'=(c_{n_1-s+1},v_1)$ and $B_2'=(u_2,f_{n_2+1})$. Since $c_{n_1-s+1}\le u_1$ and $f_{n_2+1}\le v_2$, the condition $B_1\prec B_2$ implies that $B'_1\prec B'_2$.  Thus, if the hypothesis of part (a) hold, the number of crossings of $\cdef$ must be even, proving Equation~\eqref{eq:even=odd} in this case.

Letting now $B_1=B_2=(u,v)$ and $s\ge1$, we get endpoints $B_1'=(c_{n_1-s+1},v)$ and $B_2'=(u,f_{n_2+1})$ with 
$c_{n_1-s+1}\le u$ and $f_{n_2+1}<v$. Removing the run of $E$ steps at the end of $\Tr(\cd)$, which does not affect any crossings, we obtain a pair $(\widetilde\Tr(\cd),\Tr(\ef))\in\P_{A_1\to B_1''}\times\P_{A_2\to B_2'}$, where $B_1''=(c_{n_1-s},v)$. Now $c_{n_1-s}<c_{n_1-s+1}\le u$, and so $B_1''\prec B_2'$, implying again that the number of crossings of $\cdef$ is even. This proves Equation~\eqref{eq:even=odd} with the hypotheses of part~(b).

Finally, with the hypotheses of part~(c), we obtain a pair $(\widetilde\Tr(\cd),\Tr(\ef))\in\P_{A_1\to B_1''}\times\P_{A_2\to B_2'}$, where $B_1''\prec B_2'$. If $\cdef\in\ptra{(x,u]}{[y,v)}{(x,u]}{[y,v)}_{n,-s}^{\ge 2m+1\dw}$, the $2m+1$st crossing of $(\widetilde\Tr(\cd),\Tr(\ef))$ is a downward crossing, so the paths must cross again, and the $2m+2$nd crossing must be an upward crossing. This proves the second equality in~\eqref{eq:odd=evenAB}.
\end{proof}

\subsection{The bijections $\ga_r$ and $\de_r$}\label{sec:ga-de}

The bijections $\ga_r$ and $\de_r$ play a similar role for pairs of two-rowed arrays as the bijections $\al_r$ and $\be_r$ played for single arrays. They are reminiscent of the bijection $\theta_r$ defined in~\cite{part1} for pairs of paths; however, $\ga_r$ and $\de_r$ do not restrict to bijections for pairs of paths, since they change the relative lengths of the rows of the arrays.

For $\cdef\in\ptra{(x_1,u_\ci]}{[y_1,v_\ci)}{(x_2,u_\bu]}{[y_2,v_\bu)}_{n}$, we say that an upward crossing at $(c_i,f_j)$ (resp.\ a downward crossing at $(e_j,d_i)$) is {\em proper} if $c_i\neq u_\ci$ and $f_j\neq v_\bu$ (resp.\ $e_j\neq u_\bu$ and $d_i\neq v_\ci$), that is, neither entry equals the upper bound for its row.

For $r\ge1$, the map $\ga_r$ applies to pairs of two-rowed arrays $\cdef$ whose $r$th crossing is a proper upward crossing, say at $(c_i,f_j)$, and it swaps the entries to the right of $c_i$ in each row of the first array with the entries to the right of $f_j$ in each row of the second array. Schematically, we have:
\begin{center}
\begin{tikzpicture}[scale=0.5]
\draw (0.5,1) rectangle (5,2);
\draw (.5,1.5) node[right=-1.5] {$x_1$};
\draw (2.75,1.5) node {$\cdots$};
\draw (0,0) rectangle (4.5,1);
\draw (0,.5) node[right=-1.5] {$y_1$};
\draw (2.25,0.5) node {$\cdots$};
\draw[fill=yellow] (4.5,1.5) circle (.4);
\draw (4.5,1.5) node {$c_i$};
\draw[pattern color=red,pattern=north west lines] (5,1) rectangle (9,2);
\draw[pattern color=blue,pattern=north east lines] (4.5,0) rectangle (7.5,1);
\draw (4.5,.5) node[left=-1.5] {$d_{i}$};
\draw (5,1.5) node[right=-1.5] {$c_{i+1}$};
\draw (7.25,1.5) node {$\cdots$};
\draw (4.5,.5) node[right=-1.5] {$d_{i+1}$};
\draw (6.25,0.5) node[scale=.8] {$\cdots$};
\draw (9,1.5) node[left=-1.5] {$u_2$};
\draw (7.5,.5) node[left=-1.5] {$v_2$};
\draw (10,0)--(10,2);

\begin{scope}[shift={(11,0)}]
\draw (0.5,1) rectangle (3.5,2);
\draw (1,1.5) node {$x_2$};
\draw (1.7,1.5) node[scale=.6] {$\cdots$};
\draw (0,0) rectangle (4,1);
\draw (.5,.5) node {$y_2$};
\draw (2,0.5) node {$\cdots$};
\draw[fill=yellow] (3.5,.5) circle (.4);
\draw (3.5,.5) node {$f_j$};
\draw[pattern color=magenta,pattern=grid] (3.5,1) rectangle (8.5,2);
\draw[pattern color=green,pattern=crosshatch] (4,0) rectangle (9,1);
\draw (3.5,1.5) node[left=-1.5] {$e_{j-1}$};
\draw (3.5,1.5) node[right=-1.5] {$e_{j}$};
\draw (6,1.5) node {$\cdots$};
\draw (4,.5) node[right=-1.5] {$f_{j+1}$};
\draw (6.75,.5) node {$\cdots$};
\draw (8.5,1.5) node[left=-1.5] {$u_1$};
\draw (9,.5) node[left=-1.5] {$v_1$};
\end{scope}

\draw[<->] (10.25,-.5) -- node[right]{$\ga_r$} (10.25,-1.5);

\begin{scope}[shift={(0,-4)}]
\draw (0.5,1) rectangle (5,2);
\draw (.5,1.5) node[right=-1.5] {$x_1$};
\draw (2.75,1.5) node {$\cdots$};
\draw (0,0) rectangle (4.5,1);
\draw (0,.5) node[right=-1.5] {$y_1$};
\draw (2.25,0.5) node {$\cdots$};
\draw[fill=yellow] (4.5,1.5) circle (.4);
\draw (4.5,1.5) node {$c_i$};
\draw[pattern color=magenta,pattern=grid] (5,1) rectangle (10,2);
\draw[pattern color=green,pattern=crosshatch] (4.5,0) rectangle (9.5,1);
\draw (4.5,.5) node[left=-1.5] {$d_{i}$};
\draw (5,1.5) node[right=-1.5] {$e_{j}$};
\draw (7.5,1.5) node {$\cdots$};
\draw (4.5,.5) node[right=-1.5] {$f_{j+1}$};
\draw (7.25,.5) node {$\cdots$};
\draw (10,1.5) node[left=-1.5] {$u_1$};
\draw (9.5,.5) node[left=-1.5] {$v_1$};
\draw (10.5,0)--(10.5,2);

\begin{scope}[shift={(11,0)}]
\draw (0.5,1) rectangle (3.5,2);
\draw (1,1.5) node {$x_2$};
\draw (1.7,1.5) node[scale=.6] {$\cdots$};
\draw (0,0) rectangle (4,1);
\draw (.5,.5) node {$y_2$};
\draw (2,0.5) node {$\cdots$};
\draw[fill=yellow] (3.5,.5) circle (.4);
\draw (3.5,.5) node {$f_j$};
\draw[pattern color=red,pattern=north west lines] (3.5,1) rectangle (7.5,2);
\draw[pattern color=blue,pattern=north east lines] (4,0) rectangle (7,1);
\draw (3.5,1.5) node[left=-1.5] {$e_{j-1}$};
\draw (3.5,1.5) node[right=-1.5] {$c_{i+1}$};
\draw (5.75,1.5) node {$\cdots$};
\draw (4,.5) node[right=-1.5] {$d_{i+1}$};
\draw (5.75,.5) node[scale=.8] {$\cdots$};
\draw (7.5,1.5) node[left=-1.5] {$u_2$};
\draw (7,.5) node[left=-1.5] {$v_2$};
\end{scope}
\end{scope}
\end{tikzpicture}
\end{center}
The fact that $(c_i,f_j)$ is a proper crossing of $\cdef$ guarantees that $c_{i+1}$ and $f_{j+1}$ exist, and that $c_i<c_{i+1}$ and $f_j<f_{j+1}$.
Condition (i$^\uw$) in the characterization of upward crossings implies that $c_i<e_j$, $d_i<f_{j+1}$, $e_{j-1}<c_{i+1}$ and $f_j<d_{i+1}$, and so the rows of the arrays in $\ga_r(\cdef)$ are increasing.
The pair $\ga_r(\cdef)$ still has a crossing at $(c_i,f_j)$: condition (i$^\uw$) holds because $e_{j-1}\le c_i<c_{i+1}$ and 
$d_i\le f_j<f_{j+1}$, and condition (ii$^\uw$) holds because the relevant entries are not affected by $\ga_r$.
This crossing is clearly proper, and it is the $r$th crossing of $\ga_r(\cdef)$ because the entries to the left of $c_i$ and $f_j$, and thus the first $r-1$ crossings of the pair of arrays, are not affected by $\ga_r$.
It follows that $\ga_r$ is an involution.

Similarly, the map $\de_r$ applies to pairs of two-rowed arrays $\cdef$ whose $r$th crossing is a proper downward crossing, say at $(e_j,d_i)$, and it again swaps the entries to the right of $d_i$ in the first array with the entries to the right of $e_j$ in the second array. Schematically, we have:
\begin{center}
\begin{tikzpicture}[scale=0.5]
\draw (0.5,1) rectangle (4.5,2);
\draw (1,1.5) node {$x_1$};
\draw (2.25,1.5) node {$\cdots$};
\draw (0,0) rectangle (5,1);
\draw (.5,.5) node {$y_1$};
\draw (2.5,.5) node {$\cdots$};
\draw[fill=yellow] (4.5,.5) circle (.4);
\draw (4.5,.5) node {$d_i$};
\draw[pattern color=red,pattern=north west lines] (4.5,1) rectangle (9,2);
\draw[pattern color=blue,pattern=north east lines] (5,0) rectangle (7.5,1);
\draw (4.5,1.5) node[left=-1.5] {$c_{i-1}$};
\draw (4.5,1.5) node[right=-1.5] {$c_{i}$};
\draw (6.75,1.5) node {$\cdots$};
\draw (5,.5) node[right=-1.5] {$d_{i+1}$};
\draw (6.5,0.5) node[scale=.6] {$\cdots$};
\draw (9,1.5) node[left=-1.5] {$u_1$};
\draw (7.5,.5) node[left=-1.5] {$v_1$};
\draw (10.25,0)--(10.25,2);

\begin{scope}[shift={(11.5,0)}]
\draw (0.5,1) rectangle (4,2);
\draw (1,1.5) node {$x_2$};
\draw (2.25,1.5) node {$\cdots$};
\draw (0,0) rectangle (3.5,1);
\draw (.5,.5) node {$y_2$};
\draw (1.75,.5) node {$\cdots$};
\draw[fill=yellow] (3.5,1.5) circle (.4);
\draw (3.5,1.5) node {$e_j$};
\draw[pattern color=magenta,pattern=grid] (4,1) rectangle (8.5,2);
\draw[pattern color=green,pattern=crosshatch] (3.5,0) rectangle (9,1);
\draw (3.5,.5) node[left=-1.5] {$f_j$};
\draw (4,1.5) node[right=-1.5] {$e_{j+1}$};
\draw (6.5,1.5) node {$\cdots$};
\draw (3.5,.5) node[right=-1.5] {$f_{j+1}$};
\draw (6.5,.5) node {$\cdots$};
\draw (8.5,1.5) node[left=-1.5] {$u_2$};
\draw (9,.5) node[left=-1.5] {$v_2$};
\end{scope}

\draw[<->] (10.625,-.5) -- node[right]{$\de_r$} (10.625,-1.5);

\begin{scope}[shift={(0,-4)}]
\draw (0.5,1) rectangle (4.5,2);
\draw (1,1.5) node {$x_1$};
\draw (2.25,1.5) node {$\cdots$};
\draw (0,0) rectangle (5,1);
\draw (.5,.5) node {$y_1$};
\draw (2.5,.5) node {$\cdots$};
\draw[fill=yellow] (4.5,.5) circle (.4);
\draw (4.5,.5) node {$d_i$};
\draw[pattern color=magenta,pattern=grid] (4.5,1) rectangle (9,2);
\draw[pattern color=green,pattern=crosshatch] (5,0) rectangle (10.5,1);
\draw (4.5,1.5) node[left=-1.5] {$c_{i-1}$};
\draw (4.5,1.5) node[right=-1.5] {$e_{j+1}$};
\draw (7,1.5) node {$\cdots$};
\draw (5,.5) node[right=-1.5] {$f_{j+1}$};
\draw (8,.5) node {$\cdots$};
\draw (9,1.5) node[left=-1.5] {$u_2$};
\draw (10.5,.5) node[left=-1.5] {$v_2$};
\draw (11,0)--(11,2);

\begin{scope}[shift={(11.5,0)}]
\draw (0.5,1) rectangle (4,2);
\draw (1,1.5) node {$x_2$};
\draw (2.25,1.5) node {$\cdots$};
\draw (0,0) rectangle (3.5,1);
\draw (.5,.5) node {$y_2$};
\draw (1.75,.5) node {$\cdots$};
\draw[fill=yellow] (3.5,1.5) circle (.4);
\draw (3.5,1.5) node {$e_j$};
\draw[pattern color=red,pattern=north west lines] (4,1) rectangle (8.5,2);
\draw[pattern color=blue,pattern=north east lines] (3.5,0) rectangle (6,1); 
\draw (3.5,.5) node[left=-1.5] {$f_j$};
\draw (4,1.5) node[right=-1.5] {$c_{i}$};
\draw (6.25,1.5) node {$\cdots$};
\draw (3.5,.5) node[right=-1.5] {$d_{i+1}$};
\draw (5,0.5) node[scale=.6] {$\cdots$};
\draw (8.5,1.5) node[left=-1.5] {$u_1$};
\draw (6,.5) node[left=-1.5] {$v_1$};
\end{scope}
\end{scope}
\end{tikzpicture}
\end{center}
The same argument shows that the rows of the arrays in $\de_r(\cdef)$ are increasing, that $\de_r(\cdef)$ still has a proper crossing at $(e_j,d_i)$, which is its $r$th crossing, and that the map $\de_r$ is an involution. In fact, if we denote by $\swap$ the involution that swaps the two two-rowed arrays in a pair, that is, 
\begin{equation}\label{eq:swap} \swap(\cdef)=\ptr{\bfe}{\bff}{\bfc}{\bfd}, \end{equation}
then the maps $\ga_r$ and $\de_r$ are related by $\de_r=\swap\circ\ga_r\circ\swap$. 

If $A_1=A_2=(x,y)$ and $B_1=B_2=(u,v)$, we can extend the definitions of $\ga_r$ and $\de_r$ to the case $r=0$ as follows. 
Let $\cdef\in\ptra{(x,u]}{[y,v)}{(x,u]}{[y,v)}_{n,k}^{\ge 0\uw}$.
If the leftmost entry where $\cd$ and $\ef$ differ is $c_i<e_i$, then the vertex $(c_i,f_i)$ satisfies condition (i$^\uw$) from the characterization of upward crossings,
since $e_{i-1}< c_i<e_i$ and $d_i=f_i<d_{i+1}$, even if it fails condition (ii$^\uw$) since $(d_i,c_{i-1},d_{i-1},\dots,c_0,d_0)=(f_i,e_{i-1},f_{i-1},\dots,e_0,f_0)$. 
If $c_i\neq u$ and $f_i\ne v$, we define $\ga_0$ by swapping the entries to the right of $c_i$ in each row of $\cd$ with the entries to the right of $f_i$ in each row of $\ef$, just as in the usual definition of $\ga_r$ if $(c_i,f_i)$ had been the $r$th crossing.

If the leftmost entry where $\cd$ and $\ef$ differ is $d_i>f_i$, now it is the vertex $(c_{i-1},f_i)$ that satisfies condition (i$^\uw$), since $e_{i-1}=c_{i-1}<e_i$ and $d_{i-1}< f_i<d_i$. If $c_{i-1}\neq u$ and $f_i\ne v$, we define $\ga_0$ by swapping the entries to the right of $c_{i-1}$ in each row of $\cd$ with the entries to the right of $f_i$ in each row of $\ef$, just as in the definition of $\ga_r$ if $(c_{i-1},f_i)$ had been the $r$th crossing. See the example in Figure~\ref{fig:0uw}.

The bijection $\de_0$ can be defined analogously, or as $\de_0=\swap\circ\ga_0\circ\swap$, but it will not be needed in the proofs.

\begin{figure}[htb]
\centering
\begin{tikzpicture}[scale=0.6]
\begin{scope}[shift={(-3,4.5)},scale=1.1] 
\drawarray{{0,2,4,4}}{{0,1,{\mathbf 3}}}
\ineq{3}{2}
\uwcrossingdot{1}
\draw (4,-.2)--(4,1);
\begin{scope}[shift={(4.5,0)}]
\drawarray{{0,2,4}}{{0,1,{\mathbf 2},3}}
\ineq{2}{3}
\dwcrossingdot{2}
\end{scope}
\draw (7.7,.4) node[right] {$\in\ptra{(0,4]}{[0,3)}{(0,4]}{[0,3)}_{3,-1}^{\ge 0\uw}$};
\draw[<->] (12.5,.4)-- node[above]{$\ga_0$} (13.2,.4);
\end{scope}

\draw (-.3,0)--(4.3,0);
\draw (0,-.3)--(0,3.3);
\diamant{2}{1}
    \draw[red,very thick,fill](0,0) \start\N\E\E\N\N\E\E;
    \draw[blue,dashed,very thick,fill](0,0) \start\N\E\E\N\E\E;
\draw[left] (0,0) node[scale=.8] {$(0,0)$};
\draw[right] (4,3) node[scale=.8] {$(4,3)$};
\draw[right] (4,2) node[scale=.8] {$(4,2)$};
\crossingdot{2}{2}

\begin{scope}[shift={(15,0)}]
\begin{scope}[shift={(-3,4.5)},scale=1.1]
\drawarray{{0,2,4}}{{0,1,{\mathbf 3}}}
\ineq{2}{2}
\uwcrossingdot {1}
\draw (3,-.2)--(3,1);
\begin{scope}[shift={(3.5,0)}]
\drawarray{{0,2,4,4}}{{0,1,{\mathbf 2},3}}
\ineq{3}{3}
\dwcrossingdot{2}
\end{scope}
\draw (7.2,.4) node[right] {$\in\ptra{(0,4]}{[0,3)}{(0,4]}{[0,3)}_{3,0}^{\ge 0\uw}$};
\end{scope}

\draw (-.3,0)--(4.3,0);
\draw (0,-.3)--(0,3.3);
\diamant{2}{1}
\diamant{4}{2}
    \draw[red,very thick,fill](0,0) \start\N\E\E\N\N\E\E;
    \draw[blue,dashed,very thick,fill](0,0) \start\N\E\E\N\E\E\N;
\draw[left] (0,0) node[scale=.8] {$(0,0)$};
\draw[right] (4,3) node[scale=.8] {$(4,3)$};
\crossingdot{2}{2}
\end{scope}

\end{tikzpicture}
\caption{An example of the bijection $\ga_0$. For each pair of two-rowed arrays $\cdef$, the leftmost entry where they differ is $d_2=3>2=f_2$, so $(c_1,f_2)=(2,2)$ satisfies condition (i$^\uw$). The corresponding vertex in the pair of paths $(\Tr(\cd),\Tr(\ef))$ has been marked with a dotted circle.}
\label{fig:0uw}
\end{figure}

\begin{lemma}\label{lem:ga,de}
Fix $n\ge0$, $k\in\Z$ and $r\ge1$.
Suppose that either $B_1\prec B_2$ and $k\ge0$, or that $B_1=B_2$.
The map $\ga_r$ restricts to a bijection
\begin{equation}\label{eq:ga}
\ptra{(x_1,u_2]}{[y_1,v_2)}{(x_2,u_1]}{[y_2,v_1)}_{n,k}^{\ge r\uw}\overset{\ga_r}{\longleftrightarrow}
\ptra{(x_1,u_1]}{[y_1,v_1)}{(x_2,u_2]}{[y_2,v_2)}_{n,-k-1}^{\ge r\uw}.
\end{equation}
The map $\de_r$ restricts to a bijection
\begin{equation}\label{eq:de}
\ptra{(x_1,u_1]}{[y_1,v_1)}{(x_2,u_2]}{[y_2,v_2)}_{n,-k}^{\ge r\dw}\overset{\de_r}{\longleftrightarrow}
\ptra{(x_1,u_2]}{[y_1,v_2)}{(x_2,u_1]}{[y_2,v_1)}_{n,k+1}^{\ge r\dw}.
\end{equation}
Both $\ga_r$ and $\de_r$ preserve the sum of the entries of the pair of arrays.

Additionally, if $A_1=A_2$ and $B_1=B_2$, then the above statements also hold for $r=0$.
\end{lemma}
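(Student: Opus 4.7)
The plan is to verify, for each of $\ga_r$ and $\de_r$, four properties: well-definedness (i.e.\ properness of the $r$th crossing, so that the tail swap is actually carried out), membership of the image in the asserted target set, involutivity, and preservation of the sum of the entries. Involutivity and sum preservation are essentially immediate from the construction of $\ga_r$ and $\de_r$ given before the lemma: the swap fixes the $r$th crossing and leaves all entries to its left unchanged, so reapplying the map returns the original pair; and the map merely rearranges entries between the two arrays without altering values. Using the relation $\de_r = \swap\circ\ga_r\circ\swap$, with $\swap$ as in~\eqref{eq:swap}, all remaining verifications for $\de_r$ reduce to those for $\ga_r$.

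For well-definedness of $\ga_r$ on $\ptra{(x_1,u_2]}{[y_1,v_2)}{(x_2,u_1]}{[y_2,v_1)}_{n,k}^{\ge r\uw}$, I would show the $r$th crossing $(c_i,f_j)$ satisfies $c_i \neq u_2$ and $f_j \neq v_1$. The latter is automatic, since $f_j$ lies in a row with strict upper bound $v_1$. For the former, condition (i$^\uw$) gives $c_i < e_j$, and $e_j \le u_1$ (either as a genuine entry of $\ef$'s top row, or via the convention $e_{n_2+1} = u_1$ in the borderline case $j = n_2+1$, which is only possible when $k=0$). The hypothesis then yields $u_1 \le u_2$ (trivially if $B_1=B_2$, and via $B_1\prec B_2$ otherwise), so $c_i < u_2$ as required.

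For target-set membership, a direct entry count shows that the new first array has top length $i + n_2 - j + 1$ and bottom length $i + n_2 - k - j$, yielding difference $-k-1$, while the new second array has complementary dimensions; thus the image lies in $\ptra{(x_1,u_1]}{[y_1,v_1)}{(x_2,u_2]}{[y_2,v_2)}_{n,-k-1}$, with the upper bound of each row inherited from the row whose tail it absorbed. Strict increase of the rows follows from condition (i$^\uw$) together with $c_i < c_{i+1}$ and $f_j < f_{j+1}$, both available by properness. The $r$th crossing of $\ga_r(\cdef)$ is still at $(c_i,f_j)$: condition (i$^\uw$) holds in the new pair from the bounds just mentioned, and condition (ii$^\uw$), which only inspects the portion of the arrays to the left of the crossing, is untouched by $\ga_r$.

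The main obstacle is the $r=0$ extension for $A_1=A_2=(x,y)$, $B_1=B_2=(u,v)$, where $\ga_0$ is anchored by the leftmost zig-zag disagreement between $\cd$ and $\ef$ rather than by an actual crossing. In the subcase $d_i > f_i$ (bottom-row disagreement), the anchor vertex $(c_{i-1},f_i)$ requires $c_{i-1} < u$; this could fail only if $c_{i-1}=u$, which forces $e_{i-1}=c_{i-1}=u$, whereupon the existence of $f_i$ as an entry of $\ef$'s bottom row beyond index $i-1$ would require $e_i$ (or its convention value $u$) to exceed $u$, a contradiction. Once properness is confirmed in both subcases, the same bookkeeping used for $r\ge 1$ delivers the correct image row lengths, and the swap produces a new leftmost disagreement of $\uw$ type at the same zig-zag index (since the swap introduces $c_i < c_{i+1}$ or the analogous bottom inequality at that position), placing the image in $\ptra{(x,u]}{[y,v)}{(x,u]}{[y,v)}_{n,-k-1}^{\ge 0\uw}$ as required.
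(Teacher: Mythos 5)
Your outline (properness of the $r$th crossing, row-length bookkeeping for the image, involutivity, $\de_r=\swap\circ\ga_r\circ\swap$) follows the same route as the paper, but there are concrete gaps. The most serious one is structural: you only verify that $\ga_r$ is defined on the left-hand set of \eqref{eq:ga} and maps it into the right-hand set. Involutivity then gives an injection, not a bijection; you must also show that every element of $\ptra{(x_1,u_1]}{[y_1,v_1)}{(x_2,u_2]}{[y_2,v_2)}_{n,-k-1}^{\ge r\uw}$ has a \emph{proper} $r$th crossing (so that $\ga_r$ is defined there) and is sent back into the left-hand set. This reverse direction is exactly where the hypothesis $k\ge0$ is needed when $B_1\prec B_2$: on that side the inequalities $u_1<u_2$ and $v_2<v_1$ no longer rule out improper crossings by themselves, and the point (as in the paper's proof) is that an improper upward crossing can only occur at a convention entry, which forces the bottom-minus-top length difference of the relevant array to be nonnegative, whereas here it equals $-k-1<0$. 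Your proposal never invokes $k\ge0$ anywhere, which is a clear sign that this half is missing. Relatedly, your claim that $f_j\neq v_1$ is ``automatic, since $f_j$ lies in a row with strict upper bound $v_1$'' is not a valid justification even on the left-hand side: the definition of an upward crossing allows $j=m_2+1$, where $f_{m_2+1}:=v_1$ by convention, and when $k\ge0$ this index is in range. Excluding it requires condition (i$^\uw$), which gives $v_1=f_j<d_{i+1}\le v_2$ and contradicts $v_2\le v_1$ coming from $B_1\prec B_2$ or $B_1=B_2$ --- i.e., the same style of argument you correctly used for $c_i\neq u_2$, not strictness of the row bound.

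The $r=0$ case also contains a non sequitur. In the bottom-row-disagreement subcase you argue that $c_{i-1}=u$ would force $e_i$ (or its convention value) ``to exceed $u$'' because $f_i$ exists; but a two-rowed array has no cross-row inequalities, so the existence of a bottom-row entry of $\ef$ imposes no constraint at all on its top row, and the asserted contradiction does not follow. A correct treatment must rule out $c_{i-1}=u$ and $f_i=v$ (and handle the top-row-disagreement subcase, which you omit) using the equalities $e_j=c_j$, $f_j=d_j$ for indices before the first disagreement together with the strict bounds on actual entries, and must also verify, as you only assert, that the image lies in the $\ge 0\uw$ subset with parameter $-k-1$. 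Finally, if you obtain \eqref{eq:de} by conjugating with $\swap$, note that $\swap$ interchanges the roles of $A_1$ and $A_2$ as well as of $\uw$ and $\dw$, so what you need is the $\ga_r$-statement with the anchors interchanged; this is harmless because the proof of \eqref{eq:ga} uses no relation between $A_1$ and $A_2$, but it should be stated (the paper instead verifies the $\de_r$ case directly by the parallel improper-crossing and row-length analysis).
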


\begin{proof} 
Suppose first that $r\ge1$. Let us first check that pairs of arrays in the four sets above cannot have improper crossings, and so the maps $\ga_r$ and $\de_r$ are defined.
For $\cdef\in\ptra{(x_1,u_\ci]}{[y_1,v_\ci)}{(x_2,u_\bu]}{[y_2,v_\bu)}_{n,h}$, where $h\in\Z$, to have an improper upward crossing at $(c_i,f_j)$, we must have either $c_i=u_\ci$, in which case $u_\ci=c_i<e_j\le u_\bu$ and $h\ge0$, or $f_j=v_\bu$, in which case $v_\bu=f_j<d_{i+1}\le v_\ci$ and $h\ge0$.
Similarly, for $\cdef$ to have an improper downward crossing at $(e_j,d_i)$, we must have either $e_j=u_\bu$, in which case $u_\bu=e_j<c_i\le u_\ci$ and $h\le0$, or $d_i=v_\ci$, in which case $v_\ci=d_i<f_{j+1}\le v_\bu$ and $h\le0$. 

If $\cdef$ is in the left-hand side of~\eqref{eq:ga} (resp.~\eqref{eq:de}) and has an improper upward (resp.\ downward) crossing, the previous paragraph forces $u_2<u_1$ or $v_2<v_1$, contradicting the hypothesis that $B_1\prec B_2$ or $B_1=B_2$.
If $\cdef$ is in the right-hand side instead,
the forced inequalities $u_1<u_2$ or $v_1<v_2$ hold when $B_1\prec B_2$, but the requirement on $h$ states that $k+1\le0$, contradicting the hypothesis that $k\ge0$ in this case.

Having already seen that $\ga_r$ and $\de_r$ are involutions preserving the first $r$ crossings and preserving the sum of the entries, it remains to describe their images when restricted to the above sets.
Let $\cdef$ be in one of the sets in~\eqref{eq:ga}, so that $\cd\in\tra{(x_1,u_\ci]_{n_1}}{[y_1,v_\ci)_{n_1+h}}$ and $\ef\in\tra{(x_2,u_\bu]_{n_2}}{[y_2,v_\bu)_{n_2-h}}$ for some $n_1,n_2$ summing to $n$, and $h\in\Z$. If the $r$th crossing of $\cdef$ is an upward crossing at $(c_i,f_j)$, then 
$$\ga_r(\cdef)\in\tra{(x_1,u_\bu]_{n_2+i-j+1}}{[y_1,v_\bu)_{n_2-h+i-j}}\times 
\tra{(x_2,u_\ci]_{n_1-i+j-1}}{[y_2,v_\ci)_{n_1+h-i+j}}\subseteq \ptra{(x_1,u_\bu]}{[y_1,v_\bu)}{(x_2,u_\ci]}{[y_2,v_\ci)}_{n,-h-1}.$$
When $h\in\{k,-k-1\}$, then $-h-1$ equals the other element in the set, so this argument works in both directions.

Similarly, if $\cdef$ is in one of the sets in~\eqref{eq:de} and its $r$th crossing is a downward crossing at $(e_j,d_i)$, then 
$$\de_r(\cdef)\in\tra{(x_1,u_\bu]_{n_2+i-j-1}}{[y_1,v_\bu)_{n_2-h+i-j}}\times 
\tra{(x_2,u_\ci]_{n_1-i+j+1}}{[y_2,v_\ci)_{n_1+h-i+j}}\subseteq\ptra{(x_1,u_\bu]}{[y_1,v_\bu)}{(x_2,u_\ci]}{[y_2,v_\ci)}_{n,-h+1}.$$
When $h\in\{-k,k+1\}$, then $-h+1$ equals the other element in the set.

Finally, in the case that $A_1=A_2$ and $B_1=B_2$, a similar argument shows that the maps $\ga_0$ and $\de_0$ are defined and they are bijections between the stated sets.
\end{proof}

\subsection{Proof of Theorem~\ref{thm:pairs:des}}

The proof is divided into four cases according to which endpoints of the paths coincide.
In each case, we determine 
$H^{\ge r}_{A_1\to B_\ci,A_2\to B_\bu}(t,q)$ by first using Equation~\eqref{eq:Hto2ra} to write it as a sum over pairs of two-rowed arrays.
Then we repeatedly apply the maps from Lemma~\ref{lem:ga,de} to construct bijections between $\ptra{(x_1,u_\ci]}{[y_1,v_\ci)}{(x_2,u_\bu]}{[y_2,v_\bu)}_{n}^{\ge r}$ and certain sets of pairs of two-rowed arrays with no requirement on the number of crossings, and finally we use Lemma~\ref{lem:sum-pairs} to obtain the desired expressions. Again, the cases are labeled as in~\cite{part1} for consistency.

\begin{list1}

\item {\bf Case 1:} endpoints $A_1\prec A_2$ and $B_1\prec B_2$.
If $P\in\P_{A_1\to B_2}$ and $Q\in\P_{A_2\to B_1}$, the relative position of the endpoints forces $\cro(P,Q)$ to be odd,
which proves the first equality in Equation~\eqref{eq:switched:des}.
Using Lemmas~\ref{lem:even=odd-pairs} and~\ref{lem:ga,de}, we construct a sequence of bijections $\de_1\circ\ga_2\circ\dots\circ\de_{2m-1}\circ\ga_{2m}$:
\begin{align}\nonumber\ptra{(x_1,u_2]}{[y_1,v_2)}{(x_2,u_1]}{[y_2,v_1)}_{n}^{\ge 2m}
=\ptra{(x_1,u_2]}{[y_1,v_2)}{(x_2,u_1]}{[y_2,v_1)}_{n,0}^{\ge 2m\uw}
\overset{\ga_{2m}}{\longrightarrow}
\ptra{(x_1,u_1]}{[y_1,v_1)}{(x_2,u_2]}{[y_2,v_2)}_{n,-1}^{\ge 2m\uw}
=\ptra{(x_1,u_1]}{[y_1,v_1)}{(x_2,u_2]}{[y_2,v_2)}_{n,-1}^{\ge 2m-1\dw}& \\ \nonumber
\overset{\de_{2m-1}}{\longrightarrow}
\ptra{(x_1,u_2]}{[y_1,v_2)}{(x_2,u_1]}{[y_2,v_1)}_{n,2}^{\ge 2m-1\dw}
=\ptra{(x_1,u_2]}{[y_1,v_2)}{(x_2,u_1]}{[y_2,v_1)}_{n,2}^{\ge 2m-2\uw}&\\ \label{eq:for_fig1}
\overset{\ga_{2m-2}}{\longrightarrow}\cdots\overset{\de_{1}}{\longrightarrow}
\ptra{(x_1,u_2]}{[y_1,v_2)}{(x_2,u_1]}{[y_2,v_1)}_{n,2m}^{\ge 1\dw}
=\ptra{(x_1,u_2]}{[y_1,v_2)}{(x_2,u_1]}{[y_2,v_1)}_{n,2m}^{\ge 0\uw}
=\ptra{(x_1,u_2]}{[y_1,v_2)}{(x_2,u_1]}{[y_2,v_1)}_{n,2m},&
\end{align}
where the last equality comes from~\eqref{eq:convention_r0-pairs}.
See Figure~\ref{fig:Case1} for an example.
Since these bijections preserve the sum of the entries of the arrays, Equation~\eqref{eq:Hto2ra} and Lemma~\ref{lem:sum-pairs} give
$$H^{\ge 2m}_{A_1\to B_2,A_2\to B_1}(t,q)=\sum_{n\ge0}t^n\sum_{\cdef\in\ptra{(x_1,u_2]}{[y_1,v_2)}{(x_2,u_1]}{[y_2,v_1)}_{n,2m}} q^{\su\bfc+\su\bfd+\su\bfe+\su\bff-nz}=f_{2m,A_1,A_2,B_2,B_1}(t,q),
$$
proving Equation~\eqref{eq:switched:des}.

\begin{figure}[htb]
\centering\medskip
$\begin{array}{ccc}
\ptra{(0,10]}{[2,7)}{(2,8]}{[0,8)}_{6}^{\ge2}=\ptra{(0,10]}{[2,7)}{(2,8]}{[0,8)}_{6,0}^{\ge2\uw}&
\overset{\ga_{2}}{\longrightarrow}&
\ptra{(0,8]}{[2,8)}{(2,10]}{[0,7)}_{6,-1}^{\ge2\uw}=\ptra{(0,8]}{[2,8)}{(2,10]}{[0,7)}_{6,-1}^{\ge1\dw}\medskip\\
\begin{tikzpicture}[scale=0.7]
\dwcrossingc{2}{olive}
\uwcrossingfillc{2}{violet}
\dwcrossingc{3 }{orange}
\drawarray{{0,3,6,10}}{{2,2,4,7}}
\ineq{3}{3}
\draw (4,-.2)--(4,1);
\begin{scope}[shift={(4.5,0)}]
\uwcrossingc{1}{olive}
\dwcrossingfillc{3}{violet}
\uwcrossingc{4}{orange}
\drawarray{{2,3,4,7,8,8}}{{0,2,5,6,7,8}}
\ineq{5}{5}
\end{scope}
\end{tikzpicture}
&&
\begin{tikzpicture}[scale=0.7]
\dwcrossingfillc{2}{olive}
\uwcrossingc{2}{violet}
\drawarray{{0,3,6,7,8,8}}{{2,2,4,7,8}}
\ineq{5}{4}
\draw (6,-.2)--(6,1);
\begin{scope}[shift={(6.5,0)}]
\uwcrossingfillc{1}{olive}
\dwcrossingc{3}{violet}
\drawarray{{2,3,4,10}}{{0,2,5,6,7}}
\ineq{3}{4}
\end{scope}
\end{tikzpicture}\medskip\\
&\overset{\de_{1}}{\longrightarrow}&
\ptra{(0,10]}{[2,7)}{(2,8]}{[0,8)}_{6,2}^{\ge1\dw}=\ptra{(0,10]}{[2,7)}{(2,8]}{[0,8)}_{6,2}\medskip\\
&&
\begin{tikzpicture}[scale=0.7]
\dwcrossingc{2}{olive}
\drawarray{{0,3,4,10}}{{2,2,4,5,6,7}}
\ineq{3}{5}
\draw (5.5,-.2)--(5.5,1);
\begin{scope}[shift={(6,0)}]
\uwcrossingc{1}{olive}
\drawarray{{2,3,6,7,8,8}}{{0,2,7,8}}
\ineq{5}{3}
\end{scope}
\end{tikzpicture}
\end{array}$
\caption{An example of the bijection~\eqref{eq:for_fig1}, where $m=1$ and $n=6$.}
\label{fig:Case1}
\end{figure}

Similarly, if $P\in\P_{A_1\to B_1}$ and $Q\in\P_{A_2\to B_2}$, then $\cro(P,Q)$ must be even,
which proves the first equality in Equation~\eqref{eq:same:des}.
In this case, we construct a sequence of bijections 
 $\de_1\circ\ga_2\circ\dots\circ\de_{2m+1}$:
\begin{align*}\ptra{(x_1,u_1]}{[y_1,v_1)}{(x_2,u_2]}{[y_2,v_2)}_{n}^{\ge 2m+1}
=\ptra{(x_1,u_1]}{[y_1,v_1)}{(x_2,u_2]}{[y_2,v_2)}_{n,0}^{\ge 2m+1\dw}
\overset{\de_{2m+1}}{\longrightarrow}
\ptra{(x_1,u_2]}{[y_1,v_2)}{(x_2,u_1]}{[y_2,v_1)}_{n,1}^{\ge 2m+1\dw}
=\ptra{(x_1,u_2]}{[y_1,v_2)}{(x_2,u_1]}{[y_2,v_1)}_{n,1}^{\ge 2m\uw}& \\
\overset{\ga_{2m}}{\longrightarrow}
\ptra{(x_1,u_1]}{[y_1,v_1)}{(x_2,u_2]}{[y_2,v_2)}_{n,-2}^{\ge 2m\uw}
=\ptra{(x_1,u_1]}{[y_1,v_1)}{(x_2,u_2]}{[y_2,v_2)}_{n,-2}^{\ge 2m-1\dw}& \\
\overset{\de_{2m-1}}{\longrightarrow}\cdots\overset{\de_{1}}{\longrightarrow}
\ptra{(x_1,u_2]}{[y_1,v_2)}{(x_2,u_1]}{[y_2,v_1)}_{n,2m+1}^{\ge 1\dw}
=\ptra{(x_1,u_2]}{[y_1,v_2)}{(x_2,u_1]}{[y_2,v_1)}_{n,2m+1}.&
\end{align*}

Equation~\eqref{eq:Hto2ra} and Lemma~\ref{lem:sum-pairs} now give
$$H^{\ge 2m+1}_{A_1\to B_1,A_2\to B_2}(t,q)=\sum_{n\ge0}t^n\sum_{\cdef\in\ptra{(x_1,u_2]}{[y_1,v_2)}{(x_2,u_1]}{[y_2,v_1)}_{n,2m+1}} q^{\su\bfc+\su\bfd+\su\bfe+\su\bff-nz}=f_{2m+1,A_1,A_2,B_2,B_1}(t,q),
$$
proving Equation~\eqref{eq:same:des}.

\item {\bf Case 3:} endpoints $A_1\prec A_2$ and $B$.
If $P\in\P_{A_1\to B}$ and $Q\in\P_{A_2\to B}$, the parity of $\cro(P,Q)$ is no longer forced by the endpoints, so we consider two cases.
When $r=2m$ for some $m\ge1$, the $r$th crossing is an upward crossing by Lemma~\ref{lem:first_crossing-pairs}, and Lemmas~\ref{lem:even=odd-pairs} and~\ref{lem:ga,de} give a sequence of bijections $\de_1\circ\ga_2\circ\dots\circ\de_{2m-1}\circ\ga_{2m}$:
\begin{align}\nonumber
\ptra{(x_1,u]}{[y_1,v)}{(x_2,u]}{[y_2,v)}_{n}^{\ge 2m}=\ptra{(x_1,u]}{[y_1,v)}{(x_2,u]}{[y_2,v)}_{n,0}^{\ge 2m\uw}
\overset{\ga_{2m}}{\longrightarrow}&\
\ptra{(x_1,u]}{[y_1,v)}{(x_2,u]}{[y_2,v)}_{n,-1}^{\ge 2m\uw}
=\ptra{(x_1,u]}{[y_1,v)}{(x_2,u]}{[y_2,v)}_{n,-1}^{\ge 2m-1\dw}\\ \nonumber
\overset{\de_{2m-1}}{\longrightarrow}&\
\ptra{(x_1,u]}{[y_1,v)}{(x_2,u]}{[y_2,v)}_{n,2}^{\ge 2m-1\dw}
=\ptra{(x_1,u]}{[y_1,v)}{(x_2,u]}{[y_2,v)}_{n,2}^{\ge 2m-2\uw}\\ \label{eq:even:B1=B2}
\overset{\ga_{2m-2}}{\longrightarrow}\cdots\overset{\de_{1}}{\longrightarrow}&\
\ptra{(x_1,u]}{[y_1,v)}{(x_2,u]}{[y_2,v)}_{n,2m}^{\ge 1\dw}
=\ptra{(x_1,u]}{[y_1,v)}{(x_2,u]}{[y_2,v)}_{n,2m}^{\ge 0\uw}
=\ptra{(x_1,u]}{[y_1,v)}{(x_2,u]}{[y_2,v)}_{n,2m},
\end{align}
using again~\eqref{eq:convention_r0-pairs}.
Equation~\eqref{eq:Hto2ra} and Lemma~\ref{lem:sum-pairs} give
$$H^{\ge 2m}_{A_1\to B,A_2\to B}(t,q)=\sum_{n\ge0}t^n\sum_{\cdef\in\ptra{(x_1,u]}{[y_1,v)}{(x_2,u]}{[y_2,v)}_{n,2m}} q^{\su\bfc+\su\bfd+\su\bfe+\su\bff-nz}=f_{2m,A_1,A_2,B,B}(t,q),
$$
proving Equation~\eqref{eq:B1=B2:des} for even $r$.

When $r=2m+1$ for some $m\ge0$, the $r$th crossing is a downward crossing by Lemma~\ref{lem:first_crossing-pairs}, and we get a sequence of bijections $\de_1\circ\ga_2\circ\dots\circ\de_{2m+1}$:
\begin{align}\nonumber
\ptra{(x_1,u]}{[y_1,v)}{(x_2,u]}{[y_2,v)}_{n}^{\ge 2m+1}
=\ptra{(x_1,u]}{[y_1,v)}{(x_2,u]}{[y_2,v)}_{n,0}^{\ge 2m+1\dw}
\overset{\de_{2m+1}}{\longrightarrow}&\
\ptra{(x_1,u]}{[y_1,v)}{(x_2,u]}{[y_2,v)}_{n,1}^{\ge 2m+1\dw}
=\ptra{(x_1,u]}{[y_1,v)}{(x_2,u]}{[y_2,v)}_{n,1}^{\ge 2m\uw}\\ \nonumber
\overset{\ga_{2m}}{\longrightarrow}&\
\ptra{(x_1,u]}{[y_1,v)}{(x_2,u]}{[y_2,v)}_{n,-2}^{\ge 2m\uw}
=\ptra{(x_1,u]}{[y_1,v)}{(x_2,u]}{[y_2,v)}_{n,-2}^{\ge 2m-1\dw}\\  \label{eq:odd:B1=B2}
\overset{\de_{2m-1}}{\longrightarrow}\cdots\overset{\de_{1}}{\longrightarrow}&\
\ptra{(x_1,u]}{[y_1,v)}{(x_2,u]}{[y_2,v)}_{n,2m+1}^{\ge 1\dw}
=\ptra{(x_1,u]}{[y_1,v)}{(x_2,u]}{[y_2,v)}_{n,2m+1},
\end{align}
from where
$$H^{\ge 2m+1}_{A_1\to B_1,A_2\to B_2}(t,q)=\sum_{n\ge0}t^n\sum_{\cdef\in\ptra{(x_1,u}{[y_1,v)}{(x_2,u]}{[y_2,v)}_{n,2m+1}} q^{\su\bfc+\su\bfd+\su\bfe+\su\bff-nz}=f_{2m+1,A_1,A_2,B,B}(t,q),
$$
proving Equation~\eqref{eq:B1=B2:des} for odd $r$.

\item {\bf Case 2:} endpoints $A$ and $B_1\prec B_2$.
We will reduce this case to Case~3 by applying the involution $\nu$, defined above Equation~\eqref{def:nu}, componentwise to each of the two-rowed arrays in a pair. With some abuse of notation, we also denote this map on pairs of two-rowed arrays by $\nu$. It restricts to a bijection
$$\ptra{(x,u_1]}{[y,v_1)}{(x,u_2]}{[y,v_2)}_{n,k}\overset{\nu}{\longleftrightarrow}\ptra{(-v_1,-y]}{[-u_1,-x)}{(-v_2,-y]}{[-u_2,-x)}_{n,-k}$$
for any $k\in\Z$. In the case $k=0$, translating $\nu$ into a map on pairs of paths via the encoding~\eqref{bij:PQto2ra} yields the involution that reflects each path along the line $x+y=0$. In particular, it preserves the number of crossings, so it restricts to a bijection
\begin{equation}\label{eq:nu-pairs}\ptra{(x,u_1]}{[y,v_1)}{(x,u_2]}{[y,v_2)}_{n}^{\ge r}\overset{\nu}{\longleftrightarrow}\ptra{(-v_1,-y]}{[-u_1,-x)}{(-v_2,-y]}{[-u_2,-x)}_{n}^{\ge r}.\end{equation}

The hypothesis $(u_1,v_1)\prec (u_2,v_2)$ implies that the initial points of the reflected paths satisfy $(-v_1,-u_1)\prec (-v_2,-u_2)$, whereas the final point is the same for both paths, namely $(-y,-x)$. This allows us to apply Case~3.

When $r=2m$, Equation~\eqref{eq:even:B1=B2} gives a bijection
$$\de_1\circ\ga_2\circ\dots\circ\de_{2m-1}\circ\ga_{2m}:\ptra{(-v_1,-y]}{[-u_1,-x)}{(-v_2,-y]}{[-u_2,-x)}_{n}^{\ge 2m}\longrightarrow
\ptra{(-v_1,-y]}{[-u_1,-x)}{(-v_2,-y]}{[-u_2,-x)}_{n,2m}.$$
Conjugating by $\nu$ and composing with the map $\swap$ from Equation~\eqref{eq:swap} yields a bijection
$$\swap\circ\nu\circ\de_1\circ\ga_2\circ\dots\circ\de_{2m-1}\circ\ga_{2m}\circ\nu:
\ptra{(x,u_1]}{[y,v_1)}{(x,u_2]}{[y,v_2)}_{n}^{\ge 2m} 
\longrightarrow\ptra{(x,u_2]}{[y,v_2)}{(x,u_1]}{[y,v_1)}_{n,2m}$$
that preserves the sum of the entries. Similarly, when $r=2m+1$, conjugating the bijection~\eqref{eq:odd:B1=B2} with $\nu$ and composing with $\swap$ produces a bijection
$$\swap\circ\nu\circ\de_1\circ\ga_2\circ\dots\circ\de_{2m+1}\circ\nu:\ptra{(x,u_1]}{[y,v_1)}{(x,u_2]}{[y,v_2)}_{n}^{\ge 2m+1}\longrightarrow
\ptra{(x,u_2]}{[y,v_2)}{(x,u_1]}{[y,v_1)}_{n,2m+1}.$$

In both cases, using Equation~\eqref{eq:Hto2ra} and Lemma~\ref{lem:sum-pairs}, we get
$$H^{\ge r}_{A\to B_1,A\to B_2}(t,q)=\sum_{n\ge0}t^n\sum_{\cdef\in\ptra{(x,u_1]}{[y,v_1)}{(x,u_2]}{[y,v_2)}_{n,r}} q^{\su\bfc+\su\bfd+\su\bfe+\su\bff-nz}=f_{r,A,A,B_2,B_1}(t,q),$$
proving Equation~\eqref{eq:A1=A2:des}.

\item {\bf Case 4:} endpoints $A$ and $B$. The map $\swap$ from Equation~\eqref{eq:swap} restricts to a bijection 
$$\ptra{(x,u]}{[y,v)}{(x,u]}{[y,v)}_{n,k}^{\ge r\uw}\overset{\swap}{\longleftrightarrow}\ptra{(x,u]}{[y,v)}{(x,u]}{[y,v)}_{n,-k}^{\ge r\dw}$$ for any $r\ge0$ and $k\in\Z$.
For $r\ge1$, we also have 
$$\ptra{(x,u]}{[y,v)}{(x,u]}{[y,v)}_{n}^{\ge r}=\ptra{(x,u]}{[y,v)}{(x,u]}{[y,v)}_{n}^{\ge r\uw}\sqcup\ptra{(x,u]}{[y,v)}{(x,u]}{[y,v)}_{n}^{\ge r\dw},$$
and so Equation~\eqref{eq:Hto2ra} gives
\begin{align}\label{eq:Hto2raABuw} H^{\ge r}_{A\to B,A\to B}(t,q)&=2\sum_{n\ge0}\,t^n\sum_{\cdef\in\ptra{(x,u]}{[y,v)}{(x,u]}{[y,v)}_{n}^{\ge r\uw}} q^{\su\bfc+\su\bfd+\su\bfe+\su\bff-nz}\\ \label{eq:Hto2raABdw}
&=2\sum_{n\ge0}\,t^n\sum_{\cdef\in\ptra{(x,u]}{[y,v)}{(x,u]}{[y,v)}_{n}^{\ge r\dw}} q^{\su\bfc+\su\bfd+\su\bfe+\su\bff-nz}.
\end{align}

Our next goal is to prove that
\begin{equation}\label{eq:Hrr+1}
H^{\ge r}_{A\to B,A\to B}(t,q)+H^{\ge r+1}_{A\to B,A\to B}(t,q)=2 f_{r+1,A,A,B,B}(t,q)
\end{equation}
for all $r\ge1$.

For arrays with at least $r=2m$ crossings, Lemmas~\ref{lem:even=odd-pairs} and~\ref{lem:ga,de} give bijections
$\de_1\circ\ga_2\circ\dots\circ\de_{2m-1}\circ\ga_{2m}$:
\begin{align}\nonumber
\ptra{(x,u]}{[y,v)}{(x,u]}{[y,v)}_{n}^{\ge 2m\uw}
\overset{\ga_{2m}}{\longrightarrow}&\
\ptra{(x,u]}{[y,v)}{(x,u]}{[y,v)}_{n,-1}^{\ge 2m\uw}
=\ptra{(x,u]}{[y,v)}{(x,u]}{[y,v)}_{n,-1}^{\ge 2m-1\dw}\\ \nonumber
\overset{\de_{2m-1}}{\longrightarrow}&\
\ptra{(x,u]}{[y,v)}{(x,u]}{[y,v)}_{n,2}^{\ge 2m-1\dw}
=\ptra{(x,u]}{[y,v)}{(x,u]}{[y,v)}_{n,2}^{\ge 2m-2\uw}\\ \label{eq:even:A1=A2,B1=B2}
\overset{\ga_{2m-2}}{\longrightarrow}\cdots\overset{\de_{1}}{\longrightarrow}&\
\ptra{(x,u]}{[y,v)}{(x,u]}{[y,v)}_{n,2m}^{\ge 1\dw}
=\ptra{(x,u]}{[y,v)}{(x,u]}{[y,v)}_{n,2m}^{\ge 0\uw}.
\end{align}

Similarly, for arrays with at least $r=2m+1$ crossings, we get bijections $\de_1\circ\ga_2\circ\dots\circ\de_{2m-1}\circ\ga_{2m}\circ\de_{2m+1}$:
\begin{align}\nonumber
\ptra{(x,u]}{[y,v)}{(x,u]}{[y,v)}_{n}^{\ge 2m+1\dw}
\overset{\de_{2m+1}}{\longrightarrow}&\
\ptra{(x,u]}{[y,v)}{(x,u]}{[y,v)}_{n,1}^{\ge 2m+1\dw}
=\ptra{(x,u]}{[y,v)}{(x,u]}{[y,v)}_{n,1}^{\ge 2m\uw}\\ \nonumber
\overset{\ga_{2m}}{\longrightarrow}&\
\ptra{(x,u]}{[y,v)}{(x,u]}{[y,v)}_{n,-2}^{\ge 2m\uw}
=\ptra{(x,u]}{[y,v)}{(x,u]}{[y,v)}_{n,-2}^{\ge 2m-1\dw}\\ \label{eq:odd:A1=A2,B1=B2}
\overset{\de_{2m-1}}{\longrightarrow}\cdots\overset{\de_{1}}{\longrightarrow}&\
\ptra{(x,u]}{[y,v)}{(x,u]}{[y,v)}_{n,2m+1}^{\ge 1\dw}
=\ptra{(x,u]}{[y,v)}{(x,u]}{[y,v)}_{n,2m+1}^{\ge 0\uw}.
\end{align}

In both cases, we can compose these bijections with $\swap\circ\ga_0$:
\begin{equation}\label{eq:swapga0}
\ptra{(x,u]}{[y,v)}{(x,u]}{[y,v)}_{n,r}^{\ge 0\uw}\overset{\ga_{0}}{\longrightarrow}
\ptra{(x,u]}{[y,v)}{(x,u]}{[y,v)}_{n,-r-1}^{\ge 0\uw}
\overset{\swap}{\longrightarrow}
\ptra{(x,u]}{[y,v)}{(x,u]}{[y,v)}_{n,r+1}^{\ge 0\dw}.
\end{equation}

Composing~\eqref{eq:even:A1=A2,B1=B2} with~\eqref{eq:swapga0}, where $r=2m$, and using Equation~\eqref{eq:Hto2raABuw}, we get
$$H^{\ge 2m}_{A\to B,A\to B}(t,q)=2\sum_{n\ge0}t^n\sum_{\cdef\in\ptra{(x,u]}{[y,v)}{(x,u]}{[y,v)}^{\ge0\dw}_{n,2m+1}} q^{\su\bfc+\su\bfd+\su\bfe+\su\bff-nz}
$$
for $m\ge1$.
Similarly, the bijection~\eqref{eq:odd:A1=A2,B1=B2} and Equation~\eqref{eq:Hto2raABdw}, where $r=2m+1$, give
$$H^{\ge 2m+1}_{A\to B,A\to B}(t,q)=2\sum_{n\ge0}t^n\sum_{\cdef\in\ptra{(x,u]}{[y,v)}{(x,u]}{[y,v)}^{\ge0\uw}_{n,2m+1}} q^{\su\bfc+\su\bfd+\su\bfe+\su\bff-nz}.
$$
Adding the last two equations, using the fact that 
$$\ptra{(x,u]}{[y,v)}{(x,u]}{[y,v)}_{n,k}^{\ge 0\uw}\sqcup\ptra{(x,u]}{[y,v)}{(x,u]}{[y,v)}_{n,k}^{\ge 0\dw}=
\ptra{(x,u]}{[y,v)}{(x,u]}{[y,v)}_{n,k}$$
for all $k\neq0$, and applying Lemma~\ref{lem:sum-pairs}, we obtain a proof of Equation~\eqref{eq:Hrr+1} for $r=2m$.

On the other hand, composing~\eqref{eq:odd:A1=A2,B1=B2} (with $m-1$ playing the role of $m$)  with~\eqref{eq:swapga0} (with $r=2m-1$) and using Equation~\eqref{eq:Hto2raABdw}, we get 
$$H^{\ge 2m-1}_{A\to B,A\to B}(t,q)=2\sum_{n\ge0}t^n\sum_{\cdef\in\ptra{(x,u]}{[y,v)}{(x,u]}{[y,v)}^{\ge0\dw}_{n,2m}} q^{\su\bfc+\su\bfd+\su\bfe+\su\bff-nz}
$$
for $m\ge1$. Similarly, the bijection~\eqref{eq:even:A1=A2,B1=B2} and Equation~\eqref{eq:Hto2raABuw}, where $r=2m$, give
$$H^{\ge 2m}_{A\to B,A\to B}(t,q)=2\sum_{n\ge0}t^n\sum_{\cdef\in\ptra{(x,u]}{[y,v)}{(x,u]}{[y,v)}^{\ge0\uw}_{n,2m}} q^{\su\bfc+\su\bfd+\su\bfe+\su\bff-nz}.
$$
Adding the last two equations and applying Lemma~\ref{lem:sum-pairs}, we 
obtain a proof of Equation~\eqref{eq:Hrr+1} for $r=2m-1$.

Solving Equation~\eqref{eq:Hrr+1} for $H^{\ge r}_{A\to B,A\to B}(t,q)$ and iterating, we obtain
$$H^{\ge r}_{A\to B,A\to B}(t,q)=2\left(f_{r+1,A,A,B,B}(t,q)-f_{r+2,A,A,B,B}(t,q)+f_{r+3,A,A,B,B}(t,q)-\cdots\right)$$
which proves Equation~\eqref{eq:A1=A2,B1=B2:des} for $r\ge1$. The case $r=0$ follows immediately from 
Equation~\eqref{eq:Hto2ra} and Lemma~\ref{lem:sum-pairs}.
\end{list1}

\subsection*{Acknowledgments}

The author is grateful to Christian Krattenthaler for posing the question of whether the results from~\cite{part1} could be refined by the number of descents.


\begin{thebibliography}{} 

\bibitem{Andrews} G. E. Andrews, {\it The Theory of Partitions}, Encyclopedia of Mathematics and its
Applications, Vol.\ 2, Addison-Wesley, Reading, Mass., 1976.

\bibitem{CES} S. Corteel, S. Elizalde and C. Savage, Partitions with constrained ranks, in preparation.

\bibitem{part1} S. Elizalde, Counting lattice paths by crossings and major index I: the corner-flipping bijections, preprint, \href{https://arxiv.org/abs/2106.09878}{arXiv:2106.09878}.

\bibitem{Eng} O. Engelberg, On some problems concerning a restricted random walk, {\it J. Appl. Probability 2} (1965), 396--404.

\bibitem{Feller} W. Feller, The numbers of zeros and of changes of sign in a symmetric random walk, {\it Enseign. Math.} (2) 3 (1957), 229--235. 

\bibitem{Feller-book} W. Feller, {\it An introduction to probability theory and its applications, Vol. I}, third edition, John Wiley \& Sons, Inc., New York-London-Sydney 1968. 

\bibitem{Fisher} M. E. Fisher, Walks, walls, wetting, and melting, {\it J. Statist. Phys.} 34 (1984), 667--729.


\bibitem{FH} J. F\"urlinger and J. Hofbauer, $q$-Catalan numbers, {\it J. Combin. Theory Ser. A} 40 (1985), 248--264.


\bibitem{GV} I. Gessel and G. Viennot, Binomial determinants, paths, and hook length formulae, {\it Adv. in Math.} 58 (1985), 300--321. 

\bibitem{KW} M. Kern and S. Walter, 
Ballot theorem and lattice path crossings, {\it Canad. J. Statist.} 6 (1978), 87--90. 

\bibitem{Kul} D. M. Kulkarni, Counting of paths and coefficients of Hilbert polynomial of a determinantal ideal, {\it Discrete Math.} 154 (1996), 141--151.

\bibitem{Krat-turns0} C. Krattenthaler, Counting nonintersecting lattice paths with turns, {\it S\'em. Lothar. Combin.} 34 (1995), Art.\ B34i, 17 pp.

\bibitem{Krat} C. Krattenthaler, Lattice path enumeration, {\it Handbook of enumerative combinatorics}, 589--678, Discrete Math. Appl., CRC Press, Boca Raton, FL, 2015. 

\bibitem{Krat-turns} C. Krattenthaler, The enumeration of lattice paths with respect to their number of turns, {\it Advances in combinatorial methods and applications to probability and statistics}, 29--58, Stat. Ind. Technol., Birkh\"auser Boston, Boston, MA, 1997. 

\bibitem{Krat-nonint} C. Krattenthaler, The major counting of nonintersecting lattice paths and generating functions for tableaux, {\it Mem. Amer. Math. Soc.} 115 (1995), no.\ 552, 109 pp.

\bibitem{KM} C. Krattenthaler and S. G. Mohanty, 
On lattice path counting by major index and descents,
{\it European J. Combin.} 14 (1993), 43--51. 

\bibitem{Lin} B. Lindstr\"om, On the vector representations of induced matroids. {\it Bull. London Math. Soc.} 5 (1973), 85--90.

\bibitem{Mac} P.A. MacMahon, {\it Combinatory Analysis}, Cambridge Univ. Press, London,
1915--1916. Reprinted, Chelsea, New York, 1960.

\bibitem{Moh} S. G. Mohanty, {\it Lattice path counting and applications}, Probability and Mathematical Statistics, Academic Press, New York-London-Toronto, 1979.

\bibitem{SaSa}  B. E. Sagan and C. D. Savage, Mahonian pairs, {\it J. Combin. Theory Ser. A} 119 (2012), 526--545.

\bibitem{Sen} K. Sen, On some combinatorial relations concerning the symmetric random walk, {\it 
Magyar Tud. Akad. Mat. Kutat\'o Int. K\"ozl.} 9 (1965), 335--357. 

\bibitem{SeoYee} S. Seo and A. J. Yee, Enumeration of partitions with prescribed successive rank parity blocks, {\it J. Combin. Theory Ser. A} 158 (2018), 12--35. 

\bibitem{Spivey} M. Z. Spivey, Enumerating lattice paths touching or crossing the diagonal at a given number of lattice points, {\it Electron. J. Combin.} 19(3) (2012), \#P24, 6 pp.

\end{thebibliography}
\end{document}